\documentclass[11pt,reqno]{amsart}
\setlength{\textheight}{23cm}
\setlength{\textwidth}{16cm}
\setlength{\topmargin}{-0.8cm}
\setlength{\parskip}{0.3\baselineskip}
\hoffset=-1.4cm

\usepackage{diagrams}
\usepackage{amssymb}
\usepackage[pagewise]{lineno}
\numberwithin{equation}{section}
\usepackage{cite}

\theoremstyle{definition}
\newtheorem{thm}{Theorem}[section]
\newtheorem{cor}[thm]{Corollary}

\newtheorem{prop}[thm]{Proposition}

\newtheorem{rem}[thm]{Remark}
\newtheorem{note}[thm]{Notation}

\newtheorem*{conj}{Conjecture}

\DeclareMathOperator{\Hc}{\mathcal{H}om}

\DeclareMathOperator{\Ima}{\mathrm{Im}}

\DeclareMathOperator{\p3}{\mathbb{P}^3}

\DeclareMathOperator{\pr}{\mathrm{pr}}

\DeclareMathOperator{\mo}{\mathcal{O}}

\newcommand{\mr}[1]{\mathrm{#1}}
\newcommand{\mb}[1]{\mathbb{#1}}
\newcommand{\mbf}[1]{\mathbf{#1}}
\newcommand{\mc}[1]{\mathcal{#1}}
\newcommand{\ov}[1]{\overline{#1}}

\newcommand{\mf}[1]{\mathfrak{#1}}


\begin{document}

\title[A conjecture of Mumford]{Generalization of a conjecture of Mumford}

\author[A. Dan]{Ananyo Dan}

\address{School of Mathematics and Statistics, University of Sheffield, Hicks building, Hounsfield Road, S3 7RH, UK}

\email{a.dan@sheffield.ac.uk}

\author[I. Kaur]{Inder Kaur}

\address{Pontifícia Universidade Cat\'{o}lica do Rio de Janeiro (PUC-Rio), R. Marqu\^{e}s de S\~{a}o Vicente, 225 - G\'{a}vea, Rio de Janeiro - RJ, 22451-900, Brazil}

\email{inder@mat.puc-rio.br}

\subjclass[2010]{Primary: $32$G$20$,  $32$S$35$, $14$D$07$,  $14$D$22$, $14$D$20$, Secondary: $14$H$60$, $55$R$40$}

\keywords{Mumford's conjecture, Hodge-Poincar\'{e} polynomial, cohomology ring, nodal curves, moduli spaces of semi-stable sheaves, 
limit mixed Hodge structures}

\date{\today}

\begin{abstract}
 A conjecture of Mumford predicts a complete set of relations between the generators of the cohomology ring of the moduli space 
 of rank $2$ semi-stable sheaves with fixed odd degree determinant on a smooth, projective curve of genus at least $2$.
The conjecture was proven by Kirwan in $1992$. In this article, we generalize the conjecture  
to the case when the underlying curve is irreducible, nodal. In fact, we show that these relations (in the nodal curve case)
arise naturally as degeneration of the Mumford relations shown by Kirwan in the smooth curve case.
As a byproduct, we compute the Hodge-Poincar\'{e} polynomial 
of the moduli space of rank $2$, semi-stable, torsion-free sheaves with fixed determinant on an irreducible, nodal curve.
\end{abstract}

\maketitle

\section{Introduction}
The underlying field will always be $\mb{C}$.
Let $C$ be a smooth, projective curve of genus $g \geq 2$, $d$ be an odd integer and $\mc{L}_0$ be an invertible sheaf on $C$ of degree $d$.
Denote by $M_C(2,d)$ the moduli space of stable, locally-free sheaves of rank $2$ and degree $d$ over $C$
and by $M_C(2, \mc{L}_0)$ the sub-moduli space of $M_C(2,d)$ parameterizing 
locally-free sheaves  with determinant $\mc{L}_0$.
Since it was first constructed by Mumford, almost all aspects of the moduli spaces $M_C(2,d)$ and $M_C(2,\mc{L}_0)$ have been extensively studied.
Several different methods ranging from topology, number-theory, gauge theory as well as algebraic geometry 
have been used to study the cohomology ring $H^{*}(M_C(2,\mc{L}_0), \mb{Q})$. The generators of this 
ring were given by Newstead in \cite{new1} and a complete set of relations between these generators was conjectured by Mumford. 
We briefly recall the conjecture. 
Choose a symplectic basis $e_1, e_2, ..., e_{2g}$ of $H^1(C,\mb{Z})$ such that 
$e_i \cup e_j=0$ for $|j-i| \not= g$ and $e_i \cup e_{i+g}=-[C]^\vee$, where $[C]^\vee$ is the 
Poincar\'{e} dual of the fundamental class of $C$. Mumford and Newstead \cite{mumn} showed
that there exists an isomorphism of pure Hodge structures
\[\phi: H^1(C,\mb{Z}) \to H^3(M_C(2,\mc{L}_0), \mb{Z}),\]
induced by the second Chern class of the universal vector bundle $\mc{U}$ over $C \times M_C(2,\mc{L}_0)$.
Denote by $\gamma_i:=\phi(e_i)$ for $1 \le i \le 2g$ and $\gamma=\sum_{i=1}^g \gamma_i \gamma_{i+g}$. Newstead in \cite{new1}
showed that there exists $\alpha \in H^2(M_C(2,\mc{L}_0), \mb{Z})$ 
and $\beta \in H^4(M_C(2, \mc{L}_0), \mb{Z})$ (again arising from Chern classes of $\mc{U}$) such that the 
cohomology ring $H^*(M_C(2,\mc{L}_0), \mb{Q})$ is generated by 
$\alpha, \beta$ and $\gamma_i$ for $1 \le i \le 2g$.
Mumford conjectured that there is a decomposition
\[H^*(M_C(2, \mc{L}_0), \mb{Q}) \cong \bigoplus\limits_{k=0}^g P_k \otimes \mb{Q}[\alpha, \beta, \gamma]/I_{g-k}\]
where $I_k$ is an ideal of relations between $\alpha, \beta$ and $\gamma$ and $P_k$ is the primitive component of $\wedge^k
H^3(M_C(2,\mc{L}_0),\mb{Q})$ with respect to $\gamma$ (see \S \ref{subsecrel}
for precise definitions). 
The conjecture was proved by Kirwan \cite{kirw}.
In \cite{zag} Zagier showed that in fact the relations between the generators can be determined recursively. In particular,
$I_k \subset \mb{Q}[\alpha, \beta, \gamma]$ is generated by 
$(\xi_k, \xi_{k+1}, \xi_{k+2})$, where $\xi_0=1$ and recursively,
\[\xi_{k+1}:=\alpha \xi_k + k^2\beta \xi_{k-1}+2k(k-1)\gamma\xi_{k-2}.\]
This was also proven independently by Baranovskii \cite{bar}, Siebert and Tian \cite{sieb}, later by Herrera and Salamon \cite{HS} and also by King and Newstead \cite{kingn}. 
Although the obvious generalization of Mumford's conjecture to the cases when rank $n \geq 3$ is false, Earl and Kirwan in \cite{EaK} for arbitrary $n$, give additional relations such that 
together with the Mumford relations they form a complete set of relations between the generators of the cohomology ring of
the moduli space of rank $n$ semi-stable sheaves with coprime degree $d$ over $C$.
However, none of the existing literature studies the above conjecture for a singular curve, even in the case of rank $2$.

Let $X_0$ be an irreducible nodal curve with exactly one node.
Denote by $U_{X_0}(2,\mc{L}_0)$ the moduli space of rank $2$ semi-stable sheaves on $X_0$ with determinant $\mc{L}_0$ (here 
$\mc{L}_0$ is also an invertible sheaf of odd degree) as defined by Sun in \cite{sun1}
(we use a different notation for the moduli space as the definition of determinant in this case is different from the classical definition). We also know by \cite{K4} that the moduli space $U_{X_0}(2,\mc{L}_0)$ is non-empty. 
One of the difficulties in generalizing the above results to the cohomology ring of $U_{X_0}(2,\mc{L}_0)$ arises from the fact that 
$U_{X_0}(2,\mc{L}_0)$ is singular unlike the moduli space $M_C(2,\mc{L}_0)$.  As a result,
most of the techniques used for $M_C(2,\mc{L}_0)$ fail.

As there is no straightforward way to generalize the techniques in the literature, we 
instead embed the nodal curve $X_0$ in a regular family $\pi:\mc{X} \to \Delta$ (here $\Delta$ denotes the unit disc),
smooth over $\Delta^*:=\Delta\backslash \{0\}$ and central fiber isomorphic to $X_0$ (the existence of such a family follows from the 
completeness of the moduli space of stable curves, see \cite[Theorem B.$2$]{bake}). 
Note that the invertible sheaf $\mc{L}_0$ on $X_0$ lifts to a relative invertible sheaf $\mc{L}_{\mc{X}}$ over $\mc{X}$. 
There is a well-known relative Simpson's moduli space, denoted $U_{\mc{X}}(2,\mc{L}_{\mc{X}})$
of rank $2$ semi-stable sheaves with determinant $\mc{L}_{\mc{X}}$ over $\mc{X}$ (see \cite{ind, ink3} for basic definitions and results).
The (relative) moduli space $U_{\mc{X}}(2,\mc{L}_{\mc{X}})$ is flat over $\Delta$ and has central fiber $U_{X_0}(2,\mc{L}_0)$.
For any $s \in \Delta^*$, the fiber $U_{\mc{X}}(2,\mc{L})_s$ is isomorphic to $M_{\mc{X}_s}(2,\mc{L}|_{\mc{X}_s})$ and hence non-singular.
Substituting $C$ by $\mc{X}_s$ and $\mc{L}_0$ by $\mc{L}|_{\mc{X}_s}$ in the above discussion, we rewrite Mumford's conjecture 
for the cohomology ring $H^*(U_{X_0}(2,\mc{L}_0), \mb{Q})$ as follows:
\begin{conj}[Generalized Mumford conjecture]
 Denote by $P_k^{\mr{mon}}$ the subspace of $P_k$ (as before) consisting of all elements that are monodromy invariant (under the 
 natural monodromy action on $H^*(M_{\mc{X}_s}(2,\mc{L}|_{\mc{X}_s}), \mb{Q})$). Then, $P_k^{\mr{mon}}$ is independent (up to isomorphism)
 of the choice of the family $\pi$ and the cohomology ring $H^*(U_{X_0}(2,\mc{L}_0), \mb{Q})$
 decomposes as 
 \[H^*(U_{X_0}(2,\mc{L}_0), \mb{Q}) \cong \bigoplus\limits_{k=0}^g 
 P_k^{\mr{mon}} \otimes \mb{Q}[\alpha, \beta, \gamma]/I_{g-k}.\]
\end{conj}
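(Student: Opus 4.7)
\smallskip

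The natural strategy is to use the flat family $U_\mc{X}(2,\mc{L}_\mc{X})\to\Delta$ to transport the Mumford--Kirwan relations from the smooth fibers to the central fiber, and to identify $H^{*}(U_{X_0}(2,\mc{L}_0),\z)$ with the monodromy-invariant part of the limit mixed Hodge structure of $H^{*}(M_{\mc{X}_s}(2,\mc{L}|_{\mc{X}_s}),\z)$. First I would produce canonical lifts of the generators: since $d$ is odd, a relative universal sheaf $\mc{U}$ exists on $\mc{X}\times_\Delta U_\mc{X}(2,\mc{L}_\mc{X})$, and the K\"unneth components of $c_2(\mc{U})$ provide classes $\widetilde\alpha\in H^{2}(U_\mc{X}(2,\mc{L}_\mc{X}),\z)$, $\widetilde\beta\in H^{4}(U_\mc{X}(2,\mc{L}_\mc{X}),\z)$ and, fiberwise on $\mc{X}$, a section of $R^{3}$-cohomology that restricts on each smooth fiber to the isomorphism $\phi$ of Mumford--Newstead. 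Pulling back to the central fiber produces the candidate generators $\alpha,\beta,\gamma\in H^{*}(U_{X_0}(2,\mc{L}_0),\z)$ and, via $\phi$, an identification of the subspace of $H^{3}$ coming from $H^{3}$-classes of nearby fibers with the monodromy-invariant part of $H^{1}(\mc{X}_s,\z)$; thus the independence of $P_k^{\mr{mon}}$ up to isomorphism reduces to the fact that the local monodromy on $H^{1}(\mc{X}_s)$ near an irreducible nodal degeneration is controlled by the Picard--Lefschetz formula and depends only on the topological type of the node.

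Next I would extract the relations. Because the Mumford--Kirwan relations hold on each smooth fiber, and because $\widetilde\alpha,\widetilde\beta$ and the relative $\widetilde\gamma$-classes are globally defined on $U_\mc{X}(2,\mc{L}_\mc{X})$, restricting to $U_{X_0}(2,\mc{L}_0)$ produces relations of the same form, giving a surjective ring homomorphism
\[
\bigoplus_{k=0}^{g} P_k^{\mr{mon}}\otimes \z[\alpha,\beta,\gamma]/I_{g-k}\;\longrightarrow\;H^{*}(U_{X_0}(2,\mc{L}_0),\z).
\]
To upgrade this to an isomorphism I would invoke a suitable form of Deligne's local invariant cycles theorem together with the decomposition theorem for the proper flat map $U_\mc{X}(2,\mc{L}_\mc{X})\to\Delta$. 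One has to be careful here because the total space is not smooth; the key is that after restricting to a punctured neighborhood of $0$ the family is smooth and proper, so the nearby cycles sheaf $\psi\z$ carries a mixed Hodge structure, and specialization gives a map $H^{*}(U_{X_0}(2,\mc{L}_0),\z)\to H^{*}_{\lim}(U_{\mc{X}_s}(2,\mc{L}_\mc{X}|_{\mc{X}_s}),\z)^{T}$ whose image one must compute. Combined with the surjection above and the fact that both maps factor through the same algebra generated by $\widetilde\alpha,\widetilde\beta,\widetilde\gamma$-classes, this forces the kernel to coincide with the Mumford ideal $\bigoplus I_{g-k}$.

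Finally, I would close the argument by a dimension count: independently compute the Hodge--Poincar\'e polynomial of $U_{X_0}(2,\mc{L}_0)$ using a geometric description of the moduli space of semi-stable torsion-free sheaves on an irreducible nodal curve (e.g. via the normalization and the known stratification of the boundary locus of non-locally-free sheaves), and check that it agrees with the Hodge--Poincar\'e polynomial of $\bigoplus_k P_k^{\mr{mon}}\otimes\z[\alpha,\beta,\gamma]/I_{g-k}$. This byproduct is also what the abstract advertises. The main obstacle will be this last comparison of mixed Hodge structures, since $U_\mc{X}(2,\mc{L}_\mc{X})$ is singular precisely along the singular locus of $U_{X_0}(2,\mc{L}_0)$, so the classical local invariant cycles theorem does not apply verbatim; one needs either a resolution of the total space with sufficiently well-understood exceptional locus, or a direct analysis of $R\pi_{*}\z$ using the decomposition theorem and the geometry of the strata of $U_{X_0}(2,\mc{L}_0)$ parametrizing non-locally-free sheaves.
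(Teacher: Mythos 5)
Your overall philosophy (degenerate the Kirwan--Mumford relations through a limit mixed Hodge structure and identify $H^*(U_{X_0}(2,\mc{L}_0),\mb{Q})$ with the monodromy-invariant part of the limit) is the same as the paper's, but the one step you explicitly leave open at the end is precisely the crux, and without it the argument does not close. As you observe, the total space of $U_{\mc{X}}(2,\mc{L}_{\mc{X}})\to\Delta$ is singular and its central fibre is not a normal crossings divisor, so neither Steenbrink's weight spectral sequence nor the local invariant cycle theorem applies to that family; you offer ``a resolution with well-understood exceptional locus or the decomposition theorem'' as alternatives but carry out neither. The missing idea is Gieseker's relative moduli space $\mc{G}(2,\mc{L})\to\Delta$: a regular total space with the same smooth fibres over $\Delta^*$ whose central fibre $\mc{G}_{X_0}(2,\mc{L}_0)=\mc{G}_0\cup\mc{G}_1$ is a reduced simple normal crossings divisor. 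There the specialization map is neither injective nor surjective, and its kernel and cokernel are computed explicitly by determining the kernel of the Gysin morphism $H^{i-2}(\mc{G}_0\cap\mc{G}_1,\mb{Q})(-1)\to H^i(\mc{G}_0,\mb{Q})\oplus H^i(\mc{G}_1,\mb{Q})$ via the Leray--Hirsch description of the $\mb{P}^1\times\mb{P}^1$-, $\mb{P}^3$- and $\ov{\mr{SL}}_2$-bundles over $M_{\widetilde{X}_0}(2,\widetilde{\mc{L}}_0)$ (Proposition \ref{ner15} and Theorem \ref{bet01}). One then descends to the Simpson space through the proper contraction $\theta:\mc{G}_{X_0}(2,\mc{L}_0)\to U_{X_0}(2,\mc{L}_0)$, showing $\theta^*$ is injective with cokernel matching $\ker(\mr{sp}_i)$ exactly, whence $H^i(U_{X_0}(2,\mc{L}_0),\mb{Q})\cong W_iH^i(\mc{G}(2,\mc{L})_\infty,\mb{Q})$. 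None of this is replaceable by the formal considerations in your second paragraph: the surjectivity of your proposed map $\bigoplus_k P_k^{\mr{mon}}\otimes\mb{Q}[\alpha,\beta,\gamma]/I_{g-k}\to H^*(U_{X_0}(2,\mc{L}_0),\mb{Q})$ is not ``restriction of relations'' but is essentially the statement being proved.

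Two further points. First, your closing ``dimension count'' is not available as an independent check: the paper obtains the Hodge--Poincar\'e polynomial of $U_{X_0}(2,\mc{L}_0)$ as a corollary of the structure theorem (via the exact sequence of weight-graded pieces and the isomorphism $\Phi_{p+q+1}$), not from a separate geometric computation, and you give no actual method for computing it independently; as stated this step is circular. Second, be careful with ``pulling back the universal classes to the central fibre'': the degree-three generators do not all survive. The class $\psi_{2g}^\infty$ spans $\mr{Gr}^W_4H^3$ of the limit, is not monodromy invariant, and does not come from $H^3(U_{X_0}(2,\mc{L}_0),\mb{Q})$; only $\alpha,\beta,\gamma_1,\dots,\gamma_{2g-1}$ and the product $\gamma_g\gamma_{2g}$ descend (Remark \ref{rem:gen}), which is exactly why $P_k^{\mr{mon}}$ and the condition ``if $2g\in I$ then $g\in I$'' appear in the statement. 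Your sketch treats all the $\gamma_i$ as restrictions of globally defined classes on the relative moduli space, which would contradict this.
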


Apart from the obvious motivation of Mumford's conjecture, one can also use the conjecture to 
 compute the Hodge-Poincar\'{e} polynomial
of the cohomology ring of $M_{\mc{X}_s}(2, \mc{L}_s)$ for $s \in \Delta^*$, where $\mc{L}_s:=\mc{L}_{\mc{X}}|_{\mc{X}_s}$ (see \cite{kingn}). 
Recall, the Hodge-Poincar\'{e} polynomial for $M_{\mc{X}_s}(2, \mc{L}_s)$: 
\[\sum\limits_{p,q} h^{p,q}(M_{\mc{X}_s}(2, \mc{L}_s),\mb{C})u^pv^q.\]
One of the first results in this direction was by Newstead \cite{newt}, where he gives a recursive formula for Betti numbers 
of $M_C(2,\mc{L}_0)$.
This was generalized by Harder, Narasimhan \cite{HN}, Desale and Ramanan \cite{DR} using number-theoretic methods, 
in the case of any coprime rank and degree.
Later, Bifet, Ghione and Letizia \cite{BGL} gave the same formula but using methods from algebraic geometry.
In \cite{EK}, Earl and Kirwan used methods from gauge theory to obtain the Hodge-Poincar\'{e} polynomial for $M_{\mc{X}_s}(2,d)$ and 
$M_{\mc{X}_s}(2, \mc{L}_s)$. However, an analogous Hodge-Poincar\'{e} polynomial for $U_{X_0}(2,\mc{L}_0)$ was yet unknown.
In this article we prove:
\begin{thm}\label{th:bet08}
The generalized Mumford conjecture holds true. Furthermore,
 the cohomology ring $H^*(U_{X_0}(2,\mc{L}_0), \mb{Q})$ is generated by $\alpha, \beta, \gamma_i$ for $1 \le i \le 2g-1$ and $\gamma_g\gamma_{2g}$.
\end{thm}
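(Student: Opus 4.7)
The plan is to transfer Kirwan's Mumford decomposition from the smooth general fibre of the relative Simpson moduli $\pi_M\colon U_\mc{X}(2,\mc{L}_\mc{X})\to\Delta$ to the singular central fibre $U_{X_0}(2,\mc{L}_0)$ by taking monodromy invariants. Throughout, $T$ denotes the geometric monodromy of $\pi_M$ acting on $H^*(M_{\mc{X}_s}(2,\mc{L}_s),\mb{Q})$.

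\emph{Step 1 (monodromy invariants on the general fibre).} Since $X_0$ is irreducible with exactly one node, the Picard-Lefschetz formula describes the monodromy on $H^1(\mc{X}_s,\mb{Q})$ as a symplectic transvection along a single vanishing cycle $\delta$. Choosing the symplectic basis so that $\delta=e_{2g}$, $T$ fixes every $e_i$ with $i\neq g$ and sends $e_g$ to $e_g+e_{2g}$. The Mumford-Newstead isomorphism $\phi$ is $T$-equivariant, as it arises from the second Chern class of a relative universal bundle; therefore $T$ fixes $\alpha,\beta,\gamma$ and every $\gamma_i$ with $i\neq g$, and sends $\gamma_g$ to $\gamma_g+\gamma_{2g}$. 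Since $\gamma_{2g}^2=0$ (odd degree), the product $\gamma_g\gamma_{2g}$ is $T$-invariant. By Kirwan's theorem,
\[
H^*(M_{\mc{X}_s}(2,\mc{L}_s),\mb{Q})\cong\bigoplus_{k=0}^g P_k\otimes\mb{Q}[\alpha,\beta,\gamma]/I_{g-k},
\]
with $T$ acting trivially on $\mb{Q}[\alpha,\beta,\gamma]/I_{g-k}$ and only on the primitive factor $P_k$. Taking $T$-invariants gives
\[
H^*(M_{\mc{X}_s}(2,\mc{L}_s),\mb{Q})^T\cong\bigoplus_{k=0}^g P_k^{\mr{mon}}\otimes\mb{Q}[\alpha,\beta,\gamma]/I_{g-k},
\]
and the family-independence claim for $P_k^{\mr{mon}}$ follows because the Picard-Lefschetz transvection depends only on the pair $(H^1(\mc{X}_s,\mb{Q}),\delta)$ up to isomorphism.

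\emph{Step 2 (passage to the central fibre and generators).} Properness of $\pi_M$ yields a deformation retraction of $U_\mc{X}(2,\mc{L}_\mc{X})$ onto $U_{X_0}(2,\mc{L}_0)$, so $H^*(U_{X_0}(2,\mc{L}_0),\mb{Q})\cong H^*(U_\mc{X}(2,\mc{L}_\mc{X}),\mb{Q})$, and the specialisation to the general fibre factors through the $T$-invariants. The core task is to prove that
\[
H^*(U_{X_0}(2,\mc{L}_0),\mb{Q})\longrightarrow H^*(M_{\mc{X}_s}(2,\mc{L}_s),\mb{Q})^T
\]
is an isomorphism, which then yields the generalised Mumford decomposition. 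Granted this, the assertion on generators follows from Step 1: any $T$-invariant monomial in the $\gamma_j$'s can be rewritten using $\gamma=\sum\gamma_i\gamma_{i+g}$, the relation $\gamma_{2g}^2=0$, and the Mumford relations in $I_{g-k}$, leaving only $\alpha$, $\beta$, the $\gamma_i$ with $1\le i\le 2g-1$, and $\gamma_g\gamma_{2g}$ as algebra generators.

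\emph{Main obstacle.} The isomorphism in Step 2 is the hard part. Deligne's classical local invariant cycle theorem requires a smooth total space, whereas $U_\mc{X}(2,\mc{L}_\mc{X})$ is singular along its non-locally-free locus, so the standard argument does not apply directly. To overcome this I would either work with a suitable desingularisation of $U_\mc{X}$ and push down via the decomposition theorem, or apply Saito's mixed Hodge modules to the nearby cycle complex $\psi_{\pi_M}\mb{Q}$ to extract the relevant graded pieces of the limit mixed Hodge structure, and then match dimensions against an independent computation of the Hodge-Poincaré polynomial of $U_{X_0}(2,\mc{L}_0)$ (the Hodge-theoretic byproduct promised in the abstract).
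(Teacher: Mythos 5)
Your Step 1 is essentially sound and consistent with what the paper needs: the Picard--Lefschetz description of $T$ on $H^1(\mc{X}_s,\mb{Q})$, the $T$-equivariance of the Mumford--Newstead isomorphism, and the observation that taking $T$-invariants of Kirwan's decomposition yields $\bigoplus_k P_k^{\mr{mon}}\otimes\mb{Q}[\alpha,\beta,\gamma]/I_{g-k}$; your identification of the generators of the invariant subring is also correct. The genuine gap is exactly where you place it: Step 2 is not a proof. You reduce the theorem to the assertion that $H^*(U_{X_0}(2,\mc{L}_0),\mb{Q})\to H^*(M_{\mc{X}_s}(2,\mc{L}_s),\mb{Q})^T$ is an isomorphism, correctly note that the local invariant cycle theorem does not apply to the Simpson degeneration (whose central fiber is singular and not a normal crossings divisor), and then only list candidate strategies without carrying any of them out. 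The last one --- matching dimensions against an ``independent'' computation of the Hodge--Poincar\'e polynomial of $U_{X_0}(2,\mc{L}_0)$ --- is circular: that polynomial is Theorem \ref{th:bet07}, which the paper derives \emph{from} the structure theorem you are trying to prove.

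The mechanism the paper uses to close this gap is entirely absent from your proposal. It replaces the Simpson family by the relative Gieseker moduli space $\mc{G}(2,\mc{L})$, whose central fiber $\mc{G}_{X_0}(2,\mc{L}_0)=\mc{G}_0\cup\mc{G}_1$ is a reduced simple normal crossings divisor, so that Steenbrink's weight spectral sequences apply and give the four-term exact sequence \eqref{ntor02}. Even there the specialization map is neither injective nor surjective: its kernel is computed by a Leray--Hirsch analysis of the $\mb{P}^1\times\mb{P}^1$-, $\mb{P}^3$- and $\ov{\mr{SL}}_2$-bundles over $M_{\widetilde{X}_0}(2,\widetilde{\mc{L}}_0)$ (Proposition \ref{ner15}, Theorem \ref{bet01}), leading to Theorem \ref{th:bet1}. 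The passage to $U_{X_0}(2,\mc{L}_0)$ then goes through the contraction $\theta:\mc{G}_{X_0}(2,\mc{L}_0)\to U_{X_0}(2,\mc{L}_0)$ and the key Proposition \ref{prop:bet02}: $\theta^*$ is injective and its cokernel is matched exactly against $\ker(\mr{sp}_i)$, whence $H^i(U_{X_0}(2,\mc{L}_0),\mb{Q})\cong W_iH^i(\mc{G}(2,\mc{L})_\infty,\mb{Q})$, which here coincides with the monodromy-invariant part since $N_i^2=0$. Injectivity of the restriction from $H^*(U_{X_0}(2,\mc{L}_0),\mb{Q})$ to the general fiber is precisely the subtle point your sketch cannot reach --- for the Gieseker model the analogous map has a large kernel --- and without an argument of this kind (or a genuinely executed decomposition-theorem or mixed-Hodge-module argument) the theorem is not established.
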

See Theorem \ref{th:simppo} and Remark \ref{rem:gen} for the precise statements.
 We also obtain the Hodge-Poincar\'{e} formula: As $U_{X_0}(2,\mc{L}_0)$ is singular, the associated 
 cohomology groups do not have a pure Hodge structure. Then, the 
 Hodge-Poincar\'{e} formula for $U_{X_0}(2,\mc{L}_0)$ is 
 defined as \[\sum\limits_{p,q} \sum\limits_i  \dim H^{p,q} \mr{Gr}^W_{p+q} H^i(U_{X_0}(2,\mc{L}_0),\mb{C})x^py^q.\]
 We also show:
\begin{thm}[see Theorem \ref{th:simppo}]\label{th:bet07}
 The Hodge-Poincar\'{e} polynomial associated to the moduli space 
 $U_{X_0}(2,\mc{L}_0)$ is 
 \[\frac{(1+xy^2)^{g-1}(1+x^2y)^{g-1}(1+xy+x^3y^3)-x^gy^g(1+x)^{g-1}(1+y)^{g-1}(2+xy)}{(1-xy)(1-x^2y^2)}.\]
\end{thm}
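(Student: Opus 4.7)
The plan is to obtain Theorem~\ref{th:bet07} as a byproduct of the decomposition in Theorem~\ref{th:bet08}, by computing the Hodge-Poincar\'{e} polynomial of each summand and assembling. Starting from the isomorphism
\[H^*(U_{X_0}(2,\mc{L}_0), \mb{Q}) \cong \bigoplus_{k=0}^g P_k^{\mr{mon}} \otimes \mb{Q}[\alpha, \beta, \gamma]/I_{g-k},\]
the first step is to verify that it respects the mixed Hodge structures on both sides. The classes $\alpha, \beta, \gamma$ are restrictions of classes from the relative Simpson moduli space $U_{\mc{X}}(2, \mc{L}_{\mc{X}})$ and are pure of Hodge bidegrees $(1,1), (2,2), (3,3)$ respectively, while the classes in $P_k^{\mr{mon}}$ come from the monodromy invariants of $H^*(M_{\mc{X}_s}(2,\mc{L}_s),\mb{Q})$; by the local invariant cycle theorem (a consequence of Clemens--Schmid applied to the family $U_{\mc{X}}(2, \mc{L}_{\mc{X}}) \to \Delta$), they lift to $H^*(U_{X_0}(2,\mc{L}_0),\mb{Q})$ in a manner compatible with the limit mixed Hodge structure.

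Next I would compute the Hodge-Poincar\'{e} polynomial $Q_{g-k}(x,y)$ of $\mb{Q}[\alpha,\beta,\gamma]/I_{g-k}$ as the Hodge-weighted refinement of the classical smooth-curve computation. Since $\alpha, \beta, \gamma$ are of pure Hodge bidegrees $(1,1), (2,2), (3,3)$ and the $\xi_k$ in Zagier's recursion are homogeneous of Hodge bidegree $(k,k)$, the quotient admits a closed-form Hodge-Poincar\'{e} series with denominator $(1-xy)(1-x^2y^2)$; the would-be factor $(1-x^3y^3)$ is killed by the $\gamma$-containing relations in $I_{g-k}$.

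The bulk of the work lies in computing the Hodge-Poincar\'{e} polynomial of $P_k^{\mr{mon}}$. By Picard--Lefschetz applied to the family $\pi: \mc{X} \to \Delta$ with a single node, the monodromy acts on $H^1(\mc{X}_s,\mb{Q}) \cong H^3(M_{\mc{X}_s}(2,\mc{L}_s),\mb{Q})$ as a symplectic transvection along one vanishing cycle $\delta$. Choosing the symplectic basis so that $\delta = e_g$, the invariant subspace $V \subset H^1(\mc{X}_s,\mb{Q})$ has dimension $2g-1$ (spanned by $e_1,\ldots,e_{2g-1}$), and the monodromy-invariant part of $\wedge^k H^1(\mc{X}_s,\mb{Q})$ decomposes as
\[\wedge^k V \oplus \bigl( e_g \wedge e_{2g} \wedge \wedge^{k-2} V \bigr).\]
Intersecting with the primitive subspace for the symplectic form $\omega$ (which corresponds to $\gamma$) and tracking Hodge bigradings through $\phi$ (which, arising from the K\"unneth component of the type-$(2,2)$ class $c_2(\mc{U})$, sends $H^{1,0}(\mc{X}_s)$ to Hodge type $(1,2)$ and $H^{0,1}(\mc{X}_s)$ to type $(2,1)$) yields a closed expression for the Hodge-Poincar\'{e} polynomial of $P_k^{\mr{mon}}$. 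Summing these contributions weighted by $Q_{g-k}(x,y)$ and simplifying via binomial identities analogous to Zagier's produces the claimed rational function; heuristically, the factor $(1+xy^2)^{g-1}(1+x^2y)^{g-1}$ reflects the loss of the $\gamma_{2g}$-generator from the invariant exterior algebra, while the polynomial factor $(1+xy+x^3y^3)$ encodes the residual contribution from the surviving invariant $\gamma_g \gamma_{2g}$ and its interplay with the primitive decomposition.

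The main obstacle is the third step: verifying that the Lefschetz decomposition of $\wedge^k H^1(\mc{X}_s, \mb{Q})$ is compatible with the monodromy action, and carefully bookkeeping Hodge bidegrees along the shift $\phi$. A secondary subtlety lies in the mixed-Hodge-structure compatibility in the first step, since $U_{X_0}(2,\mc{L}_0)$ is singular and its cohomology carries a non-trivial weight filtration that must match the limit mixed Hodge structure on the nearby fiber; this may require an explicit nearby-cycles analysis on the relative moduli space $U_{\mc{X}}(2, \mc{L}_{\mc{X}})$.
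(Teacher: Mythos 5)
Your overall route --- deduce the Hodge--Poincar\'{e} polynomial from the ring decomposition of Theorem \ref{th:bet08} by computing the Hodge types of $P_k^{\mr{mon}}$ and of $\mb{Q}[\alpha,\beta,\gamma]/I_{g-k}$ and summing --- is genuinely different from the paper's, but it has a gap at precisely the step you label a ``secondary subtlety,'' and that step is in fact the main content of the proof. You propose to get the compatibility of the decomposition with mixed Hodge structures from ``the local invariant cycle theorem (a consequence of Clemens--Schmid applied to the family $U_{\mc{X}}(2,\mc{L}_{\mc{X}}) \to \Delta$).'' This does not work as stated: Clemens--Schmid and the Steenbrink machinery require the central fiber to be a reduced simple normal crossings divisor, and $U_{X_0}(2,\mc{L}_0)$ is not one --- this is exactly the obstruction the paper flags in the introduction and the reason it replaces the Simpson family by the relative Gieseker space $\mc{G}(2,\mc{L})$, whose central fiber $\mc{G}_0\cup\mc{G}_1$ is SNC. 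Moreover, even where it applies, the local invariant cycle theorem only gives \emph{surjectivity} of $H^i(\mc{Y}_0)\to H^i(\mc{Y}_\infty)^T$; what is actually needed to read off Hodge numbers is the \emph{injectivity} of the specialization map for $U_{X_0}(2,\mc{L}_0)$, i.e.\ the isomorphism $H^i(U_{X_0}(2,\mc{L}_0),\mb{Q})\cong W_iH^i(\mc{G}(2,\mc{L})_\infty,\mb{Q})$ of equation \eqref{eq:bet12}. The paper obtains this by a nontrivial comparison: it computes $\ker(\mr{sp}_i)$ on the Gieseker side (Corollary \ref{ntor03} plus the Gysin-kernel computation of Theorem \ref{bet01}), computes $\mr{coker}(\theta^*)$ for the proper morphism $\theta:\mc{G}_{X_0}(2,\mc{L}_0)\to U_{X_0}(2,\mc{L}_0)$ (Proposition \ref{prop:bet02}), and shows the two cancel. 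Without that input, your first step is circular: the MHS-compatible decomposition you start from is the conclusion of Theorem \ref{th:simppo}, not something available in advance.

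Two further points. First, your description of the monodromy invariants is correct: the Picard--Lefschetz transvection along $e_g$ has invariant subspace $V=\langle e_1,\dots,e_{2g-1}\rangle$ in $H^1$, and the invariant part of $\wedge^kH^1$ is $\wedge^kV\oplus\bigl(e_g\wedge e_{2g}\wedge\wedge^{k-2}V\bigr)$, which matches the paper's monomial criterion ``if $2g\in I$ then $g\in I$'' and Remark \ref{rem:gen}. However, to extract the Hodge--Poincar\'{e} polynomial as defined here you must also record which monomials land in $\mr{Gr}^W_{i-1}H^i$ versus $\mr{Gr}^W_iH^i$ (those containing $\psi_g^\infty$ but not $\psi_{2g}^\infty$ drop weight by one and contribute to the polynomial in degree $p+q=i-1$); your sketch does not address this bookkeeping. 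Second, the final combinatorial assembly is only asserted heuristically. The paper avoids it entirely: using Proposition \ref{prop:bet05} and the isomorphism $\Phi_{p+q+1}$ it reduces everything to
\[h^{p,q}(U_{X_0}(2,\mc{L}_0))=h^{p,q}(\mc{G}(2,\mc{L})_s)-h^{p-2,q-1}(M_{\widetilde{X}_0}(2,\widetilde{\mc{L}}_0))-h^{p-1,q-2}(M_{\widetilde{X}_0}(2,\widetilde{\mc{L}}_0))+h^{p-1,q-1}(M_{\widetilde{X}_0}(2,\widetilde{\mc{L}}_0)),\]
and then substitutes del Ba\~{n}o's closed forms $P_g(x,y)$ and $P_{g-1}(x,y)$, giving the answer as $P_g(x,y)+P_{g-1}(x,y)(xy-x^2y-xy^2)$. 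If you want to salvage your route, you would need to (i) import the paper's comparison with the Gieseker space to justify \eqref{eq:bet12}, and (ii) actually verify the resulting identity of generating functions against the claimed rational expression.
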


We now discuss the strategy of the proof.
The idea is to use the theory of variation of mixed Hodge structures by Schmid \cite{schvar} and Steenbrink \cite{ste1}
to relate the mixed Hodge structure on the central fiber of the 
relative moduli space to the limit mixed Hodge structure on the generic fiber.
Unfortunately, the singularity of the central fiber of $U_{\mc{X}}(2,\mc{L}_{\mc{X}})$
is not a normal crossings divisor, hence not a suitable 
candidate for using tools from \cite{ste1, schvar}. However, there is a different construction of a relative 
moduli space of rank $2$ semi-stable sheaves with determinant $\mc{L}_{\mc{X}}$ over $\mc{X}$, due to Gieseker \cite{gies}.
The advantage of the latter family of moduli spaces is that, in this case the central fiber, denoted $\mc{G}_{X_0}(2, \mc{L}_0)$ 
is a simple normal crossings divisor, hence compatible with the setup in 
\cite{ste1, schvar}. Moreover, the generic fibers
of the two relative moduli spaces coincide. 
We study the generators of the cohomology ring of $\mc{G}_{X_0}(2,\mc{L}_0)$
in a separate article \cite{genpre}, as it does not play any role in the proof of 
the generalized Mumford's conjecture.
Denote by $\mc{G}(2,\mc{L}_{\mc{X}})_\infty$ the generic fiber of the 
relative moduli space. By Steenbrink \cite{ste1}, $H^i(\mc{G}(2,\mc{L}_{\mc{X}})_\infty, \mb{Q})$
is equipped with a (limit) mixed Hodge structure such that
the specialization morphism \[\mr{sp}_i: H^i(\mc{G}_{X_0}(2,\mc{L}_0), \mb{Q}) \to H^i(\mc{G}(2,\mc{L}_{\mc{X}})_\infty, \mb{Q})\]
is a morphism of mixed Hodge structures for all $i \ge 0$.
Using the Mumford relations on $M_{\mc{X}_s}(2, \mc{L}_s)$ shown by Kirwan, 
we obtain a complete set of relations between the generators
of the cohomology ring $H^*(\mc{G}(2,\mc{L})_\infty, \mb{Q})$ of
the generic fiber $\mc{G}(2,\mc{L}_{\mc{X}})_\infty$.
However, not all elements in $H^i(\mc{G}(2, \mc{L}_{\mc{X}})_\infty, \mb{Q})$
are monodromy invariant. As a result the specialization morphism 
$\mr{sp}_i$ is neither injective, nor surjective (see Corollary \ref{ntor03}).
To compute explicitly the kernel and cokernel of $\mr{sp}_i$ we 
study the Gysin morphism from the intersection of the two 
components of $\mc{G}_{X_0}(2,\mc{L}_0)$ to the irreducible
components (see Theorem \ref{bet01}). Using this we prove:

\begin{thm}[see Theorem \ref{th:bet1}]\label{th:intro1}
 We have the following isomorphism of \emph{graded rings}:
 \[H^*(\mc{G}_{X_0}(2,\mc{L}_0),\mb{Q}) \cong \left(\bigoplus\limits_i P_i^{\mr{mon}} \otimes 
 \frac{\mb{Q}[\alpha, \beta, \gamma]}{I_{g-i}}\right) \oplus \left(\bigoplus\limits_i \widetilde{P}_{i-2} 
 \otimes \frac{\mb{Q}[\widetilde{\alpha}, \widetilde{\beta}, \widetilde{\gamma}, X, Y]}{(\widetilde{I}_{g-i-3}, X^2, Y^2, X-Y)}\right),\]
 where $\widetilde{P}_i, \widetilde{\alpha}, \widetilde{\beta}, \widetilde{\gamma}$ and $\widetilde{I}_{g-i}$ are objects analogous 
 to ${P}_i, {\alpha}, {\beta}, {\gamma}$ and ${I}_{g-i}$ defined earlier after replacing $C$ by $\widetilde{X}_0$ and $\mc{L}_0$ by
 $\widetilde{\mc{L}}_0:=\pi_0^*\mc{L}_0$, $\pi_0: \widetilde{X}_0 \to X_0$ is the normalization morphism.
 \end{thm}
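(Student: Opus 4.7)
The plan is to exploit the fact, established in earlier sections of the paper, that $\mc{G}_{X_0}(2,\mc{L}_0)$ is a simple normal crossings variety with two smooth irreducible components $G_0, G_1$ meeting transversally along a smooth intersection $D := G_0 \cap G_1$. The strategy combines the Mayer-Vietoris sequence for this SNC decomposition with the specialization morphism $\mr{sp}_i$ arising from Steenbrink's limit mixed Hodge structure.

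The first step is to write down the Mayer-Vietoris long exact sequence
\[\cdots \to H^{i-1}(D,\mb{Q}) \xrightarrow{\delta} H^i(\mc{G}_{X_0},\mb{Q}) \to H^i(G_0,\mb{Q}) \oplus H^i(G_1,\mb{Q}) \xrightarrow{\rho} H^i(D,\mb{Q}) \to \cdots\]
where $\rho$ is the difference of restrictions and $\delta$ is the shifted Gysin push-forward. This presents $H^i(\mc{G}_{X_0},\mb{Q})$ as an extension of $\ker \rho$ by $\mr{coker}(\rho)$ in the appropriate degrees. The first summand of the claimed decomposition is then identified with $\ker \rho$: Theorem \ref{bet01} computes the Gysin morphism and shows that the image of $\mr{sp}_i$ coincides with $\ker \rho$, while Kirwan's theorem applied to the smooth fiber $\mc{X}_s$ (genus $g$) identifies this image with $\bigoplus_k P_k^{\mr{mon}} \otimes \mb{Q}[\alpha,\beta,\gamma]/I_{g-k}$, since only the monodromy invariant primitive classes survive in the image of $\mr{sp}_i$ and the relations $I_{g-k}$ are transported directly from the Mumford relations on the generic fiber.

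The second summand will come from the shifted cokernel of $\rho$. I would apply Kirwan's theorem to the normalization $\widetilde{X}_0$ (which has genus $g-1$) together with the projective bundle formula to describe $H^*(D,\mb{Q})$ explicitly. The classes $X$ and $Y$ are the two relative hyperplane classes of the natural $\mb{P}^1 \times \mb{P}^1$-bundle structure on $D$ (one $\mb{P}^1$ for each preimage of the node), so $X^2 = Y^2 = 0$; the additional relation $X - Y$ on the cokernel reflects the fact that $\rho$ surjects onto the antisymmetric combination of these two hyperplane classes, so only their symmetric image $X = Y$ survives. The index shift $\widetilde{P}_{i-2}$ and the index $g-i-3$ on $\widetilde{I}$ are then forced by degree bookkeeping: the Gysin map $\delta$ raises cohomological degree by $2$, and the tilde objects live on a genus-$(g-1)$ curve whose associated moduli space has the corresponding lower dimension.

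The main obstacle will be upgrading this Mayer-Vietoris additive decomposition to an isomorphism of \emph{graded rings}. A priori the cup product on $H^*(\mc{G}_{X_0},\mb{Q})$ could mix the two summands, so I must show that the image of $\delta$ forms a square-zero ideal whose cross products with the pull-back image from $G_0 \sqcup G_1$ vanish modulo itself. This reduces to self-intersection computations of $D$ inside $\mc{G}_{X_0}$ together with the explicit cup-product formulas furnished by Theorem \ref{bet01}. Once this ring splitting is in place, identifying each factor with the presentation in the statement is just a matter of matching generators via $\mr{sp}_i$ on one side and via the projective bundle formula for $D$ on the other.
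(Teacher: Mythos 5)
Your overall plan of exploiting the two-component SNC structure of $\mc{G}_{X_0}(2,\mc{L}_0)$ is in the right spirit, but you have matched the two summands of the statement to the wrong pieces of the geometry, and the argument as outlined would not produce the stated presentation. The decomposition in the theorem is not the Mayer--Vietoris extension $0\to\mr{coker}(\rho^{i-1})\to H^i(\mc{G}_{X_0}(2,\mc{L}_0),\mb{Q})\to\ker(\rho^i)\to 0$; it is $\Ima(\mr{sp}_i)\oplus\ker(\mr{sp}_i)$ for the specialization map $\mr{sp}_i$ into $H^i(\mc{G}(2,\mc{L})_\infty,\mb{Q})$, and these two decompositions genuinely differ. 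The first summand is $\Ima(\mr{sp}_i)=W_iH^i(\mc{G}(2,\mc{L})_\infty,\mb{Q})$ (the monodromy-invariant part, presented via the King--Newstead monomials $\psi_I^\infty$ with $2g\in I\Rightarrow g\in I$); it contains the \emph{entire} weight-$(i-1)$ piece $\mr{coker}(\rho^{i-1})$ as well as a \emph{quotient} of $\ker(\rho^i)$, so it is not $\ker\rho$. The second summand, carrying $X,Y$, is $\ker(\mr{sp}_i)=\Ima(f_i)$, the image of the Gysin morphism $H^{i-2}(\mc{G}_0\cap\mc{G}_1,\mb{Q})(-1)\to H^i(\mc{G}_0,\mb{Q})\oplus H^i(\mc{G}_1,\mb{Q})$: a subspace of $\ker(\rho^i)$, pure of weight $i$, with degree shift $2$. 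It is not $\mr{coker}(\rho)$ (the Mayer--Vietoris connecting map shifts degree by $1$, not $2$), and the relation $X-Y=0$ holds because $\xi_1\oplus-\xi_2$ spans the \emph{kernel of the Gysin morphism} (Theorem \ref{bet01}), not because $\rho$ surjects onto an antisymmetric combination. A degree count already breaks your matching: in degree $2$ one has $\mr{coker}(\rho^1)=0$ (since $H^1(\mc{G}_0\cap\mc{G}_1,\mb{Q})=0$), yet the second summand of the theorem is one-dimensional there, while $\ker(\rho^2)$ is two-dimensional and the first summand contributes only $\mb{Q}\alpha_\infty$.

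To repair the argument you would need to (i) identify $\bigoplus_iW_iH^i(\mc{G}(2,\mc{L})_\infty,\mb{Q})$ with $\bigoplus_i P_i^\infty\otimes\mb{Q}[\alpha_\infty,\beta_\infty,\psi_\infty]/I^\infty_{g-i}$ using the weight placement of $\psi_g^\infty$ and $\psi_{2g}^\infty$ from Theorems \ref{ntor11} and \ref{ner01}; and (ii) compute $\ker(\mr{sp}_i)\cong H^{i-2}(\mc{G}_0\cap\mc{G}_1,\mb{Q})/\ker(f_i)$ via Corollary \ref{ntor03}, Theorem \ref{bet01} and the Leray--Hirsch decomposition of the $\mb{P}^1\times\mb{P}^1$-bundle over $M_{\widetilde{X}_0}(2,\widetilde{\mc{L}}_0)$, then apply King--Newstead to that genus-$(g-1)$ moduli space. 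Your concern about the ring structure is legitimate but is aimed at the wrong splitting: for the correct decomposition, $\mr{sp}=\oplus_i\mr{sp}_i$ is a ring homomorphism, so $\oplus_i\ker(\mr{sp}_i)$ is automatically an ideal and the multiplicative compatibility of the two summands comes for free, rather than from a self-intersection computation of $\mc{G}_0\cap\mc{G}_1$.
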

 
 Similarly, we obtain the Hodge-Poincar\'{e} 
formula for $\mc{G}_{X_0}(2, \mc{L}_0)$ (see Theorem \ref{th:giespo}). Finally, 
there exists a proper morphism from $\mc{G}_{X_0}(2,\mc{L}_0)$ to $U_{X_0}(2,\mc{L}_0)$. Using this
  morphism we obtain from Theorem \ref{th:intro1}, the relations between the generators of the cohomology ring of
  $U_{X_0}(2,\mc{L}_0)$ and compute the Hodge-Poincar\'{e} polynomial for $U_{X_0}(2, \mc{L}_0)$.
  
 We remark that this article is part of a series of articles in which we study related but different questions pertaining to the moduli space of stable, rank 2 sheaves on an irreducible nodal curve (see \cite {indpre, DK2} for the first two published articles in the series). However, the results in all the articles are independent and overlap only in the background material. 

{\bf{Outline:}} In \S \ref{sec3}, we recall the preliminaries on the limit mixed Hodge structures applicable in our setup and use it to compute 
the limit mixed Hodge structure associated to the degenerating family $\pi$ of curves, mentioned above.
In \S \ref{sec2a}, we recall the relative Mumford-Newstead isomorphism as mentioned in \cite{indpre} which gives us an isomorphism (of mixed Hodge structures) between the limit mixed Hodge structure coming from $\pi$ and that coming from the associated family of moduli space of semi-stable sheaves.
In \S \ref{sec2b}, we compute a Gysin morphism to relate the cohomology ring of $\mc{G}_{X_0}(2,\mc{L}_0)$ to that of the generic 
fiber of the family of moduli spaces. In \S \ref{sec4}, we prove the generalized Mumford conjecture for $\mc{G}_{X_0}(2,\mc{L}_0)$.
In \S \ref{sec5}, we compute the Hodge-Poincar\'{e} polynomial for $\mc{G}_{X_0}(2,\mc{L}_0)$. In \S \ref{sec6}, 
we prove the generalized Mumford conjecture for $U_{X_0}(2,\mc{L}_0)$ and compute the associated Hodge-Poincar\'{e} polynomial.
  
{\bf{Notation:}} Given any morphism $f:\mc{Y} \to S$ and a point $s \in S$, we denote by $\mc{Y}_s:=f^{-1}(s)$.
 The open unit disc is denoted by $\Delta$ and $\Delta^*:=\Delta\backslash \{0\}$ denotes the punctured disc.
 
 \emph{Acknowledgements} 
We thank Prof. J. F. de Bobadilla and S. Basu for numerous discussions.
At the time of writing the article, the first author was supported by ERCEA Consolidator Grant $615655$-NMST and also
by the Basque Government through the BERC $2014-2017$ program and by Spanish
Ministry of Economy and Competitiveness MINECO: BCAM Severo Ochoa
excellence accreditation SEV-$2013-0323$ and the second author was funded by a fellowship from CNPq Brazil. Currently, the first 
author is funded by EPSRC grant number R/162871-11-1 and 
the second author is funded by a PNPD-CAPES fellowship, provided by PUC-Rio.

 \section{Preliminaries: Limit mixed Hodge structure}\label{sec3}
 
We recall basic results on limit mixed Hodge structures relevant to our setup. 
 See \cite{pet} for a detailed treatment of the subject.

 Let $\rho:\mc{Y} \to \Delta$ be a flat family of projective varieties, smooth over $\Delta^*$ and 
   $\rho':\mc{Y}_{\Delta^*} \to \Delta^*$ the restriction 
    of $\rho$ to $\Delta^*$. 
    
    \subsection{Hodge bundles}\label{nsec1}
     Using Ehresmann's theorem (see \cite[Theorem $9.3$]{v4}), we have for all $i \ge 0$, 
$\mb{H}_{\mc{Y}_{\Delta^*}}^i:=R^i{\rho}'_{*}\mb{Z}$
 the local systems over $\Delta^*$ with fiber $H^i(\mc{Y}_t,\mb{Z})$, for $t \in \Delta^*$.
 One can canonically associate to these local systems the holomorphic vector bundles 
 $\mc{H}_{\mc{Y}_{\Delta^*}}^i:=\mb{H}_{\mc{Y}_{\Delta^*}}^i \otimes_{\mb{Z}} \mo_{\Delta^*}$
 called the \emph{Hodge bundles}.
 There exist holomorphic sub-bundles $F^p\mc{H}_{\mc{Y}_{\Delta^*}}^i \subset \mc{H}_{\mc{Y}_{\Delta^*}}^i$
defined by the condition: for any $t \in \Delta^*$, the fibers \[\left(F^p\mc{H}_{\mc{Y}_{\Delta^*}}^i\right)_t \subset \left(\mc{H}_{\mc{Y}_{\Delta^*}}^i\right)_t\]
can be identified respectively with 
$F^pH^i(\mc{Y}_t,\mb{C}) \subset H^i(\mc{Y}_t,\mb{C})$, where $F^p$ denotes the Hodge filtration (see \cite[\S $10.2.1$]{v4}).
  
 \subsection{Canonical extension of Hodge bundles}
 The Hodge bundles and their holomorphic sub-bundles defined above can be extended to the entire disc. In particular,
 there exists a  \emph{canonical extension}, $\ov{\mc{H}}_{\mc{Y}}^i$ of 
 ${\mc{H}}_{\mc{Y}_{\Delta^*}}^i$ to $\Delta$ (see \cite[Definition $11.4$]{pet}).
 Note that $\ov{\mc{H}}_{\mc{Y}}^i$ is locally-free over $\Delta$. Denote by $j:\Delta^* \to \Delta$
 the inclusion morphism, 
 $F^p\ov{\mc{H}}_{\mc{Y}}^i:= j_*\left(F^p\mc{H}_{\mc{Y}_{\Delta^*}}^i\right) \cap  \ov{\mc{H}}_{\mc{Y}}^i$.
 Note that $F^p\ov{\mc{H}}_{\mc{Y}}^i$ is the \emph{unique largest} locally-free
 sub-sheaf of $\ov{\mc{H}}_{\mc{Y}}^i$ which extends $F^p\mc{H}_{\mc{Y}_{\Delta^*}}^i$.

 Consider the universal cover $\mf{h} \to \Delta^*$ of the punctured unit disc. 
 Denote by $e:\mf{h} \to \Delta^* \xrightarrow{j} \Delta$ the composed morphism and  
 $\mc{Y}_\infty$ the base change of the family $\mc{Y}$ over $\Delta$ to $\mf{h}$, by the morphism $e$.
   There is an explicit identification of the central fiber of the canonical extensions $\ov{\mc{H}}_{\mc{Y}}^i$ 
 and the cohomology group $H^i(\mc{Y}_{\infty},\mb{C})$, depending on the choice of the parameter $t$ on $\Delta$ (see \cite[XI-$8$]{pet}):
 \begin{equation}\label{tor23}
  g^i_{_t}:H^i(\mc{Y}_{\infty},\mb{C}) \xrightarrow{\sim} \left(\ov{\mc{H}}_{\mc{Y}}^i\right)_0.
 \end{equation}
 This induces (Hodge) filtrations on $H^i(\mc{Y}_{\infty},\mb{C})$ as
$F^pH^i(\mc{Y}_{\infty},\mb{C}):=(g_{_t}^i)^{-1}\left(F^p\ov{\mc{H}}_{\mc{Y}}^i\right)_0$.

\subsection{Monodromy transformations}
For any $s \in \Delta^*$ and $i \ge 0$, denote by 
\[T_{s,i}: H^i(\mc{Y}_s,\mb{Z}) \to H^i(\mc{Y}_s,\mb{Z}) \, \mbox{ and }\, T_{s,i}^{\mb{Q}}: H^i(\mc{Y}_s,\mb{Q}) \to H^i(\mc{Y}_s,\mb{Q})\]
the \emph{local monodromy transformations}
   defined by parallel transport along a counterclockwise loop about $0 \in \Delta$ (see \cite[\S $11.1.1$]{pet}).
 By \cite[Theorem II.$1.17$]{deli2}  (see also \cite[Proposition I.$7.8.1$]{kuli}) the automorphism extends to a $\mb{Q}$-automorphism 
\begin{equation}\label{int01}
 T_i: H^i(\mc{Y}_{\infty},\mb{Q}) \to H^i(\mc{Y}_{\infty},\mb{Q}).
\end{equation}
Denote by $T_{i,\mb{C}}$ the induced automorphism on $H^i(\mc{Y}_{\infty},\mb{C})$.

 \subsection{Schmid's limit mixed Hodge structures}
 The natural specialization morphism from the cohomology on the central fiber of the family $\mc{Y}$ to a general fiber $\mc{Y}_s$, $s \in \Delta^*$
 is not in general a morphism of mixed Hodge structures, if one considers the cohomology of $\mc{Y}_s$ with the natural pure Hodge structure.
 However, one can define a mixed Hodge structure on the cohomology of $\mc{Y}_s$, such that the specialization morphism is a morphism of mixed Hodge structures.
 More precisely,
  \begin{rem}\label{tor25}
  Let $N_i$ be the logarithm of the monodromy operator $T_i$.
  By \cite[Lemma-Definition $11.9$]{pet}, there exists an unique increasing \emph{monodromy weight filtration} $W_\bullet$ on $H^i(\mc{Y}_\infty,\mb{Q})$ such that,
 \begin{enumerate}
  \item  for $j \ge 2$, $N_i(W_jH^i(\mc{Y}_\infty,\mb{Q})) \subset W_{j-2}H^i(\mc{Y}_\infty,\mb{Q})$ and
  \item the map $N_i^l: \mr{Gr}^W_{i+l} H^i(\mc{Y}_\infty,\mb{Q}) \to \mr{Gr}^W_{i-l} H^i(\mc{Y}_\infty,\mb{Q})$ 
  is an isomorphism for all $l \ge 0$.
   \end{enumerate}
  Now, \cite[Theorem $6.16$]{schvar} states that the induced filtration on  $H^i(\mc{Y}_{\infty},\mb{C})$ defines 
  a mixed Hodge structure $(H^i(\mc{Y}_{\infty},\mb{Z}),W_\bullet,F^\bullet)$.
 \end{rem}
 
 When the central fiber $\mc{Y}_0$ is a reduced simple normal crossings divisor of $\mc{Y}$, 
we have the following description of the specialization morphism.
 \begin{rem}\label{ntor01}
 Suppose that the central fiber $\mc{Y}_0$ is a reduced simple normal crossings divisor of $\mc{Y}$.
 By the local invariant cycle theorem \cite[Theorem $11.43$]{pet}, we have the following exact sequence of mixed Hodge structure:
 \begin{equation}\label{t02}
  H^i(\mc{Y}_0,\mb{Q}) \xrightarrow{\mr{sp}_i} H^i(\mc{Y}_{\infty},\mb{Q}) \xrightarrow{N_i/(2\pi \sqrt{-1})} H^i(\mc{Y}_{\infty},\mb{Q})(-1)
 \end{equation}
 where $\mr{sp}_i$ denotes the specialization morphism.
\end{rem}   
 
 \subsection{Steenbrink's limit mixed Hodge structures}
It is not always easy to compute the monodromy weight filtration defined by Schmid. As a result we will use 
 the Steenbrink spectral sequences below. Note that we do not give the general form of the spectral sequence, instead we 
 restrict to the case relevant to this article.
  
  \begin{prop}[{\cite[Corollaries $11.23$ and $11.41$]{pet} and \cite[Example $3.5$]{ste1}}]\label{tor26}
  Suppose $\mc{Y}$ is regular and $\mc{Y}_0$ is a reduced simple normal crossings divisor of $\mc{Y}$, 
  consisting of exactly two irreducible components, say   $Y_1$ and $Y_2$.
 The \emph{limit weight spectral sequence} $^{^{\infty}}_{_W}E_1^{p,q} \Rightarrow H^{p+q}(\mc{Y}_\infty , \mb{Q})$ consists of the following terms:
 \begin{enumerate}
  \item if $|p| \ge 2$, then $^{^{\infty}}_{_W}E_1^{p,q}=0$,
  \item $^{^{\infty}}_{_W}E_1^{1,q}=H^q(Y_1 \cap Y_2,\mb{Q})(0)$, $^{^{\infty}}_{_W}E_1^{0,q}=H^q(Y_1,\mb{Q})(0) \oplus H^q(Y_2,\mb{Q})(0)$  and $^{^{\infty}}_{_W}E_1^{-1,q}=H^{q-2}(Y_1 \cap Y_2,\mb{Q})(-1)$,
  \item the differential map $d_1:\, ^{^{\infty}}_{_W}E_1^{0,q} \to \, ^{^{\infty}}_{_W}E_1^{1,q}$ is the restriction morphism and \[d_1:\, ^{^{\infty}}_{_W}E_1^{-1,q} \to \, ^{^{\infty}}_{_W}E_1^{0,q}\] is the Gysin morphism.
 \end{enumerate}
The limit weight spectral sequence $^{^{\infty}}_{_W}E_1^{p,q}$ degenerates at $E_2$
and the induced filtration on $H^{p+q}(\mc{Y}_\infty, \mb{Q})$ coincides with the monodromy weight filtration as in Remark \ref{tor25} above.
Similarly, the \emph{weight spectral sequence } $_{_W}E_1^{p,q} \Rightarrow H^{p+q}(\mc{Y}_0 , \mb{Q})$  on $\mc{Y}_0$ consists of the following terms:
 \begin{enumerate}
  \item for $p \ge 2$ or $p<0$, we have $_{_W}E_1^{p,q}=0$,
  \item $_{_W}E_1^{1,q}=H^q(Y_1 \cap Y_2,\mb{Q})(0)$ and $_{_W}E_1^{0,q}=H^q(Y_1,\mb{Q})(0) \oplus H^q(Y_2,\mb{Q})(0)$,
  \item the differential map $d_1:\, _{_W}E_1^{0,q} \to \, _{_W}E_1^{1,q}$ is the restriction morphism.
 \end{enumerate}
 The spectral sequence $_{_W}E_1^{p,q}$ degenerates at $E_2$ and induces a weight filtration on $H^{p+q}(\mc{Y}_0,\mb{Q})$.
 \end{prop}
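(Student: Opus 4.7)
The plan is to obtain both spectral sequences by specializing the general Steenbrink and Deligne weight spectral sequences, cited from \cite{pet, ste1}, to the case of a reduced simple normal crossings central fiber with exactly two irreducible components.

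For the limit weight spectral sequence, the general form in \cite[Corollary $11.23$]{pet} expresses $^{^{\infty}}_{_W}E_1^{p,q}$ as a direct sum of shifted cohomologies of the disjoint unions of $m$-fold intersections $Y_{i_1} \cap \cdots \cap Y_{i_m}$, with appropriate Tate twists determined by $p$. The key observation is that with $\mc{Y}_0 = Y_1 \cup Y_2$ all $m$-fold intersections for $m \geq 3$ are empty, so the only non-vanishing contributions come from $Y_1 \sqcup Y_2$ (the one-fold stratum) and $Y_1 \cap Y_2$ (the two-fold stratum). A direct bookkeeping from the general formula then shows that the $E_1$-page is concentrated in $p \in \{-1, 0, 1\}$: the $p=1$ column picks out $H^q(Y_1 \cap Y_2, \mb{Q})(0)$, the $p=0$ column picks out $H^q(Y_1, \mb{Q})(0) \oplus H^q(Y_2, \mb{Q})(0)$, and the $p=-1$ column picks out $H^{q-2}(Y_1 \cap Y_2, \mb{Q})(-1)$, matching the stated description. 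All other values of $p$ give zero automatically, by absence of higher strata. The identification of the $d_1$ differentials with the restriction (resp.~Gysin) morphisms is then read off from the definition of the differentials in Steenbrink's double complex, as worked out explicitly in \cite[Example $3.5$]{ste1}.

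The $E_2$-degeneration and the identification of the induced filtration on $H^{p+q}(\mc{Y}_\infty, \mb{Q})$ with the monodromy weight filtration of Remark \ref{tor25} are general statements from \cite{pet}, applicable directly here. The weight spectral sequence for the central fiber $\mc{Y}_0$ follows the same pattern: one specializes Deligne's weight spectral sequence for a simple normal crossings variety \cite[Corollary $11.41$]{pet} to the two-component situation, which again forces all terms with $p \geq 2$ (and $p < 0$) to vanish and leaves the two displayed terms, with $d_1$ being the restriction map. Degeneration at $E_2$ follows from strictness of morphisms of mixed Hodge structures.

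The only step requiring mild care is keeping track of sign conventions and Tate twists when specializing Steenbrink's double complex, in particular verifying the Tate twist $(-1)$ on the $p=-1$ column and the identification of $d_1$ as the Gysin morphism there; once the general formula is granted, the two-component case is pure bookkeeping and the rest is a direct appeal to the cited theorems.
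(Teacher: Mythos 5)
Your proposal is correct and matches the paper's treatment: the paper states this proposition purely as a citation of \cite[Corollaries 11.23 and 11.41]{pet} and \cite[Example 3.5]{ste1} with no written proof, and the specialization you describe (all triple intersections empty, so the general formula collapses to the columns $p\in\{-1,0,1\}$ with restriction and Gysin differentials) is exactly the bookkeeping those citations entail.
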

 
 We note that the resulting weight filtrations on $H^{p+q}(\mc{Y}_\infty , \mb{Q})$ and $H^{p+q}(\mc{Y}_0, \mb{Q})$ are given by:
 \[^{^{\infty}}_{_W}E_2^{p,q}=\mr{Gr}^W_q H^{p+q}(\mc{Y}_\infty , \mb{Q}) \mbox{ and } _{_W}E_2^{p,q}=\mr{Gr}^W_qH^{p+q}(\mc{Y}_0, \mb{Q}).\]
 
 \begin{cor}\label{ntor03}
  Let $\mc{Y}$ and $\mc{Y}_0$ be as in Proposition \ref{tor26}. Then, we have the following exact sequence of mixed Hodge structures:
  \begin{equation}\label{ntor02}
 H^{i-2}(Y_1 \cap Y_2, \mb{Q})(-1) \xrightarrow{f_i} H^i(\mc{Y}_0,\mb{Q}) \xrightarrow{\mr{sp}_i} H^i(\mc{Y}_\infty, \mb{Q}) \xrightarrow{g_i} \mr{Gr}^W_{i+1} H^i(\mc{Y}_\infty, \mb{Q}) \to 0,
  \end{equation}
where $f_i$ is the natural morphism induced by the Gysin morphism from $H^{i-2}(Y_1 \cap Y_2, \mb{Q})(-1)$ to $H^i(Y_1,\mb{Q}) \oplus H^i(Y_2, \mb{Q})$
(use the Mayer-Vietoris sequence associated to $Y_1 \cup Y_2$),
$\mr{sp}_i$ is the \emph{specialization morphism} (see \cite[Theorem $11.29$]{pet})
and $g_i$ is the natural projection.
 \end{cor}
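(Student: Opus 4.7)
The plan is to combine the local invariant cycle theorem recalled in Remark \ref{ntor01} with the explicit descriptions of the weight filtrations on $H^i(\mc{Y}_0, \mb{Q})$ and $H^i(\mc{Y}_\infty, \mb{Q})$ furnished by the two spectral sequences of Proposition \ref{tor26}.

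First I would prove exactness at $H^i(\mc{Y}_\infty, \mb{Q})$ together with the surjectivity of $g_i$. By Proposition \ref{tor26} the only weights occurring on $H^i(\mc{Y}_\infty, \mb{Q})$ lie in $\{i-1,\, i,\, i+1\}$. Since the monodromy logarithm $N_i$ has weight $-2$, and Remark \ref{tor25} gives $N_i: \mr{Gr}^W_{i+1} \to \mr{Gr}^W_{i-1}$ an isomorphism while $N_i$ vanishes on $\mr{Gr}^W_i$ and $\mr{Gr}^W_{i-1}$ (the shifted targets $\mr{Gr}^W_{i-2}$ and $\mr{Gr}^W_{i-3}$ both being zero), one deduces $\ker(N_i) = W_i H^i(\mc{Y}_\infty, \mb{Q})$ with quotient $\mr{Gr}^W_{i+1} H^i(\mc{Y}_\infty, \mb{Q})$. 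The local invariant cycle sequence \eqref{t02} identifies $\mr{im}(\mr{sp}_i) = \ker(N_i)$, yielding both the exactness at $H^i(\mc{Y}_\infty, \mb{Q})$ and the surjectivity of $g_i$.

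For exactness at $H^i(\mc{Y}_0, \mb{Q})$, I would use that $\mr{sp}_i$ is a morphism of mixed Hodge structures and examine the induced maps on graded pieces via Proposition \ref{tor26}. Denoting by $d_1^{\mathrm{res}}$ the restriction differential and by $d_1^{\mathrm{Gy}}$ the Gysin differential appearing in the limit weight spectral sequence, both $\mr{Gr}^W_{i-1} H^i(\mc{Y}_0, \mb{Q})$ and $\mr{Gr}^W_{i-1} H^i(\mc{Y}_\infty, \mb{Q})$ equal $\mr{coker}\bigl(H^{i-1}(Y_1, \mb{Q}) \oplus H^{i-1}(Y_2, \mb{Q}) \xrightarrow{d_1^{\mathrm{res}}} H^{i-1}(Y_1 \cap Y_2, \mb{Q})\bigr)$, with $\mr{Gr}^W_{i-1}(\mr{sp}_i)$ the identity. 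On the other hand, $\mr{Gr}^W_i H^i(\mc{Y}_0, \mb{Q}) = \ker(d_1^{\mathrm{res}})$ while $\mr{Gr}^W_i H^i(\mc{Y}_\infty, \mb{Q}) = \ker(d_1^{\mathrm{res}})/\mr{im}(d_1^{\mathrm{Gy}})$, and $\mr{Gr}^W_i(\mr{sp}_i)$ is the natural projection with kernel $\mr{im}(d_1^{\mathrm{Gy}})$. Since $\mr{sp}_i$ is an isomorphism on $W_{i-1}$, we get $\ker(\mr{sp}_i) \cap W_{i-1} = 0$, so $\ker(\mr{sp}_i)$ injects into $\mr{Gr}^W_i H^i(\mc{Y}_0, \mb{Q})$ with image $\mr{im}(d_1^{\mathrm{Gy}})$. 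The morphism $f_i$ is then obtained by composing the Gysin map $d_1^{\mathrm{Gy}}: H^{i-2}(Y_1 \cap Y_2, \mb{Q})(-1) \to H^i(Y_1, \mb{Q}) \oplus H^i(Y_2, \mb{Q})$, which lands in $\ker(d_1^{\mathrm{res}})$, with the Mayer-Vietoris identification $\ker(d_1^{\mathrm{res}}) \cong \mr{Gr}^W_i H^i(\mc{Y}_0, \mb{Q})$ and a lift into $W_i H^i(\mc{Y}_0, \mb{Q}) = H^i(\mc{Y}_0, \mb{Q})$; this yields a morphism whose image is precisely $\ker(\mr{sp}_i)$.

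The main technical obstacle is showing that $\mr{Gr}^W_i(\mr{sp}_i)$ really coincides with the natural projection $\ker(d_1^{\mathrm{res}}) \to \ker(d_1^{\mathrm{res}})/\mr{im}(d_1^{\mathrm{Gy}})$ under the two spectral-sequence identifications, which is not tautological and requires comparing the specialization morphism with the filtered log de Rham complexes computing both spectral sequences, along the lines of \cite[Chapter 11]{pet}.
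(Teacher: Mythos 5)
Your argument is correct and is essentially the paper's own proof: the paper disposes of the corollary as an immediate consequence of Proposition \ref{tor26}, and your write-up is the natural unpacking of that, reading off $\mr{Gr}^W_q(\mr{sp}_i)$ from the comparison of the two weight spectral sequences and using strictness to identify $\ker(\mr{sp}_i)$ with $\mr{im}(d_1^{\mathrm{Gy}})$ inside $\mr{Gr}^W_i H^i(\mc{Y}_0,\mb{Q})$. The only cosmetic difference is that you invoke the local invariant cycle theorem (Remark \ref{ntor01}) for exactness at $H^i(\mc{Y}_\infty,\mb{Q})$, whereas this also follows directly from the spectral sequences since $H^i(\mc{Y}_0,\mb{Q})$ has no weight $i+1$ part while $\mr{sp}_i$ is surjective onto $\mr{Gr}^W_{i-1}$ and $\mr{Gr}^W_i$; both routes use only material the paper records in \S\ref{sec3}, and the technical compatibility you flag at the end is indeed the one nontrivial input, supplied by \cite[Chapter 11]{pet}.
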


 \begin{proof}
  The corollary is an immediate consequence of Proposition \ref{tor26}.
 \end{proof}

 \subsection{The curve case}
 Consider the flat family $\rho:\widetilde{\mc{X}} \to \Delta$ of projective curves with $\widetilde{\mc{X}}$ regular, 
  $\widetilde{\mc{X}}_t$ smooth of genus $g$ for all $t \in \Delta^*$ and $\widetilde{\mc{X}}_0=Y_1 \cup Y_2$ with $Y_1 \cong \mb{P}^1$, 
  $Y_2$ smooth, irreducible and intersecting $Y_1$ transversally at two points, say $y_1, y_2$. 
 We compute the limit mixed Hodge structure associated to this family of curves.
  This description will be used later in the article
  to give the generators of the weight filtration on the cohomology ring of the moduli space of semi-stable
  sheaves with fixed determinant over an irreducible
  nodal curve.

 \begin{thm}\label{ntor11}
Denote by $f' \in H^2(Y_2,\mb{Z})$, the Poincar\'{e} dual of the fundamental class of $Y_2$ and 
  \[\mr{sp}_2:H^2(Y_1,\mb{Z}) \oplus H^2(Y_2, \mb{Z}) \cong H^2(\widetilde{\mc{X}}_0, \mb{Z}) \xrightarrow{\mr{sp}_2} H^2(\widetilde{\mc{X}}_\infty, \mb{Z})\]
  the specialization morphism as in Corollary \ref{ntor03} composed with the isomorphism arising from the Mayer-Vietoris sequence.
  Then, there exists a basis $e_1, e_2, ..., e_{2g}$ of 
  $H^1(\widetilde{\mc{X}}_\infty, \mb{Z})$ such that 
  \begin{enumerate}
   \item $e_g$ (resp. $e_{2g}$) generates $\mr{Gr}^W_0H^1(\widetilde{\mc{X}}_\infty, \mb{Q})$ (resp. $\mr{Gr}^W_2H^1(\widetilde{\mc{X}}_\infty, \mb{Q})$),
   \item $e_1, e_2, ..., e_{g-1}, e_{g+1}, e_{g+2},..., e_{2g-1}$ form a basis of $\mr{Gr}^W_1H^1(\widetilde{\mc{X}}_\infty, \mb{Q})$,
   \item $e_i \cup e_{i+g} = \mr{sp}_2(0 \oplus -f')$ and $e_i \cup e_j=0$ for $|j-i| \not= g$.
  \end{enumerate}
 \end{thm}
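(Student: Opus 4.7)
The strategy is to compute the graded pieces of the monodromy weight filtration on $H^1(\widetilde{\mc{X}}_\infty, \mb{Q})$ via the Steenbrink spectral sequence, use the ring-morphism property of the specialization to transport cup products from $H^*(\widetilde{\mc{X}}_0)$ to $H^*(\widetilde{\mc{X}}_\infty)$, and then exploit the freedom in lifting a $\mr{Gr}^W_2$-generator to achieve the required normalization.

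First I would apply Proposition \ref{tor26} with $Y_1 \cong \mb{P}^1$, $Y_2$ smooth of genus $g-1$ (so that $\widetilde{\mc{X}}_0$ has arithmetic genus $g$), and $Y_1 \cap Y_2 = \{y_1, y_2\}$. Using $H^1(Y_1) = 0$, $H^1(Y_1 \cap Y_2) = 0$ and $H^2(Y_1 \cap Y_2) = 0$, the $E_1$-page yields
\begin{align*}
\mr{Gr}^W_0 H^1(\widetilde{\mc{X}}_\infty, \mb{Q}) &\cong \mr{coker}\bigl(H^0(Y_1) \oplus H^0(Y_2) \to H^0(Y_1 \cap Y_2)\bigr) \cong \mb{Q}, \\
\mr{Gr}^W_1 H^1(\widetilde{\mc{X}}_\infty, \mb{Q}) &\cong H^1(Y_2, \mb{Q}), \\
\mr{Gr}^W_2 H^1(\widetilde{\mc{X}}_\infty, \mb{Q}) &\cong \ker\bigl(H^0(Y_1 \cap Y_2)(-1) \xrightarrow{\mr{Gysin}} H^2(Y_1) \oplus H^2(Y_2)\bigr) \cong \mb{Q}(-1).
\end{align*}
Fix $e_g$ lifting a generator of $\mr{Gr}^W_0$ and $e_{2g}$ lifting a generator of $\mr{Gr}^W_2$. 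Choose a symplectic basis $\{f_1, \ldots, f_{g-1}, f_{g+1}, \ldots, f_{2g-1}\}$ of $H^1(Y_2, \mb{Z})$ with $f_i \cup f_{i+g} = -f'$ and all other pairings zero, and lift to $e_i \in W_1 H^1(\widetilde{\mc{X}}_\infty, \mb{Q})$ via $\mr{Gr}^W_1 \cong H^1(Y_2)$.

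By Corollary \ref{ntor03} applied with $i=1$ (using $H^{-1}(Y_1 \cap Y_2) = 0$), $\mr{sp}_1$ is injective with image exactly $W_1 H^1(\widetilde{\mc{X}}_\infty, \mb{Q})$; therefore each $e_i$ for $i \in \{1, \ldots, 2g-1\}$ admits a lift $\hat{e}_i \in H^1(\widetilde{\mc{X}}_0, \mb{Q})$. Since the specialization is a ring homomorphism and $H^2(\widetilde{\mc{X}}_0, \mb{Q}) \cong H^2(Y_1) \oplus H^2(Y_2)$ by Mayer--Vietoris, products can be computed by restriction to each component. The restriction of $\hat{e}_i$ to $Y_1$ vanishes (as $H^1(\mb{P}^1) = 0$), and to $Y_2$ equals $f_i$ for $i \neq g$ and $0$ for $i = g$. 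Hence
\[
e_i \cup e_j \;=\; \mr{sp}_2(0, \, f_i \cup f_j) \qquad \text{for } i, j \in \{1, \ldots, 2g-1\},
\]
producing $e_g \cup e_j = 0$ for all $j \le 2g-1$, the vanishing of $e_i \cup e_j$ for $|j - i| \neq g$ with $i, j \neq g$, and $e_i \cup e_{i+g} = \mr{sp}_2(0, -f')$ for $i \in \{1, \ldots, g-1\}$.

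It remains to handle products involving $e_{2g}$. I use that $e_{2g}$ is defined only modulo $W_1$, so may be replaced by $e_{2g} + \sum_{i \neq g, 2g} c_i e_i$; this leaves its class in $\mr{Gr}^W_2$ and its pairing with $e_g$ unchanged (since $e_g$ pairs trivially with $W_1$), and modifies $e_j \cup e_{2g}$ by $\sum_i c_i(e_j \cup e_i)$. The matrix $(e_j \cup e_i)_{i, j \in \{1, \ldots, 2g-1\} \setminus \{g\}}$ is, up to the scalar $\mr{sp}_2(0, -f')$, the intersection matrix of the symplectic basis $\{f_i\}$ on $H^1(Y_2)$, hence invertible; consequently the system has a unique solution forcing $e_j \cup e_{2g} = 0$ for $j \neq g$. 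Non-degeneracy of the cup pairing on $H^1(\widetilde{\mc{X}}_\infty, \mb{Q})$ then forces $e_g \cup e_{2g} \neq 0$; since $H^2(\widetilde{\mc{X}}_\infty, \mb{Q})$ is pure of weight two and one-dimensional, a final rescaling of $e_{2g}$ arranges $e_g \cup e_{2g} = \mr{sp}_2(0, -f')$. Graded commutativity yields $e_{2g} \cup e_{2g} = 0$.

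The main obstacle is handling the products involving $e_{2g}$, since the transfer of the computation to $\widetilde{\mc{X}}_0$ does not apply to a class lying outside $\mr{Im}(\mr{sp}_1)$. Overcoming this requires combining non-degeneracy of the cup pairing, weight-purity of $H^2(\widetilde{\mc{X}}_\infty, \mb{Q})$, and the freedom in the choice of the lift $e_{2g}$ (modulo $W_1$ plus rescaling), with the dimensional counts and solvability of the linear system guaranteed by the non-degeneracy of the induced intersection form on $\mr{Gr}^W_1 \cong H^1(Y_2)$.
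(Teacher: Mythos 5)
Your proposal is correct and follows essentially the same route as the paper: identify $\mr{Gr}^W_1 H^1(\widetilde{\mc{X}}_\infty,\mb{Q})$ with $H^1(Y_2,\mb{Q})$ (you via the weight spectral sequence, the paper via Mayer--Vietoris on $\widetilde{\mc{X}}_0$ plus Corollary \ref{ntor03} — the same computation), transport a symplectic basis and its cup products through $\mr{sp}_1$ by restricting to the two components, and then normalize $e_{2g}$ modulo $W_1$ using invertibility of the symplectic intersection matrix and perfectness of the pairing. The only cosmetic differences are that the paper kills $e_g\cup e_j$ by purity of $H^2(\widetilde{\mc{X}}_\infty,\mb{Q})$ rather than by restriction, and writes the corrected $e_{2g}$ explicitly instead of solving the linear system abstractly; it also records $|a_g|=1$ from perfectness of the pairing, which is the precise fact guaranteeing your final rescaling preserves an integral basis.
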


 Before proving the theorem, we note
 that when we say ``$e_{i_1},...,e_{i_r}$ generate $\mr{Gr}^W_j H^1(\widetilde{\mc{X}}_\infty, \mb{Q})$'' we always mean that the image of 
 $e_{i_1},...,e_{i_r}$ in $\mr{Gr}^W_j H^1(\widetilde{\mc{X}}_\infty, \mb{Q})$ (under the natural projection morphism) generate it.
 
 \begin{proof}
 The Mayer-Vietoris sequence associated to the central fiber $\widetilde{\mc{X}}_0$ is
{\small \[0 \to H^0(\widetilde{\mc{X}}_0, \mb{Z}) \to H^0(Y_1, \mb{Z}) \oplus H^0(Y_2, \mb{Z}) \to H^0(Y_1 \cap Y_2, \mb{Z}) \to H^1(\widetilde{\mc{X}}_0, \mb{Z}) \to H^1(Y_1, \mb{Z}) \oplus H^1(Y_2, \mb{Z}) \to 0.\]}
Since $H^1(Y_1,\mb{Z})=0$, this gives us the short exact sequence:
 \begin{equation}\label{ner10}
 0 \to \mb{Z} \xrightarrow{p} H^1(\widetilde{\mc{X}}_0, \mb{Z}) \xrightarrow{q} H^1(Y_2,\mb{Z}) \to 0,
\end{equation}
 inducing isomorphisms $\mb{Q} \stackrel{p}{\cong} \mr{Gr}^W_0H^1(\widetilde{\mc{X}}_0, \mb{Q})$ and $\mr{Gr}^W_1H^1(\widetilde{\mc{X}}_0, \mb{Q}) \stackrel{q}{\cong} H^1(Y_2,\mb{Q})$.
 Since $\rho$ is a flat family of projective curves with $\rho^{-1}(t)$ of genus $g$ for $g \in \Delta^*$, we have 
  \[g=\rho_a(Y_1 \cup Y_2)=\rho_a(Y_1)+\rho_a(Y_2)+Y_1.Y_2-1=\rho_a(Y_2)+1,\]
  where $\rho_a$ denotes the arithmetic genus (use $\rho_a(Y_1)=0$). In other words, $\rho_a(Y_2)=g-1$.
  There exists a symplectic basis $e'_1,  e'_2, ..., e'_{g-1}, e'_{g+1}, e'_{g+2},..., e'_{2g-1}$ of $H^1(Y_2, \mb{Z})$
  such that $e'_i \cup e'_{i+g}=-f'$ and $e'_i \cup e'_j=0$ for $|i-j| \not= g$, where $f' \in H^2(Y_2, \mb{Z})$ is the dual of 
  fundamental class of $Y_2$ (see \cite[\S $1.2$]{carl}). Let $e_i'' \in H^1(\widetilde{\mc{X}}_0, \mb{Z})$ such that 
  $q(e_i'')=e_i'$. Denote by $e_i:=\mr{sp}_1(e_i'')$, where  
  \[\mr{sp}_1: H^1(\widetilde{\mc{X}}_0, \mb{Z}) \to H^1(\widetilde{\mc{X}}_\infty, \mb{Z}),\]
 is the specialization morphism. Since $\mr{sp}_1$ maps  $\mr{Gr}^W_1H^1(\widetilde{\mc{X}}_0, \mb{Z})$
 isomorphically to $\mr{Gr}^W_1H^1(\widetilde{\mc{X}}_\infty, \mb{Z})$ (Corollary \ref{ntor03}), we conclude that 
 $e_1, e_2, ..., e_{g-1}, e_{g+1}, e_{g+2},..., e_{2g-1}$ is a basis of $\mr{Gr}^W_1H^1(\widetilde{\mc{X}}_\infty, \mb{Q})$.
 
 Denote by $i_1:Y_1 \hookrightarrow \widetilde{\mc{X}}_0$ and $i_2:Y_2 \hookrightarrow \widetilde{\mc{X}}_0$ the natural inclusions.
 Since cup-product commutes with pull-back, we have $i_1^*(e_i'' \cup e_j'')=0$ for all $i, j$ (use $H^1(\mb{P}^1)=0$), $i_2^*(e_i'' \cup e_{i+g}'')=e'_i \cup e'_{i+g}=-f'$ and
 $i_2^*(e_i'' \cup e_j'')=e'_i \cup e'_j=0$ for any $|j-i| \not= g$. This implies $e_i \cup e_j=0$ for $|j-i| \not= g$ and $e_i \cup e_{i+g}=\mr{sp}_2(0 \oplus -f')$ under the morphism
 \begin{equation}\label{eq:bet09}
 H^2(Y_1, \mb{Z}) \oplus H^2(Y_2, \mb{Z}) \xleftarrow[\sim]{(i_1^*, i_2^*)} H^2(\widetilde{\mc{X}}_0, \mb{Z}) \xrightarrow{\mr{sp}_2} H^2(\widetilde{\mc{X}}_\infty, \mb{Z}),
 \end{equation}
 where the first isomorphism follows from the Mayer-Vietoris sequence.

 Denote by $e_g:=\mr{sp}_1 \circ p(1)$. Note that $e_g$ generates $W_0H^1(\widetilde{\mc{X}}_\infty,\mb{Q})$ (use Corollary \ref{ntor03}).
 Since the cup-product morphism 
\[H^1(\widetilde{\mc{X}}_\infty, \mb{Z}) \otimes H^1(\widetilde{\mc{X}}_\infty, \mb{Z}) \to H^2(\widetilde{\mc{X}}_\infty, \mb{Z})\]
is a morphism of mixed Hodge structures and $H^2(\widetilde{\mc{X}}_\infty, \mb{Q})$ is pure, we conclude that 
$e_g \cup e_i=0$ for all $1 \le i \le {2g-1}$.
Choose $e_{2g} \in H^1(\widetilde{\mc{X}}_\infty, \mb{Z})$ such that $e_1, e_2,...,e_{2g}$ generates $H^1(\widetilde{\mc{X}}_\infty, \mb{Z})$.
Let $e_i \cup e_{2g} = a_i\mr{sp}_2(0 \oplus -f')$. 
As cup-product is skew-symmetric, we have $a_{2g}=0$.
Moreover, as the cup-product morphism is 
 a perfect pairing, we have $|a_g|=1$. Replace $e_{2g}$ by 
\[\frac{1}{a_g}(e_{2g}-\sum\limits_{i<g} a_ie_{i+g}+\sum\limits_{i>g}^{2g-1} a_ie_{i-g}).\]
Note that $e_{2g} \in H^1(\widetilde{\mc{X}}_\infty, \mb{Z})$, generates $\mr{Gr}^W_2H^1(\widetilde{\mc{X}}_\infty, \mb{Q})$, $e_i \cup e_{2g}=0$ for $i \not= g$ and $e_g \cup e_{2g}=\mr{sp}_2(0 \oplus -f')$.
 This proves the theorem.
 \end{proof}

\section{Relative Gieseker moduli space and Mumford-Newstead isomorphism}\label{sec2a}

In this section we recall the relative Gieseker moduli space and review results on relative Mumford-Newstead isomorphisms as described in \cite[\S $4$]{indpre}.

 \begin{note}\label{tor33}
   Let $X_0$ be an irreducible nodal curve of genus $g \ge 2$, with exactly
 one node, say at $x_0$. Denote by $\pi_0:\widetilde{X}_0 \to X_0$ be the normalization map.
 Since the moduli space of stable curve
 is complete, there exists a regular, flat family of projective curves
  $\pi_1:\mc{X} \to \Delta$ smooth over 
   $\Delta^*$ and central fiber isomorphic to $X_0$ (see \cite[Theorem B$.2$]{bake}).
    Fix an invertible sheaf $\mc{L}$ on $\mc{X}$ of relative odd degree, say $d$. Set $\mc{L}_0:=\mc{L}|_{X_0}$, 
   the restriction of $\mc{L}$ to the central fiber.
   Denote by $\widetilde{\mc{L}}_0:=\pi_0^*(\mc{L}_0)$. Denote by $\mc{L}_s:=\mc{L}|_{\mc{X}_s}$
   for any $s \in \Delta$. For $s \in \Delta^*$, denote by $M_{\mc{X}_s}(2,\mc{L}_s)$
   the moduli space of rank $2$, semi-stable sheaves with determinant $\mc{L}_s$ over $\mc{X}_s$.
   By \cite[Corollary $4.5.5$]{huy}, $M_{\mc{X}_s}(2,\mc{L}_s)$ is non-singular for every 
   $s \in \Delta^*$.
      \end{note}

   \subsection{Relative Gieseker moduli space}\label{subsec1}
There exists a regular, flat, projective family   
  \[\pi_2:\mc{G}(2,\mc{L}) \to \Delta\]
  called the \emph{relative Gieseker moduli space of rank $2$ semi-stable sheaves on $\mc{X}$ with determinant 
  $\mc{L}$}, such that for all $s \in \Delta^*$, $\mc{G}(2,\mc{L})_s:=\pi_2^{-1}(s)=M_{\mc{X}_s}(2,\mc{L}_s)$ and the central fiber $\pi_2^{-1}(0)$, denoted 
  $\mc{G}_{X_0}(2,\mc{L}_0)$, is a reduced simple normal crossings divisor of $\mc{G}(2,\mc{L})$. See \cite[Theorem $2$]{sun1} and \cite[\S $6$]{tha} for the definition and 
  construction of the moduli space $\mc{G}(2,\mc{L})$.
  In particular, $\mc{G}(2,\mc{L})$ is smooth over $\Delta^*$ and satisfies the conditions of Proposition \ref{tor26}.
  
   Denote by $M_{\widetilde{X}_0}(2,\widetilde{\mc{L}}_0)$ the fine moduli space of semi-stable sheaves of rank $2$ and with determinant $\widetilde{\mc{L}}_0$ over $\widetilde{X}_0$ (see \cite[Theorem $4.3.7$ and $4.6.6$]{huy}).
  By \cite[\S $6$]{tha}, $\mc{G}_{X_0}(2,\mc{L}_0)$ can be written as the union of two irreducible components, say $\mc{G}_0$ and $\mc{G}_1$, and 
  $\mc{G}_1$ (resp. $\mc{G}_0 \cap \mc{G}_1$) is isomorphic to a $\p3$ (resp. $\mb{P}^1 \times \mb{P}^1$)-bundle over $M_{\widetilde{X}_0}(2,\widetilde{\mc{L}}_0)$.

 \subsection{Mumford-Newstead isomorphism in families}
 Let us consider the relative version of the construction in \cite{mumn}.
  Denote by  \[\mc{W}:=\mc{X}_{\Delta^*} \times_{\Delta^*} \mc{G}(2,\mc{L})_{\Delta^*}\, \mbox{ and }\, \pi_3: \mc{W} \to \Delta^*\] the natural morphism. Recall, for 
  all $t \in \Delta^*$, the fiber \[\mc{W}_t:=\pi_3^{-1}(t)=\mc{X}_t \times \mc{G}(2,\mc{L})_t \cong \mc{X}_t \times M_{\mc{X}_t}(2,\mc{L}_t).\] 
   There exists a (relative) universal bundle $\mc{U}$ over $\mc{W}$
  associated to the (relative) moduli space $\mc{G}(2,\mc{L})_{\Delta^*}$ i.e., $\mc{U}$ is a vector bundle over $\mc{W}$ such that for each $t \in \Delta^*$,
  $\mc{U}|_{\mc{W}_t}$ is the universal bundle over $\mc{X}_t \times M_{\mc{X}_t}(2,\mc{L}_t)$ associated to fine moduli space $M_{\mc{X}_t}(2,\mc{L}_t)$
  (use \cite[Theorem $9.1.1$]{pan}).
  Denote by $\mb{H}^4_{\mc{W}}:=R^4 \pi_{3_*} \mb{Z}_{\mc{W}}$ the local system associated to $\mc{W}$.
  Using K\"{u}nneth decomposition, we have (see \S \ref{nsec1} for notations)
  \begin{equation}
   \mb{H}^4_{\mc{W}}= \bigoplus\limits_i \left(\mb{H}^i_{{\mc{X}}_{\Delta^*}} \otimes \mb{H}^{4-i}_{\mc{G}(2,\mc{L})_{\Delta^*}}\right).
  \end{equation}
  Denote by $c_2(\mc{U})^{1,3} \in \Gamma\left(\mb{H}^1_{{\mc{X}}_{\Delta^*}} \otimes \mb{H}^{3}_{\mc{G}(2,\mc{L})_{\Delta^*}}\right)$ 
  the image of the second Chern class $c_2(\mc{U}) \in \Gamma(\mb{H}^4_{\mc{W}})$ under the natural projection from 
  $\mb{H}^4_{\mc{W}}$ to $\mb{H}^1_{{\mc{X}}_{\Delta^*}} \otimes \mb{H}^{3}_{\mc{G}(2,\mc{L})_{\Delta^*}}$.
  Using Poincar\'{e} duality applied to the local system $\mb{H}^1_{{\mc{X}}_{\Delta^*}}$ (see \cite[\S I.$2.6$]{kuli}), we have
\begin{equation}\label{ner08}
 \mb{H}^1_{{\mc{X}}_{\Delta^*}} \otimes \mb{H}^{3}_{\mc{G}(2,\mc{L})_{\Delta^*}} \stackrel{\mr{PD}}{\cong}\left(\mb{H}^1_{{\mc{X}}_{\Delta^*}}\right)^\vee \otimes \mb{H}^{3}_{\mc{G}(2,\mc{L})_{\Delta^*}} \cong \Hc\left(\mb{H}^1_{{\mc{X}}_{\Delta^*}}, \mb{H}^{3}_{\mc{G}(2,\mc{L})_{\Delta^*}}\right).
\end{equation}
  Therefore, $c_2(\mc{U})^{1,3}$ induces a homomorphism $\Phi_{\Delta^*}: \mb{H}^1_{\mc{X}_{\Delta^*}} \to \mb{H}^{3}_{\mc{G}(2,\mc{L})_{\Delta^*}}.$
  
  By \cite[Lemma $1$ and Proposition $1$]{mumn}, we conclude that the homomorphism $\Phi_{\Delta^*}$
  is an isomorphism such that the induced 
  isomorphism on the associated vector bundles:
  \[\Phi_{\Delta^*}:\mc{H}^1_{\mc{X}_{\Delta^*}} \xrightarrow{\sim} \mc{H}^{3}_{\mc{G}(2,\mc{L})_{\Delta^*}} \mbox{ satisfies } \Phi_{\Delta^*}(F^p\mc{H}^1_{\mc{X}_{\Delta^*}})= F^{p+1}\mc{H}^{3}_{\mc{G}(2,\mc{L})_{\Delta^*}} \mbox{ for all }p \ge 0.\]

 \subsection{Limit Mumford-Newstead isomorphism} 
 We now extend the isomorphism $\Phi_{\Delta^*}$ to the entire disc $\Delta$ and show that the induced morphism 
 on the central fibers is an isomorphism of limit mixed Hodge structures. We do this using the monodromy operator (see \S \ref{sec3}).
 In order to guarantee that the monodromy operator is unipotent, we want the central fibers of the relevant families of projective varieties
 to be reduced simple, normal crossings divisors. 
 The family $\pi_2$ of moduli spaces already satisfies this criterion.
 Unfortunately, the central fiber of $\pi_1$ is $X_0$, which is not a simple normal crossings divisor. 
 We can easily rectify this problem by blowing up $\mc{X}$
  at the point $x_0$.
  Denote by $\widetilde{\mc{X}}:=\mr{Bl}_{x_0}\mc{X}$ and by 
  \begin{equation}\label{eq:tor01}
   \widetilde{\pi}_1:\widetilde{\mc{X}} \to \mc{X} \xrightarrow{\pi_1} \Delta.
  \end{equation}
 Note that the central fiber of $\widetilde{\pi}_1$ is the union of two irreducible components, the normalization $\widetilde{X}_0$ of $X_0$ and 
  the exceptional divisor $F \cong \mb{P}^1_{x_0}$ intersecting $\widetilde{X}_0$ at the two points 
  over $x_0$.
 
 Let $\ov{\mc{H}}^1_{\mc{X}_{\Delta^*}}$ and $\ov{\mc{H}}^{3}_{\mc{G}(2,\mc{L})_{\Delta^*}}$ be the canonical extensions of $\mc{H}^1_{\mc{X}_{\Delta^*}}$
  and $\mc{H}^{3}_{\mc{G}(2,\mc{L})_{\Delta^*}}$, respectively. Then, the morphism $\Phi_{\Delta^*}$ extend to the entire disc:
  \[\widetilde{\Phi}:\ov{\mc{H}}_{\widetilde{\mc{X}}}^1 \xrightarrow{\sim} \ov{\mc{H}}^3_{\mc{G}(2,\mc{L})}.\]
  Using the identification \eqref{tor23} and restricting $\widetilde{\Phi}$ to the central fiber, we have an isomorphism:
  \begin{equation}\label{ner11}
   \widetilde{\Phi}_0 : H^1(\widetilde{\mc{X}}_\infty,\mb{Q}) \xrightarrow{\sim} H^3(\mc{G}(2,\mc{L})_\infty,\mb{Q}).
  \end{equation}
   Recall, $\widetilde{\Phi}_0$ is an isomorphism of mixed Hodge structures:
   
  \begin{thm}\label{ner01}
   For the extended morphism $\widetilde{\Phi}$, we have 
   $\widetilde{\Phi}(F^p\ov{\mc{H}}_{\widetilde{\mc{X}}}^1)= F^{p+1}\ov{\mc{H}}^3_{\mc{G}(2,\mc{L})}$  for  $p=0,1$  and  
   $\widetilde{\Phi}(\ov{\mb{H}}_{\widetilde{\mc{X}}}^1)=\ov{\mb{H}}^3_{\mc{G}(2,\mc{L})}$.
   Moreover, $\widetilde{\Phi}_0(W_iH^1(\widetilde{\mc{X}}_\infty,\mb{Q}))=W_{i+2}H^3(\mc{G}(2,\mc{L})_\infty,\mb{Q})$
   for all $i \ge 0$.
  \end{thm}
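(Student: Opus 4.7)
The plan rests on a single observation: $\Phi_{\Delta^*}$ is \emph{monodromy-equivariant}. Indeed, it is induced via \eqref{ner08} by the flat section $c_2(\mc{U})^{1,3}$ of the local system $\Hc(\mb{H}^1_{\mc{X}_{\Delta^*}}, \mb{H}^3_{\mc{G}(2,\mc{L})_{\Delta^*}})$, and a flat section of a Hom local system is exactly an intertwiner of the monodromy representations on the two factors. From this, all three assertions of the theorem will follow formally.

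First I would extend $\Phi_{\Delta^*}$ to the canonical extensions. Deligne's canonical extension is functorial in the monodromy data: any monodromy-equivariant morphism of local systems on $\Delta^*$ with quasi-unipotent monodromy extends uniquely to a morphism of the associated canonical extensions on $\Delta$. Applied to the rational local systems this yields $\widetilde{\Phi}(\ov{\mb{H}}^1_{\widetilde{\mc{X}}}) = \ov{\mb{H}}^3_{\mc{G}(2,\mc{L})}$, and tensoring with $\mo_\Delta$ produces the holomorphic extension $\widetilde{\Phi}:\ov{\mc{H}}^1_{\widetilde{\mc{X}}} \to \ov{\mc{H}}^3_{\mc{G}(2,\mc{L})}$, which is an isomorphism because $\Phi_{\Delta^*}$ is one and the two extensions have the same rank.

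For the preservation of the Hodge filtration I would exploit the maximality characterization $F^p\ov{\mc{H}}^i = j_*(F^p\mc{H}^i_{\Delta^*}) \cap \ov{\mc{H}}^i$ recalled earlier. Since $\Phi_{\Delta^*}(F^p\mc{H}^1_{\mc{X}_{\Delta^*}}) = F^{p+1}\mc{H}^3_{\mc{G}(2,\mc{L})_{\Delta^*}}$ for $p=0,1$ by the VHS statement already established, pushing forward by $j_*$ and intersecting with $\ov{\mc{H}}^3_{\mc{G}(2,\mc{L})}$ gives $\widetilde{\Phi}(F^p\ov{\mc{H}}^1_{\widetilde{\mc{X}}}) \subseteq F^{p+1}\ov{\mc{H}}^3_{\mc{G}(2,\mc{L})}$, and applying the same argument to $\widetilde{\Phi}^{-1}$ forces equality. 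For the weight filtration, the identification \eqref{tor23} and monodromy equivariance give $\widetilde{\Phi}_0 \circ T_1 = T_3 \circ \widetilde{\Phi}_0$ on the central fibers, hence $\widetilde{\Phi}_0 \circ N_1 = N_3 \circ \widetilde{\Phi}_0$ after taking logarithms. The monodromy weight filtration of Remark \ref{tor25} is uniquely determined by the nilpotent operator together with the underlying weight (which is $1$ for $H^1(\widetilde{\mc{X}}_\infty)$ and $3$ for $H^3(\mc{G}(2,\mc{L})_\infty)$), so this intertwining forces $\widetilde{\Phi}_0(W_iH^1(\widetilde{\mc{X}}_\infty,\mb{Q})) = W_{i+2}H^3(\mc{G}(2,\mc{L})_\infty,\mb{Q})$. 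The shift by $2$ is exactly the Tate twist $(-1)$ already visible in the shift by $1$ of the Hodge filtration.

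The main obstacle I anticipate is bookkeeping rather than substance: one must work with $\widetilde{\pi}_1$ from \eqref{eq:tor01} rather than $\pi_1$ itself, since only after the blow-up does the central fiber become a reduced simple normal crossings divisor so that the monodromy $T_1$ is unipotent and the limit mixed Hodge structure machinery of Remark \ref{tor25} and Proposition \ref{tor26} applies verbatim. Once this point is set up correctly, the theorem is a formal consequence of the flatness of $c_2(\mc{U})^{1,3}$ combined with the functoriality of Deligne's canonical extension and the uniqueness of the monodromy weight filtration.
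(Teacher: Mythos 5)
Your argument is correct and is essentially the proof the paper relies on: the paper itself defers this statement to \cite[Proposition $4.1$]{indpre}, and the proof there runs exactly along your lines — the monodromy-invariance (flatness) of $c_2(\mc{U})^{1,3}$ makes $\Phi_{\Delta^*}$ an intertwiner, Deligne's canonical extension and the maximality characterization of $F^p\ov{\mc{H}}^i$ give the statements over $\Delta$, and the uniqueness of the monodromy weight filtration centered at $1$ resp.\ $3$ forces the shift $W_i \mapsto W_{i+2}$. Your remark about replacing $\pi_1$ by $\widetilde{\pi}_1$ so that both central fibers are reduced simple normal crossings divisors (hence both monodromies unipotent) is indeed the only bookkeeping point, and you have handled it correctly.
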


  \begin{proof}
   See \cite[Proposition $4.1$]{indpre} for a proof of the statement.
 \end{proof}

 \section{Computing the kernel of the Gysin morphism}\label{sec2b}
 
 Notations as in Notation \ref{tor33} and \S \ref{subsec1}.
 The goal of this section is to compute the kernel of the Gysin morphism $f_i$ as in \eqref{ntor02}, in the case when the flat family $\rho$
is the relative Gieseker moduli space of rank $2$ semi-stable sheaves with fixed determinant associated to a degenerating family of 
smooth curves (Theorem \ref{bet01}).
 
 \subsection{Cohomology of the fibers of $P_0, \mc{G}_1$ and $\mc{G}_0 \cap \mc{G}_1$}\label{note:ner01}
 
 Recall the \emph{wonderful compactification} $\ov{\mr{SL}}_2 \subset \mb{P}^4 \cong \mb{P}(\mr{End}(\mb{C}^2) \oplus \mb{C})$ of $\mr{SL}_2$ consisting of points $[x_0:x_1:...:x_4]$ such that 
$x_0x_3-x_1x_2=x_4^2$ (see \cite[Definition $3.3.1$]{pezz} for the general definition of 
wonderful compactification). 
Denote by $j_1:\ov{\mr{SL}}_2 \hookrightarrow \mb{P}^4$ the natural inclusion 
as a quadric hypersurface.
 Recall that $\mc{G}_1$ (resp. $\mc{G}_0 \cap \mc{G}_1$) is a $\p3$ (resp. $\mb{P}^1 \times \mb{P}^1$)-bundle over $M_{\widetilde{X}_0}(2,\widetilde{\mc{L}}_0)$.
 Denote by 
\[\rho_1: \mc{G}_0 \cap \mc{G}_1 \longrightarrow M_{\widetilde{X}_0}(2,\widetilde{\mc{L}}_0) \, 
\mbox{ and } \rho_2: \mc{G}_1 \longrightarrow M_{\widetilde{X}_0}(2,\widetilde{\mc{L}}_0)\]
 the natural projections. 
 Recall by \cite[\S $6$]{tha} that there exists an $\ov{\mr{SL}}_2$-bundle $P_0$ over $M_{\widetilde{X}_0}(2,\widetilde{\mc{L}}_0)$
 and natural inclusions
  \[i_1: \mc{G}_0 \cap \mc{G}_1 \hookrightarrow P_0, \,  i_2: \mc{G}_0 \cap \mc{G}_1 \hookrightarrow \mc{G}_1
  \mbox{ and } i_3:\mc{G}_0 \cap \mc{G}_1 \hookrightarrow \mc{G}_0\]
  such that for the natural projection $\rho_3: P_0 \longrightarrow M_{\widetilde{X}_0}(2,\widetilde{\mc{L}}_0)$, we have
for any $y \in M_{\widetilde{X}_0}(2,\widetilde{\mc{L}}_0)$, identifying the fiber $\rho_1^{-1}(y)$
(resp. $\rho_2^{-1}(y)$, $\rho_3^{-1}(y)$) with $\mb{P}^1 \times \mb{P}^1$ (resp. $\p3, \ov{\mr{SL}}_2$), the natural inclusions 
$i_{1,y}$ and $i_{2,y}$, induced by $i_1$ and $i_2$ respectively, sit
in the 
following diagram:
\[\begin{diagram}
 \mb{P}^1 \times \mb{P}^1&\rTo^{i_{1,y}}&\ov{\mr{SL}}_2\\
 \dTo^{i_{2,y}}&\circlearrowleft&\dTo^{j_1}\\
  \p3&\rTo^{j_2}&\mb{P}^4
  \end{diagram}\]
where $j_2([x_0:...:x_3])=[x_0:...:x_3:0]$ and $i_{1,y}$ (resp. $i_{2,y}$) is the Segre embedding sending 
\[ [s:t] \times [u:v] \in \mb{P}^1 \times \mb{P}^1 \mbox{ to } [su:sv:tu:tv:0] \in \ov{\mr{SL}}_2\, \, (\mbox{resp. } [su:sv:tu:tv] \in \p3).\]

 Let $\xi_0$ be a generator of $H^2(\mb{P}^1, \mb{Q})$, $\pr_i$ the natural 
projections from $\mb{P}^1 \times \mb{P}^1$ to $\mb{P}^1$ and $\xi_i:=\pr_i^*(\xi_0)$.
Using the Künneth decomposition, we have 
\[H^1(\mb{P}^1 \times \mb{P}^1, \mb{Q})=0=H^3(\mb{P}^1 \times \mb{P}^1, \mb{Q}), H^0(\mb{P}^1 \times \mb{P}^1, \mb{Q})= \mb{Q},    
H^2(\mb{P}^1 \times \mb{P}^1, \mb{Q})= \mb{Q}\xi_1 \oplus \mb{Q}\xi_2\mbox{ and }\]
$H^4(\mb{P}^1 \times \mb{P}^1, \mb{Q})=\mb{Q} \xi_1.\xi_2$.
 Since $\ov{\mr{SL}}_2$ is a 
 quadric hypersurface in $\mb{P}^4$, the Lefschetz hyperplane section theorem implies that
  $H^{2i}(\ov{\mr{SL}}_2,\mb{Q}) \cong \mb{Q}$ for $0 \le i \le 3$ and 
  $H^1(\ov{\mr{SL}}_2,\mb{Q})=0=H^5(\ov{\mr{SL}}_2,\mb{Q})$.
 It is also known that $H^3(\ov{\mr{SL}}_2,\mb{Q})=0$.
 Denote by $\xi \in H^2(\mb{P}^4, \mb{Z})$ a generator, $\xi':=j_1^*(\xi)$ and $\xi'':=j_2^*(\xi)$.

 \subsection{Kernel of the Gysin morphisms}
 We now compute the kernel of the Gysin morphisms $i_{2,*}$ and $i_{3,*}$. The first step is to 
 determine the kernel of $i_{1,*}$ and $i_{2,*}$ (Proposition \ref{ner15}). This is done using the Leray-Hirsch theorem.

 \begin{prop}\label{ner15}
 We have, 
   \[\ker((i_{1,*},i_{2,*}):H^{i-2}(\mc{G}_0 \cap \mc{G}_1,\mb{Q})(-1) \to H^i(P_0,\mb{Q}) \oplus H^i(\mc{G}_1,\mb{Q})) \cong H^{i-4}(M_{\widetilde{X}_0}(2,\widetilde{\mc{L}}_0), \mb{Q})(\xi_1 \oplus -\xi_2).\]
   \end{prop}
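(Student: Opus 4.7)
The plan is to exploit the bundle structures of $P_0$, $\mc{G}_1$, and $\mc{G}_0 \cap \mc{G}_1$ over $M := M_{\widetilde{X}_0}(2, \widetilde{\mc{L}}_0)$, together with the Leray--Hirsch theorem, to reduce the computation of the kernel to a fiberwise calculation involving the inclusions $i_{1,y}: \mb{P}^1 \times \mb{P}^1 \hookrightarrow \ov{\mr{SL}}_2$ and $i_{2,y}: \mb{P}^1 \times \mb{P}^1 \hookrightarrow \p3$ described above.

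First I would verify that Leray--Hirsch applies to all three bundles. For the $\p3$-bundle $\rho_2$ and the $(\mb{P}^1 \times \mb{P}^1)$-bundle $\rho_1$ this is classical (the latter is a fiber product of two $\mb{P}^1$-bundles by the construction in \cite{tha}). For the $\ov{\mr{SL}}_2$-bundle $\rho_3 : P_0 \to M$, one uses that $P_0$ embeds fiberwise as a quadric hypersurface in a $\mb{P}^4$-bundle $\mc{P} \to M$ extending the ambient embedding $j_1$ fiber-by-fiber; the restriction to $P_0$ of the relative hyperplane class of $\mc{P}$ then furnishes a global class whose fiber restriction is $\xi'$, and whose powers together with $1$ give the required fiberwise basis. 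This yields isomorphisms of $H^*(M)$-modules $H^*(\mc{G}_0 \cap \mc{G}_1, \mb{Q}) \cong H^*(M,\mb{Q}) \otimes H^*(\mb{P}^1 \times \mb{P}^1, \mb{Q})$, $H^*(\mc{G}_1, \mb{Q}) \cong H^*(M,\mb{Q}) \otimes H^*(\p3, \mb{Q})$, and $H^*(P_0, \mb{Q}) \cong H^*(M,\mb{Q}) \otimes H^*(\ov{\mr{SL}}_2, \mb{Q})$. By the projection formula applied to the inclusions $i_1$ and $i_2$ (which preserve the base $M$ and restrict fiberwise to $i_{1,y}$ and $i_{2,y}$), the Gysin morphisms $i_{1,*}$ and $i_{2,*}$ decompose under these identifications as $\mr{id}_{H^*(M)} \otimes i_{1,y,*}$ and $\mr{id}_{H^*(M)} \otimes i_{2,y,*}$, so the joint kernel is the tensor product of $H^*(M,\mb{Q})$ with the joint fiberwise kernel.

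Next I would compute the fiber Gysin maps directly. Since $i_{2,y}$ embeds $\mb{P}^1 \times \mb{P}^1$ as the standard Segre quadric in $\p3$, one finds $i_{2,y,*}(1) = 2\xi''$, $i_{2,y,*}(\xi_1) = i_{2,y,*}(\xi_2) = \xi''^2$ (both rulings are lines in $\p3$), and $i_{2,y,*}(\xi_1 \xi_2) = \xi''^3$, so $\ker i_{2,y,*}$ equals $\mb{Q}(\xi_1 - \xi_2)$ in degree $2$ and vanishes in the remaining degrees. For $i_{1,y}$, the image is the hyperplane section $\{x_4 = 0\} \cap \ov{\mr{SL}}_2$, so $i_{1,y,*}(1) = \xi'$; the two rulings map to lines in the quadric threefold $\ov{\mr{SL}}_2$, both of class $\tfrac{1}{2}\xi'^2 \in H^4(\ov{\mr{SL}}_2, \mb{Q})$ (using $\xi'^3 = 2[pt]$), and $i_{1,y,*}(\xi_1\xi_2) = \tfrac{1}{2}\xi'^3$; hence $\ker i_{1,y,*} = \mb{Q}(\xi_1 - \xi_2)$ in degree $2$ and vanishes elsewhere. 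Intersecting, the joint fiber kernel is $\mb{Q}(\xi_1 - \xi_2) \subset H^2(\mb{P}^1 \times \mb{P}^1, \mb{Q})$; tensoring with $H^{i-4}(M, \mb{Q})$ (the unique K\"unneth summand of $H^{i-2}(\mc{G}_0 \cap \mc{G}_1)$ pairing with a degree-$2$ fiber class) yields the asserted isomorphism with $H^{i-4}(M_{\widetilde{X}_0}(2, \widetilde{\mc{L}}_0), \mb{Q})(\xi_1 \oplus -\xi_2)$.

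The principal obstacle is to cleanly justify the Leray--Hirsch decomposition for the $\ov{\mr{SL}}_2$-bundle $P_0$ and to verify the compatibility of the three bundle structures so that the Gysin morphisms respect the K\"unneth factorization; this is where one must invoke the explicit fibration data of \cite{tha}. Once that is in place, the rest of the argument is a bookkeeping calculation with the standard cohomology rings of the fibers.
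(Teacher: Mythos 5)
Your proposal is correct and follows essentially the same route as the paper: reduce via Leray--Hirsch and the projection formula to the fiberwise Gysin maps $(i_{1,y})_*$ and $(i_{2,y})_*$, whose common kernel is $\mb{Q}(\xi_1-\xi_2)$ in degree $2$ and zero otherwise. The only (minor) divergence is in how the Leray--Hirsch hypothesis is verified -- the paper invokes the Deligne--Blanchard degeneration of the Leray spectral sequence together with the simple connectivity of $M_{\widetilde{X}_0}(2,\widetilde{\mc{L}}_0)$ to get surjectivity of restriction to the fibers, rather than your relative-embedding argument -- and you supply the explicit fiber computations that the paper leaves as ``easy to check.''
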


   \begin{proof}
It is easy to check that,
\begin{align}\label{ner13}
 \ker(((i_{1,y})_*, (i_{2,y})_*): H^{2j}(\mb{P}^1 \times \mb{P}^1, \mb{Q}) \to H^{2j+2}(\ov{\mr{SL}}_2, \mb{Q}) \oplus H^{2j+2}(\p3, \mb{Q}))\, & \, \cong \mb{Q} \, \mbox{ if } j=1\\
 \, & \, = 0\, \mbox{ if } j \not= 1.\label{ner14}
\end{align}
By the Deligne-Blanchard theorem \cite{delibla} (the Leray spectral sequence
degenerates at $E_2$ for smooth families), we have 
{\small \[H^i(\mc{G}_0 \cap \mc{G}_1, \mb{Q}) \cong  \oplus_{_j} H^{i-j}(R^j \rho_{1,*}\mb{Q}),
 H^i(\mc{G}_1,\mb{Q}) \cong  \oplus_{_j} H^{i-j}(R^j \rho_{2,*}\mb{Q}) \mbox{ and } H^i(P_0,\mb{Q}) \cong \oplus_{_j} H^{i-j}(R^j \rho_{3,*}\mb{Q}).
\]}
Since $M_{\widetilde{X}_0}(2,\widetilde{\mc{L}}_0)$ is smooth and simply connected, the local systems
$R^j \rho_{1,*} \mb{Q}$, $R^j \rho_{2,*} \mb{Q}$ and $R^j \rho_{3,*} \mb{Q}$ are trivial.
Therefore, for any $y \in M_{\widetilde{X}_0}(2,\widetilde{\mc{L}}_0)$, the natural morphisms
\begin{align*}
  & H^i(\mc{G}_0 \cap \mc{G}_1, \mb{Q}) \twoheadrightarrow H^0(R^i \rho_{1,*} \mb{Q}) \to H^i((\mc{G}_0 \cap \mc{G}_1)_y,\mb{Q}),\\
  & H^i(\mc{G}_1, \mb{Q}) \twoheadrightarrow H^0(R^i \rho_{2,*} \mb{Q}) \to H^i(\mc{G}_{1,y},\mb{Q}) \mbox{ and }
   H^i(P_0, \mb{Q}) \twoheadrightarrow H^0(R^i \rho_{3,*} \mb{Q}) \to H^i(P_{0,y},\mb{Q}),
\end{align*}
are surjective. 
Then, Leray-Hirsch theorem (see \cite[Theorem $7.33$]{v4}), implies that for any closed point $y \in M_{\widetilde{X}_0}(2,\widetilde{\mc{L}}_0)$,
{\small \begin{align}
 H^i(\mc{G}_0 \cap \mc{G}_1, \mb{Q}) & \cong \bigoplus_{j \ge 0} (H^j((\mc{G}_0 \cap \mc{G}_1)_y,\, \mb{Q}) \otimes H^{i-j}(M_{\widetilde{X}_0}(2,\widetilde{\mc{L}}_0))) \nonumber \\
& \cong H^i(M_{\widetilde{X}_0}(2,\widetilde{\mc{L}}_0)) \oplus H^{i-2}(M_{\widetilde{X}_0}(2,\widetilde{\mc{L}}_0)) \otimes (\mb{Q}\xi_1 \oplus \mb{Q}\xi_2) \oplus H^{i-4}(M_{\widetilde{X}_0}(2,\widetilde{\mc{L}}_0)) \xi_1\xi_2, \label{ntor04}\\ 
H^i(\mc{G}_1, \mb{Q}) & \cong \bigoplus_{j \ge 0} (H^j(\mc{G}_{1,y},\, \mb{Q}) \otimes H^{i-j}(M_{\widetilde{X}_0}(2,\widetilde{\mc{L}}_0)))
 \cong \bigoplus_{j \ge 0} H^{i-2j}(M_{\widetilde{X}_0}(2,\widetilde{\mc{L}}_0)) \otimes (\xi'')^j \label{ntor05}\\
 H^i(P_0, \mb{Q}) & \cong \bigoplus_{j \ge 0} (H^j(P_{0,y},\, \mb{Q}) \otimes H^{i-j}(M_{\widetilde{X}_0}(2,\widetilde{\mc{L}}_0)))
 \cong \bigoplus_{j \ge 0} H^{i-2j}(M_{\widetilde{X}_0}(2,\widetilde{\mc{L}}_0)) \otimes (\xi')^j \label{ntor06}
\end{align}}
 Using the projection formula (see \cite[Lemma B$.26$]{pet}) and  the identifications described in the 
 Leray-Hirsch theorem (identifying certain cohomology classes with 
 their restrictions to $y$), we have for any 
 $\alpha \in H^{j-2}((\mc{G}_0 \cap \mc{G}_1)_y, \mb{Q})$ and $\beta \in H^{i-j-2}(M_{\widetilde{X}_0}(2,\widetilde{\mc{L}}_0), \mb{Q})$, 
 \[i_{1,*}(\alpha \cup \rho_1^*\beta)=i_{1,*}(\alpha \cup i_1^*\rho_3^*\beta)=(i_{1,y})_*\alpha \cup \rho_3^*\beta
  \mbox{ and } i_{2,*}(\alpha \cup \rho_1^*\beta)=i_{2,*}(\alpha \cup i_2^*\rho_2^*\beta)=(i_{2,y})_*\alpha \cup \rho_2^*\beta.
 \]
 Note that for $\beta \not= 0$, $(i_{1,y})_*\alpha \cup \rho_3^*\beta=0$ (resp. $(i_{2,y})_*\alpha \cup \rho_2^*\beta=0$) if and only if 
 $(i_{1,y})_*\alpha=0$ (resp. $(i_{2,y})_*\alpha=0$).
Using the decompositions above and \eqref{ner13}, \eqref{ner14}, this implies $\ker((i_{1,*},i_{2,*}))$ is isomorphic to 
$H^{i-4}(M_{\widetilde{X}_0}(2,\widetilde{\mc{L}}_0), \mb{Q})(\xi_1 \oplus -\xi_2)$.
This proves the proposition.
   \end{proof}

 \begin{thm}\label{bet01}
  The kernel of the Gysin morphism \[(i_{3,_*}, i_{2,_*}): H^{i-2}(\mc{G}_0 \cap \mc{G}_1, \mb{Q})(-1) \to H^i(\mc{G}_0,\mb{Q}) \oplus H^i(\mc{G}_1, \mb{Q})\]  is 
  isomorphic to $H^{i-4}(M_{\widetilde{X}_0}(2,\widetilde{\mc{L}}_0),\mb{Q})(\xi_1 \oplus -\xi_2)$. In particular, the kernel of the 
morphism $f_i$ from $H^{i-2}(\mc{G}_0 \cap \mc{G}_1, \mb{Q})(-1)$ to 
  $H^i(\mc{G}_{X_0}(2,\mc{L}_0), \mb{Q})$ induced by the 
  Gysin morphism $(i_{3,_*}, i_{2,_*})$ (use the Mayer-Vietoris
  sequence associated to $\mc{G}_0 \cup \mc{G}_1$) is isomorphic
  to $H^{i-4}(M_{\widetilde{X}_0}(2,\widetilde{\mc{L}}_0),\mb{Q})(\xi_1 \oplus -\xi_2)$.
 \end{thm}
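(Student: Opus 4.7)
The plan is to deduce the first statement from Proposition~\ref{ner15} by comparing the Gysin morphism $i_{3,*}$ with the already-computed $i_{1,*}$, and then to transfer the kernel computation from $(i_{3,*}, i_{2,*})$ to $f_i$ via Corollary~\ref{ntor03}. The key structural input is a natural morphism $\pi:\mc{G}_0 \to P_0$ over $M_{\widetilde{X}_0}(2,\widetilde{\mc{L}}_0)$ provided by the Gieseker construction \cite{gies, sun1, tha}, satisfying the compatibility $\pi \circ i_3 = i_1$. By functoriality of Gysin morphisms this gives $i_{1,*} = \pi_* \circ i_{3,*}$, hence
\[
\ker\bigl((i_{3,*},i_{2,*})\bigr) \;\subseteq\; \ker\bigl((i_{1,*},i_{2,*})\bigr) \;\cong\; H^{i-4}(M_{\widetilde{X}_0}(2,\widetilde{\mc{L}}_0),\mb{Q})(\xi_1 \oplus -\xi_2)
\]
by Proposition~\ref{ner15}.

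For the reverse inclusion, I would verify fibrewise that $(i_{3,y})_*(\xi_1-\xi_2) = 0$ in $H^4(\mc{G}_{0,y},\mb{Q})$ for every $y \in M_{\widetilde{X}_0}(2,\widetilde{\mc{L}}_0)$, using the compatibility $\pi_y \circ i_{3,y} = i_{1,y}$ together with the vanishing $(i_{1,y})_*(\xi_1-\xi_2) = 0$ recorded in \eqref{ner13}. A Leray--Hirsch decomposition analogous to that in the proof of Proposition~\ref{ner15}, now applied to the fibration $\mc{G}_0 \to M_{\widetilde{X}_0}(2,\widetilde{\mc{L}}_0)$, then globalizes this to the inclusion $H^{i-4}(M_{\widetilde{X}_0}(2,\widetilde{\mc{L}}_0),\mb{Q})(\xi_1 \oplus -\xi_2) \subseteq \ker\bigl((i_{3,*},i_{2,*})\bigr)$, establishing equality.

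For the ``in particular'' part, Corollary~\ref{ntor03} constructs $f_i$ from the Gysin morphism $(i_{3,*}, -i_{2,*})$ via the Mayer--Vietoris setup. Using the Steenbrink spectral sequence of Proposition~\ref{tor26}, one sees that the extra $d_1$-differential present in the limit weight spectral sequence for $\mc{G}(2,\mc{L})_\infty$, but absent from the weight spectral sequence for $\mc{G}_{X_0}(2,\mc{L}_0)$, controls precisely $\ker(\mr{sp}_i) = \mr{Im}(f_i)$; this identifies $\mr{Im}(f_i)$ with $\mr{Im}\bigl((i_{3,*},i_{2,*})\bigr)$ and hence $\ker(f_i) = \ker\bigl((i_{3,*},i_{2,*})\bigr)$, yielding the claim.

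The principal obstacle is the fibre-wise vanishing $(i_{3,y})_*(\xi_1-\xi_2)=0$: this demands explicit control over the fibre $\mc{G}_{0,y}$ and over the inclusion $i_{3,y}$, which must be extracted from the Gieseker--Sun construction (cf.\ \cite{gies, sun1, tha}) but is not spelled out in the excerpt.
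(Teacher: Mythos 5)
Your argument hinges on the existence of a morphism $\pi\colon \mc{G}_0 \to P_0$ over $M_{\widetilde{X}_0}(2,\widetilde{\mc{L}}_0)$ satisfying $\pi\circ i_3 = i_1$, and this is where the proposal breaks down. No such morphism is available: $P_0$ and $\mc{G}_0$ are only \emph{birational} --- by \cite[\S $6$]{tha} there are closed subschemes $Z\subset P_0$ and $Z'\subset\mc{G}_0$ with $P_0\setminus Z\cong \mc{G}_0\setminus Z'$, and the paper has to pass to a smooth common resolution $W$ with proper birational morphisms $\tau_1\colon W\to P_0$ and $\tau_2\colon W\to\mc{G}_0$ precisely because neither space maps to the other. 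Relatedly, $\mc{G}_0$ is \emph{not} a fibre bundle over $M_{\widetilde{X}_0}(2,\widetilde{\mc{L}}_0)$: it contains the open locus $\mc{G}_{X_0}(2,\mc{L}_0)^0$ of locally free sheaves on $X_0$, which dominates $U_{X_0}(2,\mc{L}_0)^0$ rather than the moduli space on the normalization; only $\mc{G}_1$, $\mc{G}_0\cap\mc{G}_1$ and $P_0$ carry such bundle structures. So the fibrewise computation of $(i_{3,y})_*$ and the Leray--Hirsch globalization you propose for $\mc{G}_0\to M_{\widetilde{X}_0}(2,\widetilde{\mc{L}}_0)$ have no fibration to run on. Finally, even granting a morphism $\pi$ with $i_{1,*}=\pi_*\circ i_{3,*}$, the vanishing $(i_{1,y})_*(\xi_1-\xi_2)=0$ would only place $(i_{3,y})_*(\xi_1-\xi_2)$ in $\ker\bigl((\pi_y)_*\bigr)$, not force it to vanish --- the implication runs the wrong way for the reverse inclusion, which you yourself flag as the principal obstacle.

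The paper's actual mechanism is worth internalizing: since $\mathrm{Im}(i_1)\cap Z=\emptyset=\mathrm{Im}(i_3)\cap Z'$, both $i_1$ and $i_3$ factor through a common closed immersion $l\colon \mc{G}_0\cap\mc{G}_1\hookrightarrow W$, and one shows $\tau_1^*\, i_{1,*}(\xi)=l_*(\xi)=\tau_2^*\, i_{3,*}(\xi)$ using the relative cohomology sequence together with $\tau_{j,*}\tau_j^*=\mathrm{id}$. Injectivity of $\tau_j^*$ for a proper birational morphism of smooth projective varieties then gives $\ker(i_{1,*})=\ker(i_{3,*})$ as subspaces of $H^{i-2}(\mc{G}_0\cap\mc{G}_1,\mb{Q})$, and the first claim follows from Proposition~\ref{ner15}. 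Your treatment of the ``in particular'' step --- identifying $\ker(f_i)$ with $\ker\bigl((i_{3,*},i_{2,*})\bigr)$ via the Mayer--Vietoris identification and purity of the source --- is essentially right and consistent with the paper, but it cannot rescue the main computation.
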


  \begin{proof}
  By \cite[\S $6$]{tha}, there exist closed subschemes $Z \subset P_0$ and $Z' \subset \mc{G}_0$ 
  such that $P_0 \backslash Z \cong \mc{G}_0 \backslash Z'$.
  Using \cite{gies} (see also \cite[P. $27$]{tha} or \cite[Remark $6.5$(c), Theorem $6.2$]{sesh4}), one can observe that 
  $Z \cap \Ima(i_1)=\emptyset=Z' \cap \Ima(i_3)$ 
  and there exists a smooth, projective variety $W$ along with proper, birational morphisms $\tau_1:W \to P_0$ and $\tau_2:W \to \mc{G}_0$ such that 
  \[W \backslash \tau_1^{-1}(Z) \cong P_0 \backslash Z \cong \mc{G}_0 \backslash Z' \cong W \backslash \tau_2^{-1}(Z').\]
 Therefore, there exists a natural closed immersion $l:\mc{G}_0 \cap \mc{G}_1 \to W$ such that $i_1=\tau_1 \circ l$ and $i_3=\tau_2 \circ l$.
 We claim that, given any $\xi \in H^{k-2}(\mc{G}_0 \cap \mc{G}_1,\mb{Q})$, we have $\tau_1^* \circ i_{1,*}(\xi)=l_*(\xi)=\tau_2^* \circ i_{3,*}(\xi)$.
  Indeed, since $\Ima(i_1)$ (resp. $\Ima(i_3)$) does not intersect $Z$ (resp. $Z'$), the pullback of $l_*(\xi)$ to $\tau_1^{-1}(Z)$ and $\tau_2^{-1}(Z')$ vanish.
  Using the (relative) cohomology exact sequence (see \cite[Proposition $5.54$]{pet}), we conclude that there exists $\beta_1 \in H^k(P_0)$ and $\beta_2 \in H^k(\mc{G}_0)$
  such that $\tau_1^*(\beta_1)=l_*(\xi)=\tau_2^*(\beta_2)$. Applying $\tau_{1,*}$ and $\tau_{2,*}$ to the two equalities respectively and using 
  \cite[Proposition B$.27$]{pet}, we get 
  \[\beta_1=\tau_{1,*}\tau_1^*(\beta_1)=\tau_{1,*}l_*(\xi)=i_{1,*}(\xi) \mbox{ and } \beta_2=\tau_{2,*}\tau_2^*(\beta_2)=\tau_{2,*}l_*(\xi)=i_{3,*}(\xi).\]
  In other words, $\tau_1^* \circ i_{1,*}(\xi)=l_*(\xi)=\tau_2^* \circ i_{3,*}(\xi)$. This proves the claim. 
Using \cite[Theorem $5.41$]{pet}, we then conclude that $i_{1,*}(\xi)=0$ (resp. $i_{3,*}(\xi)=0$) if and only if $l_*(\xi)=0$. In other words, 
   $\ker(i_{1,*}) \cong \ker(i_{3,*})$. 
  Using Proposition \ref{ner15}, we have that $\ker(f_i) \cong H^{i-4}(M_{\widetilde{X}_0}(2,\widetilde{\mc{L}}_0),\mb{Q})(\xi_1 \oplus -\xi_2)$.
 This proves the theorem.
   \end{proof}

 \section{On a conjecture of Mumford}\label{sec4}
 In Theorem \ref{th:bet1}, we give a complete set of relations between the generators of the cohomology ring of the moduli space of 
 rank $2$ semi-stable sheaves with fixed determinant over an irreducible nodal curve, analogous to a 
 classical conjecture of Mumford as proved in \cite{kirw}. We use notations as in Notation \ref{tor33} and \S \ref{subsec1}. 
 
 \subsection{Generators of the cohomology ring $H^*(\mc{G}(2,\mc{L})_\infty, \mb{Q})$}\label{subsecgen}
  We first use the classical result of Newstead \cite{new1}
 to determine the generators of the cohomology ring $H^*(\mc{G}(2,\mc{L})_\infty, \mb{Q})$.
 Let $\psi^\infty_i:=\widetilde{\Phi}_0(e_i)$, where $e_i \in H^1(\widetilde{\mc{X}}_\infty, \mb{Z})$ as in Theorem \ref{ntor11}, $1 \le i \le 2g$
and $\widetilde{\Phi}_0$ as in \eqref{ner11}.
    Fix $s \in \Delta^*$.
 Let $\psi_{i,s} \in H^3(\mc{G}(2,\mc{L})_s, \mb{Z})$ the image of $\psi_i^\infty$ under the natural isomorphism 
 \begin{equation}\label{ntor15}
  \eta_j: H^j(\mc{G}(2,\mc{L})_\infty, \mb{Z}) \xrightarrow{\sim} H^j(\mc{G}(2,\mc{L})_s, \mb{Z}), \mbox{ for all } j \ge 0.
 \end{equation}
 Using \cite[Theorem $1$]{new1}, one can observe that for any $s \in \Delta^*$, 
 there exist elements \[\alpha_s \in H^{1,1}(\mc{G}(2,\mc{L})_s, \mb{Z}) \mbox{ and } \beta_s \in 
  H^{2,2}(\mc{G}(2,\mc{L})_s, \mb{Z})\] such that the cohomology ring $H^*(\mc{G}(2,\mc{L})_s, \mb{Q})$   is generated by 
  $\alpha_s, \beta_s, \psi_{1,s}, \psi_{2,s}, ..., \psi_{2g,s}$.
   Let \[\alpha_\infty \in H^2(\mc{G}(2,\mc{L})_\infty, \mb{Z}) \mbox{ and } \beta_\infty \in H^4(\mc{G}(2,\mc{L})_\infty, \mb{Z})\]
 the preimage of $\alpha_s$ and  $\beta_s$, respectively, under the natural isomorphism \eqref{ntor15}.
  It is immediate that the cohomology ring $H^*(\mc{G}(2,\mc{L})_\infty, \mb{Q})$ is generated by 
  $\alpha_\infty, \beta_\infty, \psi_{1}^\infty, \psi_{2}^\infty, ..., \psi_{2g}^\infty$.

  \subsection{Relations on the cohomology rings $H^*(M_{\widetilde{X}_0}(2, \widetilde{\mc{L}}_0), \mb{Q})$ and $H^*(\mc{G}(2,\mc{L})_s, \mb{Q})$}\label{subsecrel}
  Denote by $\psi_i:=\Phi_1(\psi_i^{\infty})$ for $1 \le i \le g-1$
   and $\psi_i:=\Phi_1(\psi_{i+1}^{\infty})$ for  $g \le i \le 2g-2$.
   Let $\alpha \in H^2(M_{\widetilde{X}_0}(2,\widetilde{\mc{L}}_0), \mb{Z})$ (resp. $\beta \in H^2(M_{\widetilde{X}_0}(2,\widetilde{\mc{L}}_0), \mb{Z})$) such that $\alpha$ (resp. $\alpha^2$ and $\beta$) 
   generates $H^2(M_{\widetilde{X}_0}(2,\widetilde{\mc{L}}_0), \mb{Q})$
   (resp. $H^4(M_{\widetilde{X}_0}(2,\widetilde{\mc{L}}_0), \mb{Z})$).
 Denote by $\psi_\infty:=\sum_{i=1}^g \psi_i^\infty\psi_{i+g}^\infty$, \[  \psi:=\sum_{i=1}^{g-1} \psi_i\psi_{i+g} \mbox{ and } P_i:=\ker\left(\psi^{g-i}: \bigcup\limits_{j=1}^i H^3(M_{\widetilde{X}_0}(2,\widetilde{\mc{L}}_0), \mb{Q}) \to \bigcup\limits_{j=1}^{2g-i} H^3(M_{\widetilde{X}_0}(2,\widetilde{\mc{L}}_0), \mb{Q})\right).
           \]
Given an ordered set $(i_1,...,i_k)=I$ with $1 \le i_1<i_2<...<i_k \le 2g$, denote by $\psi_I^\infty:=\psi_{i_1}^\infty... \psi_{i_k}^\infty$. 
For $i \le g$, denote by $P_i^{\infty}$ the $\mb{Q}$-vector space generated by elements of the form
\[\sum\limits_I a_I\psi_I^\infty \in \ker\left(\psi^{g-i+1}_\infty: \bigcup\limits_{j=1}^i H^3(\mc{G}(2,\mc{L})_\infty, \mb{Q}) \to \bigcup\limits_{j=1}^{2g-i+2} H^3(\mc{G}(2,\mc{L})_\infty, \mb{Q})\right),\, \, a_I \in \mb{Q},\]
such that for all  $I$, if $2g \in I$  then  $g \in I$.
Define the ideals $I_k \subset \mb{Q}[\alpha,\beta,\psi]$ and 
$I_k^\infty \subset \mb{Q}[\alpha_\infty,\beta_\infty,\psi_\infty]$
generated by $(\zeta_k, \zeta_{k+1}, \zeta_{k+2})$ and $(\zeta_k^\infty, \zeta_{k+1}^\infty, \zeta_{k+2}^\infty)$ respectively, 
  where $\zeta_i$ and $\zeta_i^\infty$ are recursively defined by $\zeta_0=1, \zeta_0^\infty=1, \zeta_i=0=\zeta_i^\infty$ for $i<0$, 
  \[\zeta_{k+1}:=\alpha\zeta_k+k^2\beta\zeta_{k-1}+2k(k-1)\psi\zeta_{k-2} \mbox{ and }
  \zeta_{k+1}^\infty:=\alpha_\infty\zeta_k^\infty+k^2\beta_\infty\zeta_{k-1}^\infty+2k(k-1)\psi_\infty\zeta_{k-2}^\infty.\] 
Using \cite[Theorem $3.2$]{kingn}, the natural morphism $\nu_0$ from $\bigoplus\limits_{k=0}^{g-1} P_k \otimes \mb{Q}[\alpha, \beta, \psi]$
to $H^*(M_{\widetilde{X}_0}(2, \widetilde{\mc{L}}_0), \mb{Q})$ is surjective with kernel $\oplus_k P_k \otimes I_{g-k-1}$.
  Denote by \[\psi_s:=\sum_{i=1}^{g} \psi_{i,s}\psi_{i+g,s} \mbox{ and } P_i^{(s)}:=\ker\left(\psi_s^{g-i+1}: \bigcup\limits_{j=1}^i H^3(\mc{G}(2,\mc{L})_s, \mb{Q}) \to \bigcup\limits_{j=1}^{2g-i+2} H^3(\mc{G}(2,\mc{L})_s, \mb{Q})\right).\]
Similarly as before, the obvious map
\[\nu: \bigoplus\limits_{k=0}^g P_k^{(s)} \otimes \mb{Q}[\alpha_s, \beta_s, \psi_s] \to H^*(\mc{G}(2,\mc{L})_s,\mb{Q})\]
is surjective with kernel isomorphic to $\oplus P_k^{(s)} \otimes I_{g-k,s}$, where $I_{g-k,s}$ is defined identically as $I^\infty_{g-k}$ above,
after replacing $\alpha_\infty, \beta_\infty$ and $\psi_\infty$ by $\alpha_s, \beta_s$ and $\psi_s$, respectively.

\subsection{Comparing pure Hodge structures on $\mbf{\mr{Gr}^W_3 H^3(\mc{G}(2,\mc{L})_\infty, \mb{Q})}$ and $\mbf{H^3(M_{\widetilde{X}_0}(2,\widetilde{\mc{L}}_0), \mb{Q})}$}
  The cohomology ring of $M_{\widetilde{X}_0}(2,\widetilde{\mc{L}}_0)$ will also play an important role in this section and the next. Recall,
  \cite[Proposition $1$]{mumn} states that there exists an isomorphism of pure Hodge structures:
 \[\Phi_0':H^1(\widetilde{X}_0, \mb{Z}) \xrightarrow{\sim} H^3(M_{\widetilde{X}_0}(2,\widetilde{\mc{L}}_0), \mb{Z}).\]
  Using the short exact sequence \eqref{ner10} and Theorem \ref{ner01}, we have the composed morphism 
  \[\Phi_1: \mr{Gr}^W_3 H^3(\mc{G}(2,\mc{L})_\infty, \mb{Q}) \to H^3(M_{\widetilde{X}_0}(2,\widetilde{\mc{L}}_0), \mb{Q})  \mbox{ defined by }\]
    {\small \[\mr{Gr}^W_3 H^3(\mc{G}(2,\mc{L})_\infty, \mb{Q})
    \xleftarrow[\sim]{\widetilde{\Phi}_0}\mr{Gr}^W_1H^1(\widetilde{\mc{X}}_\infty, \mb{Q}) \xleftarrow[\sim]{\mr{sp}_1}  
    \mr{Gr}^W_1H^1(\widetilde{\mc{X}}_0,\mb{Q}) \xrightarrow[\sim]{q}
    H^1(\widetilde{X}_0,\mb{Q}) \xrightarrow[\sim]{\Phi_0'} 
       H^3(M_{\widetilde{X}_0}(2,\widetilde{\mc{L}}_0), \mb{Q}),
  \]}
where the first isomorphism is given by \eqref{ner11}, the second and third isomorphisms are given in the proof of Theorem \ref{ntor11}.
  By Theorem \ref{ner01}, $\widetilde{\Phi}_0$ is an isomorphism of pure Hodge structures. Also, note that the last three morphisms in the 
  composed morphism $\Phi_1$ are morphisms of pure Hodge structures. Therefore, $\Phi_1$ is an isomorphism of pure Hodge structures.
  
  \subsection{Generalized Mumford's conjecture on $H^*(\mc{G}_{X_0}(2,\mc{L}_0), \mb{Q})$}
 We briefly discuss the idea of the proof of Theorem \ref{th:bet1} below. Combining \eqref{ntor02} with 
 Theorem \ref{bet01}, we prove that $\oplus_i \ker(\mr{sp}_i)$ is generated as a polynomial ring over 
 $H^*(M_{\widetilde{X}_0}(2,\widetilde{\mc{L}}_0), \mb{Q})$ by two variables $X$ and $Y$ satisfying $X^2=Y^2=X-Y=0$.
 Then, \S \ref{subsecrel} gives us the relations between the generators of $\oplus_i \ker(\mr{sp}_i)$.
 To obtain the relations between the generators of $\oplus_i \Ima(\mr{sp}_i)$, we use the isomorphism $\Phi_1$ above, 
 along with the description of the generators of $H^i(\mc{G}(2,\mc{L})_s,\mb{Q})$ for all $i \ge 0$, as given in \cite[Remark $5.3$]{kingn}.
 The theorem would then follow immediately.
 
  \begin{thm}\label{th:bet1}
 We have the following isomorphism of \emph{graded rings}:
 \[H^*(\mc{G}_{X_0}(2,\mc{L}_0),\mb{Q}) \cong \left(\bigoplus\limits_i P_i^\infty \otimes \frac{\mb{Q}[\alpha_\infty, \beta_\infty, \psi_\infty]}{I^\infty_{g-i}}\right) \oplus \left(\bigoplus\limits_i P_{i-2} \otimes \frac{\mb{Q}[\alpha, \beta, \psi, X, Y]}{(I_{g-i-3}, X^2, Y^2, X-Y)}\right).\]
 (Note that the multiplicative identity of the ring $H^*(\mc{G}_{X_0}(2,\mc{L}_0),\mb{Q})$ lies in the first summand.)
 \end{thm}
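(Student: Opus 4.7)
The plan is to analyze the specialization exact sequence of Corollary \ref{ntor03} applied to the family $\pi_2: \mc{G}(2,\mc{L}) \to \Delta$, whose central fiber $\mc{G}_{X_0}(2,\mc{L}_0) = \mc{G}_0 \cup \mc{G}_1$ is a reduced simple normal crossings divisor with two components:
\[ H^{i-2}(\mc{G}_0 \cap \mc{G}_1, \mb{Q})(-1) \xrightarrow{f_i} H^i(\mc{G}_{X_0}(2,\mc{L}_0),\mb{Q}) \xrightarrow{\mr{sp}_i} H^i(\mc{G}(2,\mc{L})_\infty, \mb{Q}) \xrightarrow{g_i} \mr{Gr}^W_{i+1} H^i(\mc{G}(2,\mc{L})_\infty, \mb{Q}) \to 0. \]
This presents each $H^i(\mc{G}_{X_0}(2,\mc{L}_0), \mb{Q})$ as an extension of $\Ima(\mr{sp}_i)$ by $\Ima(f_i)$. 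I would identify the graded $\mb{Q}$-vector space $\bigoplus_i \Ima(\mr{sp}_i)$ with the first summand of the theorem and $\bigoplus_i \Ima(f_i)$ with the second, then upgrade this additive decomposition to one of graded rings.

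For the first summand I would use that $\Ima(\mr{sp}_i) = \ker(g_i) = W_i H^i(\mc{G}(2,\mc{L})_\infty, \mb{Q})$. By \S \ref{subsecgen} the ring $H^*(\mc{G}(2,\mc{L})_\infty, \mb{Q})$ is generated by $\alpha_\infty, \beta_\infty, \psi_1^\infty, \dots, \psi_{2g}^\infty$. Transporting the weight data of Theorem \ref{ntor11} through the Mumford-Newstead isomorphism $\widetilde{\Phi}_0$ of Theorem \ref{ner01}, the classes $\psi_g^\infty$ and $\psi_{2g}^\infty$ are of pure weights $2$ and $4$ respectively, the remaining $\psi_j^\infty$ are of pure weight $3$, and $\alpha_\infty, \beta_\infty$ are of pure weights $2$ and $4$. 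A monomial $\alpha_\infty^a \beta_\infty^b \psi_I^\infty$ therefore lies in $W_i$ precisely when the appearance of $\psi_{2g}^\infty$ forces that of $\psi_g^\infty$, which is exactly the monodromy-invariance clause in the definition of $P_i^\infty \otimes \mb{Q}[\alpha_\infty, \beta_\infty, \psi_\infty]$ in \S \ref{subsecrel}. The relations $\zeta_k^\infty = 0$ hold on each nearby smooth fiber by the Kirwan-Zagier theorem \cite{kirw, zag, kingn}; they transport to identities in $H^*(\mc{G}(2,\mc{L})_\infty, \mb{Q})$ via the trivialization \eqref{ntor15}, and a comparison of Poincar\'{e} polynomials with the smooth case shows they exhaust the relations. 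This identifies $\bigoplus_i \Ima(\mr{sp}_i)$ with the first summand.

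For the second summand I would note $\Ima(f_i) \cong H^{i-2}(\mc{G}_0 \cap \mc{G}_1, \mb{Q}) / \ker(f_i)$. The Leray-Hirsch decomposition \eqref{ntor04} presents $H^*(\mc{G}_0 \cap \mc{G}_1, \mb{Q})$ as a free module over $H^*(M)$ (with $M := M_{\widetilde{X}_0}(2, \widetilde{\mc{L}}_0)$) of rank $4$ with basis $1, \xi_1, \xi_2, \xi_1 \xi_2$, while Theorem \ref{bet01} identifies $\ker(f_i)$ with the $H^*(M)$-submodule spanned by $\xi_1 - \xi_2$. Writing $X, Y$ for the Gysin images of $\xi_1, \xi_2$ in $H^*(\mc{G}_{X_0}(2,\mc{L}_0))$ and invoking the projection formula for the inclusion $\mc{G}_0 \cap \mc{G}_1 \hookrightarrow \mc{G}_{X_0}(2,\mc{L}_0)$, one obtains that $\bigoplus_i \Ima(f_i)$ is isomorphic as a graded $H^*(M)$-module to $H^*(M) \otimes \mb{Q}[X, Y]/(X^2, Y^2, X - Y)$. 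Tensoring with the classical Mumford-Kirwan presentation of $H^*(M)$ for the smooth curve $\widetilde{X}_0$ of genus $g - 1$ (recalled in \S \ref{subsecrel}) produces the second summand.

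Finally, to upgrade the decomposition to one of graded rings: $\bigoplus_i \Ima(f_i)$ is an ideal in $H^*(\mc{G}_{X_0}(2, \mc{L}_0), \mb{Q})$ by $H^*$-linearity of $f_\bullet$ (projection formula), and a ring-theoretic section of $\mr{sp}_\bullet$ is furnished by lifting $\alpha_\infty, \beta_\infty$ and the $\psi_j^\infty$ (via their realization as Chern classes of the relative universal sheaf, as in \S \ref{sec2a}) to globally defined classes on $\mc{G}(2, \mc{L})$ that restrict coherently to both the central and nearby fibers. The principal obstacle will be verifying the multiplicative relations on the second summand, specifically that $X \cdot Y = 0$ in $H^*(\mc{G}_{X_0}(2, \mc{L}_0), \mb{Q})$ despite $\xi_1 \xi_2 \neq 0$ in $H^*(\mc{G}_0 \cap \mc{G}_1, \mb{Q})$, and that no additional cross relations relate the two summands beyond $I_{g-i-3}$ and $I^\infty_{g-i}$; both points require a careful analysis of the Gysin morphisms $i_{2,*}, i_{3,*}$ into the irreducible components together with the compatibility of the two $H^*(M)$-algebra structures coming from the projections $\rho_1, \rho_2$ used in the proof of Theorem \ref{bet01}.
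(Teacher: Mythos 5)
Your proposal follows essentially the same route as the paper: the specialization sequence of Corollary \ref{ntor03} splits $H^*(\mc{G}_{X_0}(2,\mc{L}_0),\mb{Q})$ into $\bigoplus_i W_iH^i(\mc{G}(2,\mc{L})_\infty,\mb{Q})$ and $\bigoplus_i\ker(\mr{sp}_i)$, the first summand being identified through the weights of the generators $\alpha_\infty,\beta_\infty,\psi_j^\infty$ and the monomial basis of \cite[Remark $5.3$]{kingn}, and the second through the Leray--Hirsch decomposition \eqref{ntor04} together with Theorem \ref{bet01}. The multiplicative points you flag at the end (that $\bigoplus_i\ker(\mr{sp}_i)$ is an ideal, and the compatibility of $X\cdot Y$ with $\xi_1\xi_2$ in the quotient by $\ker(f_i)$) are legitimate concerns, but the paper does not treat them in any greater detail, passing from the additive decomposition to the graded-ring statement without further argument.
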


\begin{proof}
 For any $s \in \Delta^*$, consider the natural isomorphism $\phi_s:H^1(\widetilde{\mc{X}}_\infty, \mb{Z}) \xrightarrow{\sim} H^1(\mc{X}_s,\mb{Z})$ (induced by the closed immersion of $\mc{X}_s$ as a fiber of $\widetilde{\mc{X}}_\infty$).
Denote by $e_{i,s}:=\phi_s(e_i)$, where $e_i$ as in Theorem \ref{ntor11}. 
Consider the composed morphism (use \eqref{eq:bet09}):
\[\ov{\phi}_s: \mb{Z}[\mb{P}^1]^{\vee} \oplus \mb{Z}f' = H^2(\mb{P}^1,\mb{Z}) \oplus H^2(\widetilde{X}_0,\mb{Z}) \cong H^2(\widetilde{\mc{X}}_0,\mb{Z}) \xrightarrow{\mr{sp}_2} H^2(\widetilde{\mc{X}}_\infty,\mb{Z})
 \xrightarrow[\sim]{\phi_s} H^2(\mc{X}_s,\mb{Z}),
\]
where $f' \in H^2(\widetilde{X}_0,\mb{Z})$ is
the Poincar\'{e} dual of the 
fundamental class of $\widetilde{X}_0$.  Since $H^2(\widetilde{\mc{X}}_\infty,\mb{Z}) \cong \mb{Z}$ is pure, $\mr{sp}_2$ is surjective.
It follows from the definition of the Gysin morphism that
\[\Ima(f_2: H^2(\widetilde{X}_0 \cap \mb{P}^1, \mb{Z}) \to H^2(\mb{P}^1, \mb{Z}) \oplus H^2(\widetilde{X}_0, \mb{Z}))=\mb{Z}([\mb{P}^1]^\vee \oplus f'),\]
hence does not intersect $\mb{Z}(0 \oplus f')$ non-trivially (here $f_2$
as in Corollary \ref{ntor03}).
Using Corollary \ref{ntor03}, this implies $\Ima(\mr{sp}_2)=\mr{sp}_2(\mb{Z}(0 \oplus f'))$.
As $\phi_s$ is an isomorphism, this implies $\ov{\phi}_s(0 \oplus f')$ 
is the Poincar\'{e} dual of the fundamental class of $\mc{X}_s$ (up to a sign), denoted $f_s$. Since pullback commutes with cup-product, Theorem \ref{ntor11}
implies that $e_{1,s},...,e_{2g,s}$ is a symplectic basis of $H^1(\mc{X}_s,\mb{Z})$ with $e_{i,s}e_{i+g,s}=-f_s$ for $1 \le i \le g$.
Using \cite[Remark $5.3$]{kingn}, $H^i(\mc{G}(2,\mc{L})_s,\mb{Q})$ has a $\mb{Q}$-basis consisting of
 monomials of the form 
$\alpha_s^{j_1}\beta_s^{j_2}\psi_{i_1,s}\psi_{i_2,s}...\psi_{i_k,s}$ such that 
  $j_1+k<g$, $j_2+k<g$, $i_1<i_2<...<i_k$ and $2j_1+4j_2+3k=i$.
  By Theorems \ref{ntor11} and \ref{ner01}, $\psi_{2g}^\infty$ (resp. $\psi_g^\infty$) generates $\mr{Gr}^W_4H^3(\mc{G}(2,\mc{L})_\infty, \mb{Q})$
  (resp. $W_2H^3(\mc{G}(2,\mc{L})_\infty, \mb{Q})$). Since cup-product is a morphism of mixed Hodge structures,
  we have a basis of $W_iH^i(\mc{G}(2,\mc{L})_\infty, \mb{Q})$ consisting
   of monomials of the form 
  $\alpha_\infty^{j_1}\beta_\infty^{j_2}\psi^\infty_{i_1}\psi^\infty_{i_2}...\psi^\infty_{i_k}$
  with $j_1+k<g$, $j_2+k<g$, $i_1<i_2<...<i_k$, $2j_1+4j_2+3k=i$ satisfying: if $2g \in \{i_1,...,i_k\}$ then $g \in \{i_1,...,i_k\}$.
  Using the isomorphism $\oplus \eta_j$, we then obtain the following commutative diagram 
\[\begin{diagram}
 0&\rTo&\bigoplus_{k=0}^g P_k^\infty \otimes I_{g-k}^\infty&\rTo&\bigoplus_{k=0}^g P_k^\infty \otimes \mb{Q}[\alpha_\infty, \beta_\infty, \psi_\infty]&\rTo^{\nu_\infty}&H^*(\mc{G}(2,\mc{L})_\infty,\mb{Q})\\
   & &\dTo^{\oplus \eta_j}&\circlearrowleft&\dTo^{\oplus \eta_j}&\circlearrowleft&\dTo^{\oplus \eta_j}&\\
   0&\rTo&\bigoplus_{k=0}^g P_k^{(s)} \otimes I_{g-k,s}&\rTo&\bigoplus_{k=0}^g P_k^{(s)} \otimes \mb{Q}[\alpha_s, \beta_s, \psi_s]&\rTo^{\nu}&H^*(\mc{G}(2,\mc{L})_s,\mb{Q})&\rTo&0
  \end{diagram}\]
where $\nu_\infty$ is the natural morphism and the two rows are exact.
Using the description of $W_iH^i(\mc{G}(2,\mc{L})_\infty, \mb{Q})$ above, it is easy to check that
$\Ima(\nu_\infty)=W_iH^i(\mc{G}(2,\mc{L})_\infty, \mb{Q})$. Therefore, 
\begin{equation}\label{eq:bet10}
\bigoplus_iW_iH^i(\mc{G}(2,\mc{L})_\infty, \mb{Q}) \cong  \bigoplus\limits_i P_i^\infty \otimes \frac{\mb{Q}[\alpha_\infty, \beta_\infty, \psi_\infty]}{I^\infty_{g-i}}.
\end{equation}

Recall by Corollary \ref{ntor03} that $\ker(\mr{sp}_i) \cong \mr{Im}(f_i) \cong H^{i-2}(\mc{G}_0 \cap \mc{G}_1,\mb{Q})/ \ker(f_i)$.
Then using the identification \eqref{ntor04} along with Theorem \ref{bet01} we conclude:
{\small \[\ker(\mr{sp}_i) \cong \frac{H^{i-2}(M_{\widetilde{X}_0}(2,\widetilde{\mc{L}}_0),\mb{Q}) \oplus H^{i-4}(M_{\widetilde{X}_0}(2,\widetilde{\mc{L}}_0),\mb{Q})(\mb{Q}\xi_1 \oplus \mb{Q}\xi_2) 
 \oplus H^{i-6}(M_{\widetilde{X}_0}(2,\widetilde{\mc{L}}_0),\mb{Q})\xi_1\xi_2}{H^{i-4}(M_{\widetilde{X}_0}(2,\widetilde{\mc{L}}_0),\mb{Q})(\xi_1 \oplus -\xi_2)}.
\]}
Hence, $\oplus_i \ker(\mr{sp}_i) \cong H^*(M_{\widetilde{X}_0}(2,\widetilde{\mc{L}}_0),\mb{Q})(-2)[X,Y]/(X^2,Y^2,X-Y)$ defined by 
sending $\xi_1$ (resp. $\xi_2$) to $X$ (resp. $Y$), where $(X^2, Y^2, X-Y)$ is the ideal in the ring 
$H^*(M_{\widetilde{X}_0}(2,\widetilde{\mc{L}}_0),\mb{Q})[X,Y]$ generated $X^2, Y^2$ and $X-Y$. Using \cite[Theorem $3.2$]{kingn}, we conclude that 
\[\bigoplus\limits_i \ker(\mr{sp}_i) \cong \bigoplus\limits_i P_{i-2} \otimes \frac{\mb{Q}[\alpha, \beta, \psi, X, Y]}{(I_{g-i-3}, X^2, Y^2, X-Y)}.\]
By Corollary \ref{ntor03}, we have $H^*(\mc{G}_{X_0}(2,\mc{L}_0),\mb{Q})
\cong (\oplus_i W_iH^i(\mc{G}(2,\mc{L})_\infty, \mb{Q})) \oplus (\oplus_i \ker(\mr{sp}_i)).$
The theorem follows immediately.
 \end{proof}

\section{Hodge-Poincar\'{e} formula}\label{sec5}
The Hodge-Poincar\'{e} formula for moduli spaces of semi-stable sheaves
on smooth, projective curves is well-known and was classically computed
by Earl and Kirwan \cite{EK}. It records the Hodge decomposition of the cohomology ring of the 
moduli space. In this section, we compute the Hodge-Poincar\'{e} formula
for the Gieseker's moduli space of rank $2$ semi-stable sheaves with fixed determinant. One important difference with 
the classical case is the that the cohomology of the Gieseker's moduli
space is not pure, which make computations more complicated.

We use notations as in Notation \ref{tor33} and \S \ref{subsec1}. We briefly discuss the idea of the proof of Theorem \ref{th:giespo}.
Fix $s \in \Delta^*$.
The first step is to use the isomorphism $\eta_{_i}$ as in \eqref{ntor15}, to prove that there exists $\xi \in H^{2,1}(\mc{G}(2,\mc{L})_s, \mb{C})$ such that 
{\small \[H^3(\mc{G}(2,\mc{L})_s,\mb{C}) \cong \eta_{_3}(\mr{Gr}^W_{3}H^{3}(\mc{G}(2,\mc{L})_\infty,\mb{C})) \oplus \mb{C}\xi \oplus \mb{C}\ov{\xi} \mbox{ and }
 \{\xi^2, \ov{\xi}^2, \xi\ov{\xi} \} \in \eta_{_6}(\mr{Gr}^W_{6}H^{6}(\mc{G}(2,\mc{L})_\infty,\mb{C})).\]}
Next, we use Newstead's classical result \cite[Theorem $1$]{new1}, which states that the cohomology ring of $\mc{G}(2,\mc{L})_s$
is generated is degrees $2, 3$ and $4$. Since cup-product is a morphism of mixed Hodge structures, we get Proposition \ref{prop:bet05}.
We then use the isomorphism $\Phi_1$ to prove that $\mr{Gr}^W_iH^{i+1}(\mc{G}(2, \mc{L})_\infty, \mb{Q})$ can be
identified with $H^{i-2}(M_{\widetilde{X}_0}(2,\widetilde{\mc{L}}_0), \mb{Q})$ as pure Hodge structures. Theorem \ref{th:giespo} then follows
from the exact sequence \eqref{ntor02} combined with Theorem \ref{bet01}.

\begin{prop}\label{prop:bet05}
For any $s \in \Delta^*$, we have the following equality:
\[h^{p,q}\mr{Gr}^W_{p+q}H^{p+q}(\mc{G}(2,\mc{L})_\infty,\mb{C})=h^{p,q}(\mc{G}(2,\mc{L})_s)-h^{p-2,q-1}(M_{\widetilde{X}_0}(2,\widetilde{\mc{L}}_0))-h^{p-1,q-2}(M_{\widetilde{X}_0}(2,\widetilde{\mc{L}}_0)).\]
\end{prop}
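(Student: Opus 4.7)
The plan is to transport the limit mixed Hodge structure on $H^i(\mc{G}(2,\mc{L})_\infty, \mb{C})$ to the smooth fiber $H^i(\mc{G}(2,\mc{L})_s, \mb{C})$ via the local-system trivialization $\eta_i$ from \eqref{ntor15}, and then to combine Newstead's generation theorem with the fact that cup product is a morphism of mixed Hodge structures.

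First I would analyze $H^3$. By Theorems \ref{ntor11} and \ref{ner01}, the limit MHS on $H^3(\mc{G}(2,\mc{L})_\infty, \mb{C})$ has $\mr{Gr}^W_2 H^3 = \mb{C}\psi_g^\infty$ of Hodge type $(1,1)$, $\mr{Gr}^W_4 H^3 = \mb{C}\psi_{2g}^\infty$ of type $(2,2)$, and $\mr{Gr}^W_3 H^3$ spanned by $\psi_i^\infty$ for $i \notin \{g,2g\}$ with types $(2,1), (1,2)$, each appearing $g-1$ times. Using Deligne's canonical $I^{p,q}$-splitting of the MHS to lift $\mr{Gr}^W_3 H^3$ into $H^3(\mc{G}(2,\mc{L})_\infty,\mb{C})$ and transporting via $\eta_3$, the complement of $\eta_3(\mr{Gr}^W_3 H^3)$ in $H^3(\mc{G}(2,\mc{L})_s, \mb{C})$ is a conjugation-invariant $2$-dimensional $\mb{C}$-subspace. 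I would produce $\xi \in H^{2,1}(\mc{G}(2,\mc{L})_s, \mb{C})$ so that this complement is $\mb{C}\xi\oplus\mb{C}\bar\xi$; since $\xi$ has odd degree, $\xi^2=\bar\xi^2=0$, and $\xi\bar\xi$ corresponds under $\eta_6$ to $\psi_g^\infty\psi_{2g}^\infty$, a class in $\mr{Gr}^W_6 H^6(\mc{G}(2,\mc{L})_\infty)$ of pure Hodge type $(3,3)$.

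By Newstead's theorem \cite[Theorem $1$]{new1}, $H^*(\mc{G}(2,\mc{L})_s, \mb{C})$ is generated in degrees $2,3,4$; together with the previous step it is generated by $\alpha_s, \beta_s$, the $\psi_{i,s}$ with $i\notin\{g,2g\}$, and the classes $\xi, \bar\xi$. Because cup product is a morphism of MHS, the weight of any monomial in these generators under the limit MHS (transported via $\eta$) equals the sum of factor weights, namely $2,4,3,2,4$ respectively. Using $\xi^2=\bar\xi^2=0$, every monomial of total degree $i$ has limit weight $i$ (no $\xi,\bar\xi$, or one of each), $i-1$ (exactly one $\xi$), or $i+1$ (exactly one $\bar\xi$). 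Thus the pure-weight-$i$ summand $\mr{Gr}^W_i H^i(\mc{G}(2,\mc{L})_\infty)$ is spanned by the first class of monomials.

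I would then identify the ``pure-weight'' subring $\bigoplus_i \mr{Gr}^W_i H^i(\mc{G}(2,\mc{L})_\infty)$ with $H^*(M_{\widetilde{X}_0}(2,\widetilde{\mc{L}}_0))$ as graded pure Hodge structures, by extending $\Phi_1$ via $\alpha_\infty\leftrightarrow\alpha$ and $\beta_\infty\leftrightarrow\beta$. A pure monomial of degree $i-3$ and Hodge type $(p',q')$ then corresponds to a class in $H^{p',q'}(M_{\widetilde{X}_0}(2,\widetilde{\mc{L}}_0))$; cupping with $\xi\in H^{2,1}$ (respectively $\bar\xi\in H^{1,2}$) produces a degree-$i$ class of type $(p'+2,q'+1)$ (respectively $(p'+1,q'+2)$). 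Hence the contribution to $h^{p,q}(\mc{G}(2,\mc{L})_s)$ from monomials with one $\xi$ factor equals $h^{p-2,q-1}(M_{\widetilde{X}_0}(2,\widetilde{\mc{L}}_0))$, and from one $\bar\xi$ factor equals $h^{p-1,q-2}(M_{\widetilde{X}_0}(2,\widetilde{\mc{L}}_0))$; subtracting both from $h^{p,q}(\mc{G}(2,\mc{L})_s)$ yields the stated formula. The main technical hurdle will be to verify that Kirwan's Mumford relations $(\zeta_k^\infty)$ on $\mc{G}(2,\mc{L})_s$, restricted to the pure-weight generators, coincide with the Mumford relations $(\zeta_k)$ on $M_{\widetilde{X}_0}(2,\widetilde{\mc{L}}_0)$ under the extended $\Phi_1$, so that no extraneous relations appear on either side.
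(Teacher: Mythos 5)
Your overall strategy coincides with the paper's: produce a class $\xi$ of fiber Hodge type $(2,1)$ spanning, together with $\ov{\xi}$, a complement of the weight-$3$ part of $H^3(\mc{G}(2,\mc{L})_s,\mb{C})$, then invoke Newstead's generation in degrees $2,3,4$ and the compatibility of cup product with the limit mixed Hodge structure to split $H^{p,q}(\mc{G}(2,\mc{L})_s,\mb{C})$ into a pure-weight piece plus $\xi\cdot(\cdots)$ plus $\ov{\xi}\cdot(\cdots)$. However, your final counting step rests on a claim that is false: the graded ring $\bigoplus_i\mr{Gr}^W_iH^i(\mc{G}(2,\mc{L})_\infty,\mb{C})$ is \emph{not} isomorphic to $H^*(M_{\widetilde{X}_0}(2,\widetilde{\mc{L}}_0),\mb{C})$. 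Already for $g=2$ the normalization $\widetilde{X}_0$ has genus $1$, so $M_{\widetilde{X}_0}(2,\widetilde{\mc{L}}_0)$ is a point, whereas the pure-weight part of $H^*(\mc{G}(2,\mc{L})_\infty,\mb{C})$ has Hodge--Poincar\'e polynomial $P_g(x,y)-(x^2y+xy^2)P_{g-1}(x,y)$ (which for $g=2$ is six-dimensional); by Theorem \ref{th:simppo} this pure part is $H^*(U_{X_0}(2,\mc{L}_0),\mb{C})$, which is much larger than $H^*(M_{\widetilde{X}_0}(2,\widetilde{\mc{L}}_0),\mb{C})$. Consequently ``a pure monomial of degree $i-3$ corresponds to a class in $H^{i-3}(M_{\widetilde{X}_0}(2,\widetilde{\mc{L}}_0))$'' is not available, and multiplication by $\xi$ is not injective on the pure part (it kills $\eta_{_6}(\psi_g^\infty\psi_{2g}^\infty)$, for instance), so your conclusion that the one-$\xi$ contribution equals $h^{p-2,q-1}(M_{\widetilde{X}_0}(2,\widetilde{\mc{L}}_0))$ does not follow from the setup you describe.

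What actually carries the count in the paper is an isomorphism of the \emph{adjacent} graded pieces, $\mr{Gr}^W_{i-1}H^i(\mc{G}(2,\mc{L})_\infty,\mb{Q})\cong H^{i-3}(M_{\widetilde{X}_0}(2,\widetilde{\mc{L}}_0),\mb{Q})(-1)$ and similarly for $\mr{Gr}^W_{i+1}$, obtained by stripping the single $\psi_g^\infty$ (resp.\ $\psi_{2g}^\infty$) factor from the monomial basis of \cite[Remark $5.3$]{kingn}: the constraints $j_1+k<g$, $j_2+k<g$ for monomials containing $\psi_g^\infty$ become exactly $j_1+(k-1)<g-1$, $j_2+(k-1)<g-1$, the defining constraints for the basis of $H^*(M_{\widetilde{X}_0}(2,\widetilde{\mc{L}}_0),\mb{Q})$, whose underlying curve has genus $g-1$. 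This is the map $\Phi_{p+q+1}$ appearing in the proof of Theorem \ref{th:giespo}, and it --- not an isomorphism of pure subrings --- yields the two correction terms. Two further points to repair: (i) $\xi$ and $\ov{\xi}$ are not weight-homogeneous for the transported weight filtration (one has $\xi=\eta_{_3}(\lambda\psi_g^\infty+\psi_{2g}^\infty)\in W_4\setminus W_3$), so the bookkeeping ``weight $2$ for $\xi$, weight $4$ for $\ov{\xi}$'' should be replaced by working with the weight-homogeneous generators $\psi_g^\infty,\psi_{2g}^\infty$ and the fiber-type-homogeneous generators $\xi,\ov{\xi}$ separately; (ii) transporting the Deligne splitting via $\eta_{_3}$ does not by itself produce a complement compatible with the Hodge decomposition of the \emph{fiber}, since $\eta_{_3}$ carries the limit Hodge filtration rather than the fiber one; the paper bridges this by exhibiting a monodromy non-invariant class in $F^2H^3(\mc{G}(2,\mc{L})_s,\mb{C})$ and citing the comparison of the two filtrations on the invariant part.
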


\begin{proof}
For any $s \in \Delta^*$, we have the natural isomorphism 
 \[\eta_{_i}:H^i(\mc{G}(2,\mc{L})_\infty, \mb{Q}) \xrightarrow{\sim} H^i(\mc{G}(2,\mc{L})_s, \mb{Q})\]
for all $i \ge 0$, as  in \eqref{ntor15}.
Given a subspace $W$ of $H^i(\mc{G}(2,\mc{L})_s, \mb{Z})$ (resp. $H^i(\mc{G}(2,\mc{L})_\infty, \mb{Q})$), denote by 
$W^{T_{\mc{G}(2,\mc{L})_s}}$ (resp. $W^{T_{\mc{G}(2,\mc{L})}}$) the subspace of $W$ consisting of elements that are invariant under 
the action of the monodromy operator $T_{\mc{G}(2,\mc{L})_s}$ (resp. $T_{\mc{G}(2,\mc{L})}$), where the monodromy operators are described in 
\eqref{int01} and the discussion before that.
We claim that there exists $\xi \in F^2H^3(\mc{G}(2,\mc{L})_s,\mb{C})$ such that $T_{\mc{G}(2,\mc{L})_s}(\xi) \not= \xi$.
Indeed, $H^3(\mc{G}(2,\mc{L})_s,\mb{C})=F^2 H^3(\mc{G}(2,\mc{L})_s,\mb{C}) \oplus \ov{F^2 H^3(\mc{G}(2,\mc{L})_s,\mb{C})}$.
Since $T_{\mc{G}(2,\mc{L})_s}$ commutes with conjugation, we observe that if the entire 
$F^2H^3(\mc{G}(2,\mc{L})_s,\mb{C})$ is invariant under $T_{\mc{G}(2,\mc{L})_s}$, then so is $H^3(\mc{G}(2,\mc{L})_s,\mb{C})$.
Since $T_{\mc{G}(2,\mc{L})}$ is a canonical extension of $T_{\mc{G}(2,\mc{L})_s}$, this implies $H^3(\mc{G}(2,\mc{L})_\infty,\mb{C})$ is 
$T_{\mc{G}(2,\mc{L})}$-invariant. But, Propositions \ref{tor26} and \ref{ner15} imply that $\mr{Gr}^W_4H^3(\mc{G}(2,\mc{L})_\infty,\mb{C}) \cong \mb{C}$,
which is not $T_{\mc{G}(2,\mc{L})}$-invariant by the invariant cycle theorem (Remark \ref{ntor01}). This proves the claim.

Since $\mr{Gr}^W_iH^i(\mc{G}(2,\mc{L})_\infty, \mb{C})$ is $T_{\mc{G}(2,\mc{L})}$-invariant, 
\cite[p. $66$, Lemma $2.4.12$ and p. $69$, Theorem $6.6$]{kuli}
implies that $\eta_{_i}(H^{p,i-p} \mr{Gr}^W_i H^i(\mc{G}(2,\mc{L})_\infty, \mb{C})) 
\subset H^{p,i-p} (\mc{G}(2,\mc{L})_s)$.
Since 
\[\mb{C} \cong \mr{Gr}^W_4 H^3(\mc{G}(2,\mc{L})_\infty,\mb{C}) \cong  F^2 \mr{Gr}^W_4 H^3(\mc{G}(2,\mc{L})_\infty,\mb{C})
\mbox{ and }\] $H^{p,p} \mr{Gr}^W_{2p}(\mc{G}(2,\mc{L})_\infty,\mb{C})=
H^{2p}(\mc{G}(2,\mc{L})_\infty,\mb{C})$ for $p=1, 2$ (by Remark \ref{tor25}, any one dimensional limit mixed Hodge structure is pure),
this implies $\eta_{_{2p}}(H^{2p}(\mc{G}(2,\mc{L})_\infty,\mb{C}))=
H^{p,p}(\mc{G}(2,\mc{L})_s,\mb{C})$ for $p=1,2$ and 
$F^2H^3(\mc{G}(2,\mc{L})_s,\mb{C})=\eta_{_3}(F^2\mr{Gr}^W_3H^3(\mc{G}(2,\mc{L})_\infty,\mb{C})) \oplus \mb{C}\xi$, 
with $\xi$ as before.
Therefore, $\ov{F^2H^3(\mc{G}(2,\mc{L})_s,\mb{C})}=\eta_{_3}(\ov{F^2\mr{Gr}^W_3H^3(\mc{G}(2,\mc{L})_\infty,\mb{C})}) \oplus \mb{C}\ov{\xi}$,
where $\ov{\xi}$ denotes the conjugate of $\xi$.
Let $\psi_i^\infty$ for $1 \le i \le 2g$ be the generators of $H^3(\mc{G}(2,\mc{L})_\infty,\mb{C})$ as defined in \S \ref{sec4}. Recall, by 
Theorems \ref{ntor11} and \ref{ner01}, we have $\mr{Gr}^W_4H^3(\mc{G}(2,\mc{L})_\infty,\mb{Q})=\mb{Q}\psi_{2g}^\infty$ and
$\mr{Gr}^W_2H^3(\mc{G}(2,\mc{L})_\infty,\mb{Q})=\mb{Q}\psi_g^\infty$. As 
$\mr{Gr}^W_3H^3(\mc{G}(2,\mc{L})_\infty,\mb{C}) \cong F^2\mr{Gr}^W_3H^3(\mc{G}(2,\mc{L})_\infty,\mb{C}) \oplus \ov{F^2\mr{Gr}^W_3H^3(\mc{G}(2,\mc{L})_\infty,\mb{C})}$,
we can assume that $\xi=\eta_{_3}(\lambda_1 \psi_g^\infty+\lambda_2\psi_{2g}^\infty)$ for some $\lambda_1, \lambda_2 \in \mb{C}$.
Since $T_{\mc{G}(2,\mc{L})_s}(\xi) \not= \xi$, $\lambda_2 \not= 0$ ($\psi_g^\infty$ is $T_{\mc{G}(2,\mc{L})}$-invariant).
Replace $\xi$ by $\xi/\lambda_2$. Then, $\xi=\eta_{_3}(\lambda \psi_g^\infty+\psi_{2g}^\infty)$ for some $\lambda \in \mb{C}$.
Since $(\psi_g^\infty)^2=0=(\psi_{2g}^\infty)^2$
and $\psi_g^\infty\psi_{2g}^\infty \in \mr{Gr}^W_6H^6(\mc{G}(2,\mc{L})_\infty,\mb{C})$,  
$\xi^2, \ov{\xi}^2$ and $\xi\ov{\xi}$ must belong to $\eta_{_6}(\mr{Gr}^W_6H^6(\mc{G}(2,\mc{L})_\infty,\mb{C}))$.
Using the isomorphism $\Phi_1$ (see \S \ref{sec4}) of pure Hodge structures from $\mr{Gr}^W_3H^3(\mc{G}(2,\mc{L})_\infty,\mb{C})$
to $H^3(M_{\widetilde{X}_0}(2,\widetilde{\mc{L}}_0),\mb{C})$, we can then conclude that 
\[H^{p,q}(\mc{G}(2,\mc{L})_s,\mb{C}) \cong \eta_{_{p+q}}(H^{p,q} \mr{Gr}^W_{p+q}H^{p+q}(\mc{G}(2,\mc{L})_\infty,\mb{C})) \oplus 
 H^{p-2,q-1}(M_{\widetilde{X}_0}(2,\widetilde{\mc{L}}_0),\mb{C})\xi \oplus\]\[\oplus  
 H^{p-1,q-2}(M_{\widetilde{X}_0}(2,\widetilde{\mc{L}}_0),\mb{C})\ov{\xi}\]
 (use cup-product is a morphism of Hodge structures, $\xi$ is of type $(2,1)$ 
 and $H^*(\mc{G}(2,\mc{L})_s, \mb{Q})$ is generated in degrees $2, 3$ and $4$ as mentioned before).
This proves the proposition.
 \end{proof}

  \begin{thm}\label{th:giespo}
   The Hodge-Poincar\'{e} formula for the cohomology ring $H^*(\mc{G}_{X_0}(2,\mc{L}_0), \mb{C})$ is
   \[\frac{(1+xy^2)^{g-1}(1+x^2y)^{g-1}(1+2xy)(1+x^2y^2)-x^gy^g(1+x)^{g-1}(1+y)^{g-1}(2+(1+xy)^2)}{(1-xy)(1-x^2y^2)}.\]
  \end{thm}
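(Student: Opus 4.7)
The plan is to compute the Hodge-Poincar\'{e} polynomial by splitting each cohomology group via the short exact sequence of mixed Hodge structures
\[0 \to \Ima(f_i) \to H^i(\mc{G}_{X_0}(2,\mc{L}_0),\mb{Q}) \to W_i H^i(\mc{G}(2,\mc{L})_\infty,\mb{Q}) \to 0\]
extracted from \eqref{ntor02} in Corollary \ref{ntor03}. Write $P_M(x,y)$ for the Hodge-Poincar\'{e} polynomial of $M_{\widetilde{X}_0}(2,\widetilde{\mc{L}}_0)$ and $HP_s(x,y)$ for that of $\mc{G}(2,\mc{L})_s$, $s\in\Delta^*$; both are known in closed form by Earl-Kirwan \cite{EK} (for smooth curves of genera $g-1$ and $g$), and both carry the denominator $(1-xy)(1-x^2y^2)$, so the task is to identify the numerator.

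For the kernel, $H^{i-2}(\mc{G}_0\cap\mc{G}_1,\mb{Q})(-1)$ is pure of weight $i$, hence so is $\Ima(f_i)$. Theorem \ref{bet01} identifies $\ker(f_i)$ with the ``diagonal'' summand $H^{i-4}(M_{\widetilde{X}_0})(-2)$ coming from $\xi_1\oplus-\xi_2$ under the decomposition \eqref{ntor04}. Collapsing the two copies $H^{i-4}(M_{\widetilde{X}_0})(-1)^{\oplus 2}$ (attached to $\mb{Q}\xi_1\oplus\mb{Q}\xi_2$) modulo this diagonal leaves
\[\Ima(f_i) \cong H^{i-2}(M_{\widetilde{X}_0})(-1) \oplus H^{i-4}(M_{\widetilde{X}_0})(-2) \oplus H^{i-6}(M_{\widetilde{X}_0})(-3),\]
and summing Hodge-Poincar\'{e} contributions over $i$ (with each Tate twist $(-r)$ multiplying by $(xy)^r$) gives $\sum_i HP(\Ima(f_i)) = xy(1+xy+x^2y^2)\,P_M(x,y)$.

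For the image, Proposition \ref{tor26} restricts the nonzero weights of $H^i(\mc{G}(2,\mc{L})_\infty)$ to $\{i-1,i,i+1\}$, so $W_i H^i = \mr{Gr}^W_{i-1}H^i \oplus \mr{Gr}^W_i H^i$. Proposition \ref{prop:bet05} controls the weight-$i$ piece:
\[\sum_i HP\bigl(\mr{Gr}^W_i H^i(\mc{G}(2,\mc{L})_\infty)\bigr) = HP_s(x,y) - (x^2y+xy^2)\,P_M(x,y).\]
For the weight-$(i-1)$ piece, one has $\mr{Gr}^W_{i-1}H^i(\mc{G}(2,\mc{L})_\infty)\cong H^{i-3}(M_{\widetilde{X}_0})(-1)$ as pure Hodge structures of weight $i-1$ --- either by the outline's argument via $\Phi_1$ applied to the cokernel $^{^{\infty}}_{_W}E_2^{1,i-1}$, or equivalently via the monodromy isomorphism $N:\mr{Gr}^W_{i+1}H^i\xrightarrow{\sim}\mr{Gr}^W_{i-1}H^i(-1)$ of Remark \ref{tor25} combined with Theorem \ref{bet01}. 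This contributes $xy\,P_M(x,y)$, so $\sum_i HP(W_i H^i(\mc{G}(2,\mc{L})_\infty)) = HP_s(x,y) + xy(1-x-y)\,P_M(x,y)$.

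Adding the two contributions,
\[HP(\mc{G}_{X_0}(2,\mc{L}_0))(x,y) = HP_s(x,y) + xy\bigl(2-x-y+xy+x^2y^2\bigr)\,P_M(x,y).\]
Substituting the Earl-Kirwan formulas for $HP_s$ and $P_M$, factoring $(1+xy^2)^{g-1}(1+x^2y)^{g-1}$ and $x^gy^g(1+x)^{g-1}(1+y)^{g-1}$ out of the two summands of the numerator, and applying the elementary identities
\[(1+xy^2)(1+x^2y)+xy\bigl(2-x-y+xy+x^2y^2\bigr)=(1+2xy)(1+x^2y^2),\]
\[xy(1+x)(1+y)+xy\bigl(2-x-y+xy+x^2y^2\bigr)=xy\bigl(2+(1+xy)^2\bigr),\]
one recovers the claimed formula over the common denominator $(1-xy)(1-x^2y^2)$. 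The hardest step will be the weight-$(i-1)$ identification: Proposition \ref{prop:bet05} controls only the weight-$i$ piece, and Theorem \ref{bet01} produces $\mr{Gr}^W_{i+1}$ as a Gysin kernel, so recovering $\mr{Gr}^W_{i-1}H^i(\mc{G}(2,\mc{L})_\infty)$ as a pure Hodge structure (and not merely its dimension) requires either invoking the monodromy operator $N$ on the limit mixed Hodge structure or an explicit Leray-Hirsch computation of the cokernel $^{^{\infty}}_{_W}E_2^{1,i-1}$ using the Segre-type restriction maps on $\mc{G}_0\cap\mc{G}_1\hookrightarrow\mc{G}_0,\mc{G}_1$, with careful bookkeeping of Tate twists so that the Hodge types align with those of the claimed polynomial.
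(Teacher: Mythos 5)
Your proposal is correct and follows essentially the same route as the paper's proof: the same exact sequence from Corollary \ref{ntor03} combined with Theorem \ref{bet01} and the identification \eqref{ntor04} for the kernel term, Proposition \ref{prop:bet05} for the weight-$(p+q)$ piece of $H^{p+q}$, and the identification $\mr{Gr}^W_{p+q}H^{p+q+1}(\mc{G}(2,\mc{L})_\infty,\mb{Q}) \cong H^{p+q-2}(M_{\widetilde{X}_0}(2,\widetilde{\mc{L}}_0),\mb{Q})(-1)$ (the paper's isomorphism $\Phi_{p+q+1}$) for the odd-weight piece. Your regrouping into kernel and image of $\mr{sp}_i$ rather than the two weight-graded pieces of $H^i(\mc{G}_{X_0}(2,\mc{L}_0),\mb{Q})$ is only cosmetic, and your final polynomial identities reproduce the paper's correction term $Q(x,y)(2xy-x^2y-xy^2+x^2y^2+x^3y^3)$ exactly.
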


  \begin{proof}
   Using Theorem \ref{bet01} and the identification \eqref{ntor04}, the exact sequence \eqref{ntor02} become the following short exact sequence:
  \begin{equation}\label{eq:bet05}
   0 \to \bigoplus\limits_{j=1}^3 H^{p-j,q-j}(M_{\widetilde{X}_0}(2,\widetilde{\mc{L}}_0),\mb{C})(-j) \xrightarrow{f_i} 
   H^{p,q} \mr{Gr}^W_{p+q} H^{p+q}(\mc{G}_{X_0}(2,\mc{L}_0),\mb{C}) 
  \end{equation}
  \[\xrightarrow{\mr{sp}_{p+q}} H^{p,q} \mr{Gr}^W_{p+q} H^{p+q}(\mc{G}(2,\mc{L})_\infty, \mb{C}) \to 0\] and $H^{p,q} \mr{Gr}^W_{p+q} H^{p+q+1}(\mc{G}_{X_0}(2,\mc{L}_0),\mb{C}) \cong H^{p,q} \mr{Gr}^W_{p+q} H^{p+q+1}(\mc{G}(2,\mc{L})_\infty, \mb{C})$.
Using \cite[Remark $5.3$]{kingn}, $\mr{Gr}^W_{p+q}H^{p+q+1}(\mc{G}(2,\mc{L})_\infty,\mb{Q})$ has a $\mb{Q}$-basis consisting of
 monomials of the form 
\[\alpha_\infty^{j_1}\beta_\infty^{j_2}\psi_g^\infty\psi_{i_1}^\infty \psi_{i_2}^\infty...\psi_{i_k}^\infty \mbox{ such that } 
\{g, 2g\} \cap \{i_1,...,i_k\}=\emptyset,
  j_1+k<g-1, j_2+k<g-1,\] $i_1<i_2<...<i_k$ and $2j_1+4j_2+3(k+1)=p+q+1$ (cup-product is a morphism of mixed Hodge structures). 
  Define the morphism 
 \[\Phi_{p+q+1}:\mr{Gr}^W_{p+q}H^{p+q+1}(\mc{G}(2,\mc{L})_\infty,\mb{Q}) 
  \to H^{p+q-2}(M_{\widetilde{X}_0}(2,\widetilde{\mc{L}}_0), \mb{Q})
 \mbox{ as }\]
\[\Phi_{p+q+1}(\alpha_\infty^{j_1}\beta_\infty^{j_2}\psi_g^\infty\psi_{i_1}^\infty\psi_{i_2}^\infty...\psi_{i_k}^\infty)=\alpha^{j_1}\beta^{j_2}\Phi_1(\psi_{i_1}^\infty)\Phi_1(\psi_{i_2}^\infty)...\Phi_1(\psi_{i_k}^\infty)\]
  and extend linearly, where $\Phi_1$ is the isomorphism defined in \S \ref{sec4}. Since $\psi_g^\infty$ is of Hodge type $(1,1)$ and $\Phi_1$ is an isomorphism of Hodge structures, 
\cite[Remark $5.3$]{kingn} implies that $\Phi_{p+q+1}$ is an isomorphism of pure Hodge structures which sends Hodge type $(i,p+q-i)$ to $(i-1, p+q-1-i)$. Note that the $(p,q)$-th part of the 
cohomology ring $H^*(\mc{G}_{X_0}(2,\mc{L}_0), \mb{C})$
is given by
\[h^{p,q}(\mc{G}_{X_0}(2,\mc{L}_0), \mb{C}):=h^{p,q}\mr{Gr}^W_{p+q}H^{p+q}(\mc{G}_{X_0}(2,\mc{L}_0), \mb{C}) + h^{p,q}\mr{Gr}^W_{p+q}H^{p+q+1}(\mc{G}_{X_0}(2,\mc{L}_0), \mb{C}).\]
Using the isomorphism $\Phi_{p+q+1}$, the exact sequence \eqref{eq:bet05} and Proposition \ref{prop:bet05} we have
\[h^{p,q}(\mc{G}_{X_0}(2,\mc{L}_0))=\left(h^{p,q}(\mc{G}(2,\mc{L})_s)-h^{p-2,q-1}(M_{\widetilde{X}_0}(2,\widetilde{\mc{L}}_0))-h^{p-1,q-2}(M_{\widetilde{X}_0}(2,\widetilde{\mc{L}}_0))+\right.\]
\[\left. +\sum\limits_{j=1}^3 h^{p-j,q-j}(M_{\widetilde{X}_0}(2,\widetilde{\mc{L}}_0))\right)+h^{p-1,q-1}(M_{\widetilde{X}_0}(2,\widetilde{\mc{L}}_0)).\]
By \cite[Corollary $2.9$]{bano}, the Hodge-Poincar\'{e} formulas for $\mc{G}(2,\mc{L})_s$ and $M_{\widetilde{X}_0}(2,\widetilde{\mc{L}}_0)$
are given by 
\begin{equation}\label{eq:bet06}
P_g(x,y):=\frac{(1+xy^2)^g(1+x^2y)^g-x^gy^g(1+x)^g(1+y)^g}{(1-xy)(1-x^2y^2)} \mbox{ and } Q(x,y):=P_{g-1}(x,y),
\end{equation}
respectively. This implies, the Hodge-Poincar\'{e} formula for 
$\mc{G}_{X_0}(2,\mc{L}_0)$ is given by 
\[P_g(x,y)+Q(x,y)(-x^2y-xy^2+2xy+x^2y^2+x^3y^3)=\]
\[=\frac{(1+xy^2)^{g-1}(1+x^2y)^{g-1}(1+2xy)(1+x^2y^2)-x^gy^g(1+x)^{g-1}(1+y)^{g-1}(2+(1+xy)^2)}{(1-xy)(1-x^2y^2)}.\]
This proves the theorem.
\end{proof}

\section{Simpson's moduli space with fixed determinant}\label{sec6}
 In this section 
we prove the analogue of the Mumford conjecture for the Simpson's moduli space of rank $2$ semi-stable sheaves  with fixed odd degree determinant on an irreducible nodal curve
and compute the associated Hodge-Poincar\'{e} formula (Theorem \ref{th:simppo}). We use Notation \ref{tor33} and notations as in \S \ref{note:ner01}.

\subsection{Simpson's moduli spaces with fixed determinant}
  Let $\mc{E}$ be a rank $2$, torsion-free sheaf on $X_0$ of degree $d$ and $\mc{L}_0$ an invertible sheaf on $X_0$ of degree $d$.
  We say that $\mc{E}$ \emph{has determinant} $\mc{L}_0$ if 
  there is a $\mathcal{O}_{_{X_0}}$-morphism $\wedge^2(\mc{E})\to \mc{L}_0$ which is an isomorphism outside the node $x_0$.
Note that if $\mc{E}$ is locally free then this condition implies
$\wedge^2\mc{E}\cong \mc{L}_0$.

Let $U_{X_0}(2,d)$ be the moduli space of stable rank $2$, degree $d$ torsion free sheaves on $X_0$ (see \cite{sesh}).
Denote by $U_{X_0}(2,d)^{0}$ the sublocus 
parameterizing locally free sheaves. Note that $U_{X_0}(2,d)^{0}$ is an open subvariety of $U_{X_0}(2,d)$. 
We have a well defined morphism $\mbox{det}: U_{X_0}(2,d)^{0}\to \mr{Pic}(X_0)$ defined
by $\mc{E} \mapsto \wedge^2\mc{E}$. Denote by $U_{X_0}(2,\mc{L}_0)^{0}:=\mbox{det}^{-1}([\mc{L}_0])$.

 Denote by $U_{X_0}(2,\mc{L}_0):=\{[\mc{E}]\in U_{X_0}(2,d) \mid \mc{E} \mbox{ has determinant } \mc{L}_0\}$. 
By \cite[Theorem 1.10]{sun1}, the Zariski closure $\overline{U_{X_0}(2,\mc{L}_0)^{0}}$ of $U_{X_0}(2,\mc{L}_0)^{0}$ in $U_{X_0}(2,d)$ is $U_{X_0}(2,\mc{L}_0)$.

\subsection{Stratification on the  moduli space}
Let $m_{x_0}$ denote the maximal ideal of $\mo_{X,x_0}$. In \cite{bhos2}, Bhosle shows 
there exists a stratification $U_0\subset U_1 \subset U_2:=U_{X_0}(2,d)$ of 
$U_{X_0}(2,d)$ by locally closed subschemes, where
\[ U_i:=\{[\mc{E}]\in U_{X_0}(2,d) \mid \mc{E}_{x_0} \simeq \mathcal{O}_{x_0}^{\oplus j} \oplus m_{x_0}^{\oplus 2-j} \mbox{ for } j \le i\}.\]
This induces the stratification 
$U_0(\mc{L}_0)\subset U_1(\mc{L}_0)\subset U_{X_0}(2,\mc{L}_0)$, where $U_i(\mc{L}_0):=U_i \cap U_{X_0}(2,\mc{L}_0)$.

Denote by $\pi:\widetilde{X}_0 \to X_0$ the normalization morphism.
By \cite[Proposition 4.9]{Bhosle}, there exists a natural isomorphism from $M_{_{\widetilde{X}_0}}(2,d-2)$ to $U_0$, sending $[\mc{E}]$ to $[\pi_*\mc{E}]$.
Denote by $\widetilde{\mc{L}}_0:=\pi^*\mc{L}_0$ and $D:=\pi^{-1}(x_0)$. 
Then, $M_{\widetilde{X}_0}(2,\widetilde{\mc{L}}_0(-D))$ maps isomorphically to $U_0(\mc{L}_0)$ (see proof of \cite[$(6.1)$]{tha}).

\subsection{Comparison between Gieseker's and Simpson's moduli spaces}
Recall, there exists a natural proper morphism 
\begin{equation}\label{eq:bet11}
 \theta:\mc{G}_{X_0}(2,\mc{L}_0)\to U_{X_0}(2,\mc{L}_0)
\end{equation}
defined by 
pushing forward a rank $2$, locally-free sheaf defined over a curve semi-stably equivalent to $X_0$, via the natural contraction map to $X_0$
(see \cite[Theorem $3.7(3)$]{sun1}). Denote by $\mc{G}_{X_0}(2,\mc{L}_0)^0 \subset \mc{G}_0$ the sub-locus of $\mc{G}_{X_0}(2,\mc{L}_0)$ 
parameterizing locally-free sheaves on $X_0$.
Using \cite[Remark $5.2$]{sesh4}, we conclude that $\theta$ maps the irreducible component $\mc{G}_1$
into $U_0(\mc{L}_0)$ and $\mc{G}_{X_0}(2,\mc{L}_0)^0$ maps isomorphically to $U_{X_0}(2,\mc{L}_0)^0$ (use the description of the irreducible components of 
$\mc{G}_{X_0}(2,\mc{L}_0)$ as given in \cite[\S $6$]{tha} and \cite[Theorem $3.7$]{sun1}). 
By the properness of $\theta$, we note that $\theta$ maps $\mc{G}_1$ surjectively to $U_0(\mc{L}_0)$.
Moreover, since $U_0(\mc{L}_0) \cong M_{\widetilde{X}_0}(2,\widetilde{\mc{L}}_0(-D))$, it is non-singular (see \cite[Corollary $4.5.5$]{huy}).

 \subsection{Generalized Mumford's conjecture and Hodge-Poincar\'{e} formula for Simpson's moduli space}
We briefly discuss the idea of the proof of Theorem \ref{th:simppo}.
 Using the restriction of the proper morphism $\theta$ to $\mc{G}_1$  and the identifications \eqref{ntor04} and \eqref{ntor05},
we compute the kernel and the cokernel of the pull-back morphism $\theta^*$ (see Proposition \ref{prop:bet02}).
Combining \eqref{ntor02} and Proposition \ref{ner15}, we obtain an explicit 
description of the kernel of the specialization morphism $\mr{sp}_i$.
We use this to show that $H^i(U_{X_0}(2,\mc{L}_0),\mb{Q})$ can be identified with 
the image of $\mr{sp}_i$ as mixed Hodge structures. Theorem \ref{th:simppo} then follows from 
Theorems \ref{th:bet1} and \ref{th:giespo}.

\begin{prop}\label{prop:bet02}
The natural morphism $\theta^*:H^i(U_{X_0}(2,\mc{L}_0),\mb{Q}) \to H^i(\mc{G}_{X_0}(2,\mc{L}_0),\mb{Q})$ is injective 
with $\mr{Gr}^W_{i-1}H^i(U_{X_0}(2,\mc{L}_0),\mb{Q}) \xrightarrow[\sim]{\theta^*} \mr{Gr}^W_{i-1} H^i(\mc{G}_{X_0}(2,\mc{L}_0),\mb{Q})$
and cokernel isomorphic to $\bigoplus\limits_{j=1}^3 H^{i-2j}(M_{\widetilde{X}_0}(2,\widetilde{\mc{L}}_0), \mb{Q})(\xi'')^j$
such that the resulting (cokernel) morphism factors as 
 \begin{equation}\label{eq:bet03}
 \mr{Gr}^W_iH^i(\mc{G}_{X_0}(2,\mc{L}_0),\mb{Q}) \xrightarrow{r^*} H^i(\mc{G}_1, \mb{Q}) \xrightarrow{\eqref{ntor05}} \bigoplus\limits_{j=1}^3 H^{i-2j}(M_{\widetilde{X}_0}(2,\widetilde{\mc{L}}_0), \mb{Q})(\xi'')^j,
 \end{equation}
where $r: \mc{G}_1 \to \mc{G}_{X_0}(2,\mc{L}_0)$ is the natural inclusion.
 \end{prop}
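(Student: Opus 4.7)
The approach is to analyze $\theta^*$ through the two restrictions to the irreducible components $\mc{G}_0$ and $\mc{G}_1$. Set $\theta_0 := \theta|_{\mc{G}_0}$ and $\theta_1 := \theta|_{\mc{G}_1}$. By the identification $\mc{G}_{X_0}(2,\mc{L}_0)^0 \xrightarrow{\sim} U_{X_0}(2,\mc{L}_0)^0$ of dense open subsets inside $\mc{G}_0$ and $U_{X_0}(2,\mc{L}_0)$ respectively, $\theta_0$ is proper birational. A dimension count ($\dim\mc{G}_1 = 3g-3$ and $\dim U_0(\mc{L}_0) = \dim M_{\widetilde{X}_0}(2,\widetilde{\mc{L}}_0(-D)) = 3g-6$) shows that $\theta_1$ contracts the $\mb{P}^3$-fibers of $\rho_2 : \mc{G}_1 \to M_{\widetilde{X}_0}(2,\widetilde{\mc{L}}_0)$; under the natural isomorphism between $H^*(U_0(\mc{L}_0))$ and $H^*(M_{\widetilde{X}_0}(2,\widetilde{\mc{L}}_0))$ (obtained from tensoring with a line bundle of appropriate degree), $\theta_1^*$ coincides with $\rho_2^*$.

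Injectivity of $\theta^*$ follows from injectivity of the composition $\theta_0^* = r_0^*\circ\theta^*$, which is standard for proper birational morphisms onto an irreducible variety (any class killed by $\theta_0^*$ must vanish on the dense open where $\theta_0$ is an isomorphism). For the cokernel, I combine the weight spectral sequence of Proposition \ref{tor26} with the Leray-Hirsch decomposition \eqref{ntor05}. The former realizes
\[\mr{Gr}^W_i H^i(\mc{G}_{X_0}(2,\mc{L}_0)) \cong \ker\bigl(H^i(\mc{G}_0) \oplus H^i(\mc{G}_1) \xrightarrow{i_3^* - i_2^*} H^i(\mc{G}_0 \cap \mc{G}_1)\bigr),\]
while the latter writes $H^i(\mc{G}_1) \cong \bigoplus_{j=0}^3 H^{i-2j}(M_{\widetilde{X}_0}(2,\widetilde{\mc{L}}_0))(\xi'')^j$. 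Since $r^*\circ \theta^* = \theta_1^* = \rho_2^*$ lands precisely in the $j=0$ summand, every class concentrated in the $j\ge 1$ summands is missed by the image of $\theta^*$, and this automatically yields the required factorization of the cokernel morphism through $r^*$.

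The main obstacle is the reverse direction: showing that $r^*|_{\mr{Gr}^W_i}$ actually surjects onto the $j \ge 1$ summands, and deducing the isomorphism on $\mr{Gr}^W_{i-1}$. For the surjectivity, given a class $\alpha(\xi'')^j$ with $j \ge 1$, one must produce $\beta \in H^i(\mc{G}_0)$ satisfying $i_3^*\beta = i_2^*(\alpha(\xi'')^j) = \alpha(\xi_1+\xi_2)^j$ in $H^i(\mc{G}_0\cap\mc{G}_1)$, using the Segre relation $j_2^*\xi''=\xi_1+\xi_2$ from \S \ref{note:ner01} together with $\xi_1^2=\xi_2^2=0$. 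A candidate $\beta$ can be built by transporting classes from the $\ov{\mr{SL}}_2$-bundle $P_0$ via the proper birational morphism $\tau_2 : W \to \mc{G}_0$ used in the proof of Theorem \ref{bet01}. Once this surjectivity is available, the \emph{matching} isomorphism on $\mr{Gr}^W_{i-1}$ drops out by comparing the weight spectral sequence of Proposition \ref{tor26} for $\mc{G}_{X_0}(2,\mc{L}_0)$ with the analogous description of $H^i(U_{X_0}(2,\mc{L}_0))$ coming from the resolution $\theta_0$ with exceptional locus over $U_0(\mc{L}_0)$.
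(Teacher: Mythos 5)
There is a genuine gap, and it sits at the foundation of your argument. You reduce injectivity of $\theta^*$ to injectivity of $\theta_0^*=r_0^*\circ\theta^*$, on the grounds that pullback along a proper birational morphism onto an irreducible variety is injective because ``any class killed by $\theta_0^*$ must vanish on the dense open where $\theta_0$ is an isomorphism.'' Neither half of this holds: a class restricting to zero on a dense open need not vanish (the generator of $H^2(\mb{P}^1,\mb{Q})$ dies on $\mb{A}^1$), and injectivity of $f^*$ for proper birational $f$ requires the \emph{target} to be smooth, which $U_{X_0}(2,\mc{L}_0)$ is not (compare the normalization of a nodal curve, where $H^1$ drops). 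Worse, in the present situation $\theta_0^*$ is demonstrably \emph{not} injective, so your reduction cannot work even in principle: $\mc{G}_0$ is a component of an SNC divisor, hence smooth projective, so $H^3(\mc{G}_0,\mb{Q})$ is pure of weight $3$; on the other hand \eqref{ntor02} with $H^1(\mc{G}_0\cap\mc{G}_1,\mb{Q})=0$ gives $H^3(\mc{G}_{X_0}(2,\mc{L}_0),\mb{Q})\cong W_3H^3(\mc{G}(2,\mc{L})_\infty,\mb{Q})$, whose weight-$2$ piece $\mb{Q}\psi_g^\infty$ is nonzero, so the very statement you are proving (injectivity of $\theta^*$ plus the $\mr{Gr}^W_{i-1}$-isomorphism) forces $W_2H^3(U_{X_0}(2,\mc{L}_0),\mb{Q})\neq 0$; strictness of morphisms of mixed Hodge structures then forces $r_0^*\circ\theta^*$ to kill that class. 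Your proposed injectivity of $\theta_0^*$ thus contradicts the proposition itself.

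The missing idea is the comparison of \emph{relative} cohomologies. Since $U_{X_0}(2,\mc{L}_0)\setminus U_0(\mc{L}_0)\cong \mc{G}_{X_0}(2,\mc{L}_0)\setminus\mc{G}_1\cong\mc{G}_0\setminus(\mc{G}_0\cap\mc{G}_1)$ and $(\mc{G}_0,\mc{G}_1)$ is an excessive couple, one has $H^i(U_{X_0}(2,\mc{L}_0),U_0(\mc{L}_0))\cong H^i_c(\mc{G}_0\setminus\mc{G}_0\cap\mc{G}_1)\cong H^i(\mc{G}_{X_0}(2,\mc{L}_0),\mc{G}_1)$, and a diagram chase on the two long exact sequences of pairs --- using that $(\theta')^*$ is the split injection onto the $j=0$ summand of \eqref{ntor05} and that the $j\ge 1$ summands lie in $\Ima(r^*)$ because they are hit by $r^*\circ f_i$ (Proposition \ref{ner15} plus the projection formula) --- yields simultaneously the injectivity of $\theta^*$, the identification $\mr{coker}(\theta^*)\cong\mr{coker}((\theta')^*)$, and then the $\mr{Gr}^W_{i-1}$-isomorphism because the cokernel is pure of weight $i$. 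Your remaining steps rely on exactly the surjectivity of $r^*|_{\mr{Gr}^W_i}$ onto the $j\ge1$ summands (only sketched, though salvageable along the lines just indicated) and on an ``analogous weight spectral sequence for $U_{X_0}(2,\mc{L}_0)$ coming from the resolution $\theta_0$'' that is not available off the shelf, so the isomorphism on $\mr{Gr}^W_{i-1}$ is asserted rather than proved. The excision-and-diagram-chase backbone has to be supplied for the argument to close.
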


\begin{proof}
  Recall, $U_{X_0}(2,\mc{L}_0)\backslash U_0(\mc{L}_0) \cong \mc{G}_{X_0}(2,\mc{L}_0)\backslash \mc{G}_1 \cong \mc{G}_0 \backslash \mc{G}_0 \cap \mc{G}_1$.
  Since $\mc{G}_0$   and $\mc{G}_1$ are an 
  excessive couple (\cite[Example B.$5(2)$]{pet}), we have for all $i \ge 0$,
  \[H^i(\mc{G}_{X_0}(2,\mc{L}_0),\mc{G}_1) \cong H^i(\mc{G}_0, \mc{G}_1 \cap \mc{G}_0) \cong H^i_c(\mc{G}_0 \backslash \mc{G}_0 \cap \mc{G}_1) \cong  H^i(U_{X_0}(2,\mc{L}_0), U_0(\mc{L}_0)),\]
  where the last two isomorphisms follow from \cite[Corollary B.$14$]{pet}.
  The morphism $\theta$ induces the following commutative
  diagram where every morphism is a morphism of mixed Hodge structures (use \cite[Proposition $5.46$]{pet}):
 {\small \begin{equation}\label{eq:bet04}
  \begin{diagram}
  H^i(U_{X_0}(2,\mc{L}_0), U_0(\mc{L}_0))&\rTo&H^i(U_{X_0}(2,\mc{L}_0),\mb{Q})&\rTo&H^i(U_0(\mc{L}_0),\mb{Q})&\rTo&H^{i+1}(U_{X_0}(2,\mc{L}_0), U_0(\mc{L}_0))\\
  \dTo^{\cong}&\circlearrowleft&\dTo^{\theta^*}&\circlearrowleft&\dTo^{(\theta')^*}&\circlearrowleft&\dTo^{\cong}\\
  H^i(\mc{G}_{X_0}(2,\mc{L}_0),\mc{G}_1)&\rTo&H^i(\mc{G}_{X_0}(2,\mc{L}_0),\mb{Q})&\rTo^{r^*}&H^i(\mc{G}_1,\mb{Q})&\rTo&H^{i+1}(\mc{G}_{X_0}(2,\mc{L}_0),\mc{G}_1)
    \end{diagram}
    \end{equation}}
Using the identification \eqref{ntor05} and $U_0(\mc{L}_0) \cong M_{\widetilde{X}_0}(2,\widetilde{\mc{L}}_0)$, 
we have the following short exact sequence of pure Hodge structures:
\[0 \to H^i(U_0(\mc{L}_0),\mb{Q}) \xrightarrow{(\theta')^*} H^i(\mc{G}_1,\mb{Q}) \to \bigoplus\limits_{j=1}^3 H^{i-2j}(M_{\widetilde{X}_0}(2,\widetilde{\mc{L}}_0), \mb{Q})(\xi'')^j \to 0.\]
  Using the identifications \eqref{ntor04} and \eqref{ntor05} observe that the image of the composition 
  \[H^{i-2}(\mc{G}_0 \cap \mc{G}_1,\mb{Q})(-1) \xrightarrow{f_i} \mr{Gr}^W_i H^i(\mc{G}_{X_0}(2, \mc{L}_0),\mb{Q}) \xrightarrow{r^*} H^i(\mc{G}_1,\mc{Q})\]
  is isomorphic to $\bigoplus\limits_{j=1}^3 H^{i-2j}(M_{\widetilde{X}_0}(2,\widetilde{\mc{L}}_0), \mb{Q})(\xi'')^j$ with 
  kernel $H^{i-4}(M_{\widetilde{X}_0}(2,\widetilde{\mc{L}}_0),\mb{Q})(-2)$ (see proof of Theorem \ref{bet01}). This implies that the natural morphism 
  \[H^{i-2}(\mc{G}_0 \cap \mc{G}_1, \mb{Q}) \xrightarrow{r^* \circ f_i} H^i(\mc{G}_1, \mb{Q}) \to \mr{coker}((\theta')^*)\]
  is surjective. Since this morphism factors through $\mr{coker}(\theta^*)$ (use the diagram \eqref{eq:bet04}), we conclude that the natural morphism from $\mr{coker}(\theta^*)$ to 
  $\mr{coker}((\theta')^*)$ is a surjective. By diagram chase of \eqref{eq:bet04}, we have the natural morphism from $\mr{coker}(\theta^*)$
 to $\mr{coker}((\theta')^*)$ is injective, hence an isomorphism. 
  Using the diagram \eqref{eq:bet04} and the analogous diagram after replacing $i$ by $i-1$, one can observe by diagram chase that $\theta^*$ is injective.
 The proposition then follows immediately.
 \end{proof}
 
 \begin{thm}\label{th:simppo}
 Let $P_i^\infty$ and $I_k^\infty$ be as in \S \ref{sec4}. Then, 
 \[H^*(U_{X_0}(2,\mc{L}_0),\mb{Q}) \cong  \bigoplus\limits_i P_i^\infty \otimes \frac{\mb{Q}[\alpha_\infty, \beta_\infty, \psi_\infty]}{I^\infty_{g-i}}.\]
 Moreover, the Hodge-Poincar\'{e} polynomial associated to the moduli space $U_{X_0}(2,\mc{L}_0)$ is
  \[\frac{(1+xy^2)^{g-1}(1+x^2y)^{g-1}(1+xy+x^3y^3)-x^gy^g(1+x)^{g-1}(1+y)^{g-1}(2+xy)}{(1-xy)(1-x^2y^2)}.\]
 \end{thm}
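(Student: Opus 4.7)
The plan is to use the proper morphism $\theta : \mc{G}_{X_0}(2,\mc{L}_0) \to U_{X_0}(2,\mc{L}_0)$ of \eqref{eq:bet11} together with Proposition \ref{prop:bet02} to identify $H^*(U_{X_0}(2,\mc{L}_0),\mb{Q})$ with the first summand of the decomposition in Theorem \ref{th:bet1}. Since $\theta^*$ is an injective graded ring homomorphism, its image is a graded subring of $H^*(\mc{G}_{X_0}(2,\mc{L}_0),\mb{Q})$ isomorphic to $H^*(U_{X_0}(2,\mc{L}_0),\mb{Q})$, with cokernel in cohomological degree $i$ given by $\bigoplus_{j=1}^3 H^{i-2j}(M_{\widetilde{X}_0}(2,\widetilde{\mc{L}}_0),\mb{Q})(\xi'')^j$, concentrated in $\mr{Gr}^W_i$ and factoring through the restriction $r^*$ to $\mc{G}_1$.

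Next, I would match $\Ima(\theta^*)$ with the subring $A := \bigoplus_i P_i^\infty \otimes \mb{Q}[\alpha_\infty,\beta_\infty,\psi_\infty]/I^\infty_{g-i}$ of $H^*(\mc{G}_{X_0}(2,\mc{L}_0),\mb{Q})$. From the proof of Theorem \ref{th:bet1} (equation \eqref{eq:bet10}) together with Corollary \ref{ntor03}, $A$ is precisely $\bigoplus_i \Ima(\mr{sp}_i)$, the image of the specialization morphism. Hence it suffices to show that the composite $\mr{sp}_\bullet \circ \theta^*$ is injective, or equivalently $\Ima(\theta^*) \cap \ker(\mr{sp}_\bullet) = 0$. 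Since $\theta^*$ is an isomorphism on $\mr{Gr}^W_{i-1}$ and $\ker(\mr{sp}_i)$ lies in $\mr{Gr}^W_i H^i(\mc{G}_{X_0}(2,\mc{L}_0),\mb{Q})$ (the image of the Gysin map $f_i$ is pure of weight $i$), the potential intersection is confined to $\mr{Gr}^W_i$. There, both $\ker(\mr{sp}_i)$ and $\mr{coker}(\theta^*)$ are canonically identified with the $\xi''$-positive-degree part $\bigoplus_{j=1}^3 H^{i-2j}(M_{\widetilde{X}_0}(2,\widetilde{\mc{L}}_0),\mb{Q})(\xi'')^j$ of $H^i(\mc{G}_1,\mb{Q})$ --- the former via the Gysin morphism $i_{2,*}$ and the Leray-Hirsch basis of \eqref{ntor05}, and the latter by Proposition \ref{prop:bet02} --- and the composite $\ker(\mr{sp}_i) \hookrightarrow \mr{Gr}^W_i H^i(\mc{G}_{X_0}(2,\mc{L}_0),\mb{Q}) \twoheadrightarrow \mr{coker}(\theta^*)$ becomes the identity under these identifications, hence an isomorphism. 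A dimension count then forces $\Ima(\theta^*) \cap \ker(\mr{sp}_i) = 0$, giving the first assertion of the theorem.

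For the Hodge-Poincar\'{e} polynomial, since $\xi''$ has Hodge type $(1,1)$, the cokernel of $\theta^*$ contributes $xy(1 + xy + x^2y^2)\,Q(x,y)$ to the Hodge-Poincar\'{e} polynomial, where $Q(x,y) = P_{g-1}(x,y)$ is the Hodge-Poincar\'{e} polynomial of $M_{\widetilde{X}_0}(2,\widetilde{\mc{L}}_0)$ as recalled in \eqref{eq:bet06}. Subtracting this contribution from the formula of Theorem \ref{th:giespo} and using the intermediate decomposition $P_{\mc{G}}(x,y) = P_g(x,y) + Q(x,y)(-x^2y - xy^2 + 2xy + x^2y^2 + x^3y^3)$ obtained in its proof, I would get
\[
P_U(x,y) = P_g(x,y) + xy(1-x-y)\,Q(x,y).
\]
Combining the two fractions over the common denominator $(1-xy)(1-x^2y^2)$ and using the algebraic identities $(1+xy^2)(1+x^2y) + xy(1-x-y) = 1 + xy + x^3y^3$ and $(1+x)(1+y) + (1-x-y) = 2 + xy$ then yields the closed-form polynomial stated in the theorem.

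The hard part will be the second paragraph: showing that $\Ima(\theta^*)$ meets $\ker(\mr{sp}_\bullet)$ only in zero. While a naive dimension count pairs off these two subspaces, the genuine content is the canonical identification of both $\ker(\mr{sp}_i)$ and $\mr{coker}(\theta^*)$ with the same explicit piece of $H^i(\mc{G}_1,\mb{Q})$ arising from the Gysin pushforward of the Segre embedding $\mb{P}^1 \times \mb{P}^1 \hookrightarrow \mb{P}^3$ appearing in Proposition \ref{ner15}, so that the composition $\ker(\mr{sp}_i) \to \mr{coker}(\theta^*)$ really is an isomorphism rather than merely a map between spaces of equal dimension.
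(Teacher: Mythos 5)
Your proposal is correct and follows essentially the same route as the paper: both arguments hinge on showing that the composite $\ker(\mr{sp}_i)\hookrightarrow H^i(\mc{G}_{X_0}(2,\mc{L}_0),\mb{Q})\twoheadrightarrow \mr{coker}(\theta^*)$ is an isomorphism by identifying both sides with $\bigoplus_{j=1}^{3}H^{i-2j}(M_{\widetilde{X}_0}(2,\widetilde{\mc{L}}_0),\mb{Q})(\xi'')^j$ via $r^*$ and the Gysin pushforward of the Segre embedding $\mb{P}^1\times\mb{P}^1\hookrightarrow\mb{P}^3$, whence $H^i(U_{X_0}(2,\mc{L}_0),\mb{Q})\cong W_iH^i(\mc{G}(2,\mc{L})_\infty,\mb{Q})$ and the ring structure follows from \eqref{eq:bet10}. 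Your Hodge--Poincar\'e computation, $P_U=P_{\mc{G}}-xy(1+xy+x^2y^2)Q=P_g+xy(1-x-y)Q$, is just a repackaging of the paper's $P_g+Q(-x^2y-xy^2+xy)$ and yields the same closed form.
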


 \begin{proof}
 Observe that by Corollary \ref{ntor03} the image of the composition 
 {\small \begin{equation}\label{eq:fin01}
  \Ima(f_i) = \ker(\mr{sp}_i) \hookrightarrow H^i(\mc{G}_{X_0}(2,\mc{L}_0),\mb{Q}) \xrightarrow{r^*} H^i(\mc{G}_1,\mb{Q}) \xrightarrow[\sim]{\eqref{ntor05}}\bigoplus\limits_{j=0}^3 H^{i-2j}(M_{\widetilde{X}_0}(2,\widetilde{\mc{L}}_0), \mb{Q})(\xi'')^j
 \end{equation}}
coincides with the image of the Gysin morphism from 
$H^{i-2}(\mc{G}_0 \cap \mc{G}_1,\mb{Q})$ to $H^i(\mc{G}_1,\mb{Q})$.
Under the notations of \S \ref{note:ner01}, the Gysin morphism 
from $H^*(\mb{P}^1 \times \mb{P}^1, \mb{Q})$ to 
$H^{*+2}(\p3, \mb{Q})$ sends $1$ (resp. $\xi_1+\xi_2, \xi_1\xi_2$)
to $\xi''$ (resp. $(\xi'')^2, (\xi'')^3$), up to multiplication by 
a non-zero rational number. Using the identification 
\eqref{ntor04}, this implies that the image of \eqref{eq:fin01} is isomorphic to \[\bigoplus\limits_{j = 1}^3 H^{i-2j}(M_{\widetilde{X}_0}(2,\widetilde{\mc{L}}_0), \mb{Q})(\xi'')^j.\]
   Proposition \ref{prop:bet02} then implies that the morphism 
 {\small \[\ker(\mr{sp}_i: H^i(\mc{G}_{X_0}(2,\mc{L}_0), \mb{Q}) \to H^i(\mc{G}(2,\mc{L})_\infty, \mb{Q})) \to \mr{coker}(\theta^*: H^i(U_{X_0}(2,\mc{L}_0), \mb{Q}) \to H^i(\mc{G}_{X_0}(2,\mc{L}_0), \mb{Q}))\]}
 induced by $\ker(\mr{sp}_i) \hookrightarrow H^i(\mc{G}_{X_0}(2,\mc{L}_0), \mb{Q}) \to H^i(\mc{G}_{X_0}(2,\mc{L}_0), \mb{Q})/\Ima(\theta^*)=
 \mr{coker}(\theta^*)$,  is an isomorphism of pure Hodge structures.
  Therefore, by Corollary \ref{ntor03} we have 
  \begin{equation}\label{eq:bet12}
   H^i(U_{X_0}(2,\mc{L}_0),\mb{Q}) \cong W_i H^i(\mc{G}(2,\mc{L})_\infty, \mb{Q}) \mbox{ as mixed Hodge structures}
 \end{equation}
 ($H^i(U_{X_0}(2,\mc{L}_0),\mb{Q})$ 
 has weights at most $i$ as $U_{X_0}(2,\mc{L}_0)$ is compact). The first part of the theorem
 then follows directly from \eqref{eq:bet10}. 
 
 Note that the $(p,q)$-th part of the 
cohomology ring $H^*(U_{X_0}(2,\mc{L}_0), \mb{C})$ is given by
\[h^{p,q}(U_{X_0}(2,\mc{L}_0), \mb{C}):=h^{p,q}\mr{Gr}^W_{p+q}H^{p+q}(U_{X_0}(2,\mc{L}_0), \mb{C}) + h^{p,q}\mr{Gr}^W_{p+q}H^{p+q+1}(U_{X_0}(2,\mc{L}_0), \mb{C}).\]
In the proof of Theorem \ref{th:giespo} we constructed an isomorphism of pure Hodge structures 
\[\Phi_{p+q+1}: \mr{Gr}^W_{p+q} H^{p+q+1}(\mc{G}(2,\mc{L})_\infty, \mb{C}) \to H^{p+q-2}(M_{\widetilde{X}_0}(2,\widetilde{\mc{L}}_0), \mb{C})\]
sending 
a class of type $(p,q)$ to that of type $(p-1,q-1)$. 
Then, Proposition 
\ref{prop:bet05} along with \eqref{eq:bet12} implies that 
 \[h^{p,q}(U_{X_0}(2,\mc{L}_0), \mb{C})=h^{p,q}(\mc{G}(2,\mc{L})_s, \mb{C})-
 h^{p-2,q-1}(M_{\widetilde{X}_0}(2,\widetilde{\mc{L}}_0), \mb{C})-h^{p-1,q-2}(M_{\widetilde{X}_0}(2,\widetilde{\mc{L}}_0), \mb{C})+\]\[+h^{p-1,q-1}(M_{\widetilde{X}_0}(2,\widetilde{\mc{L}}_0), \mb{C}).
\]
Let $P_g(x,y)$ and $Q(x,y)$ be the Hodge-Poincar\'{e} polynomial $\mc{G}(2,\mc{L})_s$ and $M_{\widetilde{X}_0}(2,\widetilde{\mc{L}}_0)$ respectively, defined in \eqref{eq:bet06}. Then, the Hodge-Poincar\'{e} polynomial of $U_{X_0}(2,\mc{L}_0)$ is given by 
\[P_g(x,y)+Q(x,y)(-x^2y-xy^2+xy)=\]
\[=\frac{(1+xy^2)^{g-1}(1+x^2y)^{g-1}(1+xy+x^3y^3)-x^gy^g(1+x)^{g-1}(1+y)^{g-1}(2+xy)}{(1-xy)(1-x^2y^2)}.\]
This proves the theorem.
 \end{proof}

 \begin{rem}\label{rem:gen}
  Under the identification \eqref{eq:bet12} above, the cohomology ring $H^*(U_{X_0}(2,\mc{L}_0), \mb{Q})$
  is generated by $\alpha_\infty, \beta_\infty$, $\psi_i^\infty$ for $1 \le i \le 2g-1$ and $\psi_g^\infty\psi_{2g}^\infty$.
 \end{rem}


\begin{thebibliography}{10}

\bibitem{bake}
M.~Baker.
\newblock Specialization of linear systems from curves to graphs.
\newblock {\em Algebra \& Number Theory}, 2(6):613--653, 2008.

\bibitem{bar}
V.~Baranovsky.
\newblock Cohomology ring of the moduli space of stable vector bundles with odd
  determinant.
\newblock {\em Izv. Russ. Acad. Nauk}, 58(4):204--210, 1994.

\bibitem{indpre}
S.~Basu, A.~Dan, and I.~Kaur.
\newblock Degeneration of intermediate {J}acobians and the {T}orelli theorem.
\newblock {\em Documenta Mathematica}, 24:1739--1767, 2019.

\bibitem{genpre}
S.~Basu, A.~Dan, and I.~Kaur.
\newblock Generators of the cohomology ring, after {N}ewstead.
\newblock {\em arXiv preprint arXiv:1908.01330}, 2019.

\bibitem{bhos2}
U.~N. Bhosle.
\newblock Generalised parabolic bundles and applications to torsionfree sheaves
  on nodal curves.
\newblock {\em Arkiv f{\"o}r Matematik}, 30(1):187--215, 1992.

\bibitem{Bhosle}
U.~N. Bhosle.
\newblock Seminormal varieties, torsionfree sheaves, and {P}icard groups.
\newblock {\em Communications in Algebra}, 36(3):821--841, 2008.

\bibitem{BGL}
E.~Bifet, F.~Ghione, and M.~Letizia.
\newblock On the {A}bel-{J}acobi map for divisors of higher rank on a curve.
\newblock {\em Mathematische Annalen}, 299(1):641--672, 1994.

\bibitem{carl}
J.~Carlson, S.~M{\"u}ller-Stach, and C.~Peters.
\newblock {\em Period mappings and period domains}, volume 168.
\newblock Cambridge University Press, 2017.

\bibitem{DK2}
A~Dan and I.~Kaur.
\newblock N{\'e}ron models of intermediate jacobians associated to moduli
  spaces.
\newblock {\em Revista Matem{\'a}tica Complutense},
  DOI-10.1007/s13163-019-00333-y:1--26, 2019.

\bibitem{bano}
S.~del Ba{\~n}o.
\newblock On the motive of moduli spaces of rank two vector bundles over a
  curve.
\newblock {\em Compositio Mathematica}, 131(1):1--30, 2002.

\bibitem{delibla}
P.~Deligne.
\newblock Th{\'e}oreme de {L}efschetz et criteres de d{\'e}g{\'e}n{\'e}rescence
  de suites spectrales.
\newblock {\em Publications Math{\'e}matiques de l'Institut des Hautes
  {\'E}tudes Scientifiques}, 35(1):107--126, 1968.

\bibitem{deli2}
P.~Deligne.
\newblock {\em {\'E}quations diff{\'e}rentielles {\`a} points singuliers
  r{\'e}guliers}, volume 163.
\newblock Springer, 2006.

\bibitem{DR}
U.~V. Desale and S.~Ramanan.
\newblock Poincar{\'e} polynomials of the variety of stable bundles.
\newblock {\em Mathematische Annalen}, 216(3):233--244, 1975.

\bibitem{EK}
R.~Earl and F.~Kirwan.
\newblock The {H}odge numbers of the moduli spaces of vector bundles over a
  {R}iemann surface.
\newblock {\em The Quarterly Journal of Mathematics}, 51(4):465--483, 2000.

\bibitem{EaK}
R.~Earl and F.~Kirwan.
\newblock Complete sets of relations in the cohomology rings of moduli spaces
  of holomorphic bundles and parabolic bundles over a riemann surface.
\newblock {\em Proceedings of the London Mathematical Society}, 89(3):570--622,
  2004.

\bibitem{gies}
D.~Gieseker.
\newblock A degeneration of the moduli space of stable bundles.
\newblock {\em Journal of Differential Geometry}, 19(1):173--206, 1984.

\bibitem{HN}
G.~Harder and M.~S. Narasimhan.
\newblock On the cohomology groups of moduli spaces of vector bundles on
  curves.
\newblock {\em Math. Ann}, 212:215--248, 1975.

\bibitem{HS}
R.~Herrera and S~Salamon.
\newblock Intersection numbers on moduli spaces and symmetries of a {V}erlinde
  formula.
\newblock {\em Communications in mathematical physics}, 188(3):521--534, 1997.

\bibitem{huy}
D.~Huybrechts and M.~Lehn.
\newblock {\em The geometry of moduli spaces of sheaves}.
\newblock Springer, 2010.

\bibitem{ind}
I.~Kaur.
\newblock {\em The ${C_1}$ conjecture for the moduli space of stable vector
  bundles with fixed determinant on a smooth projective curve}.
\newblock Ph.{D.} thesis, Freie University Berlin, 2016.

\bibitem{ink3}
I.~Kaur.
\newblock Smoothness of moduli space of stable torsionfree sheaves with fixed
  determinant in mixed characteristic.
\newblock {\em Analytic and Algebraic Geometry}, pages 173--186, 2017.

\bibitem{K4}
I.~Kaur.
\newblock Existence of semistable vector bundles with fixed determinants.
\newblock {\em Journal of Geometry and Physics}, 138:90--102, 2019.

\bibitem{kingn}
A.~D. King and P.~E. Newstead.
\newblock On the cohomology ring of the moduli space of rank 2 vector bundles
  on a curve.
\newblock {\em Topology}, 37(2):407--418, 1998.

\bibitem{kirw}
F.~Kirwan.
\newblock The cohomology rings of moduli spaces of bundles over {R}iemann
  surfaces.
\newblock {\em Journal of the American Mathematical Society}, 5(4):853--906,
  1992.

\bibitem{kuli}
V.~S. Kulikov.
\newblock {\em Mixed Hodge structures and singularities}, volume 132.
\newblock Cambridge University Press, 1998.

\bibitem{mumn}
D.~Mumford and P.~Newstead.
\newblock Periods of a moduli space of bundles on curves.
\newblock {\em American Journal of Mathematics}, 90(4):1200--1208, 1968.

\bibitem{newt}
P.~E. Newstead.
\newblock Topological properties of some spaces of stable bundles.
\newblock {\em Topology}, 6(2):241--262, 1967.

\bibitem{new1}
P.~E. Newstead.
\newblock Characteristic classes of stable bundles of rank 2 over an algebraic
  curve.
\newblock {\em Transactions of the American Mathematical Society},
  169:337--345, 1972.

\bibitem{pan}
R.~Pandharipande.
\newblock A compactification over {$M_g$} of the {U}niversal {M}oduli {S}pace
  of {S}lope-{S}emistable vector bundles.
\newblock {\em Journal of the American Mathematical Society}, 9(2):425--471,
  1996.

\bibitem{pet}
C.~Peters and J.~H.~M. Steenbrink.
\newblock {\em Mixed {H}odge structures}, volume~52.
\newblock Springer Science \& Business Media, 2008.

\bibitem{pezz}
G.~Pezzini.
\newblock Lectures on spherical and wonderful varieties.
\newblock {\em Les cours du CIRM}, 1(1):33--53, 2010.

\bibitem{schvar}
W.~Schmid.
\newblock Variation of {H}odge structure: the singularities of the period
  mapping.
\newblock {\em Inventiones mathematicae}, 22(3-4):211--319, 1973.

\bibitem{sesh}
C.~S. Seshadri.
\newblock {\em Fibr{\'e}s vectoriels sur les courbes alg{\'e}briques:
  conf{\'e}rences {\`a} l'ENS, Juin 1980}.
\newblock Number 95-96. Soci{\'e}t{\'e} Math{\'e}matique de France, 1982.

\bibitem{sesh4}
C.~S. Seshadri.
\newblock Degenerations of the moduli spaces of vector bundles on curves.
\newblock {\em ICTP Lecture Notes}, 1, 2000.

\bibitem{sieb}
B.~Siebert and G.~Tian.
\newblock Recursive relations for the cohomology ring of moduli spaces of
  stable bundles.
\newblock {\em Turkish J. Math}, 19(2):131--144, 1995.

\bibitem{ste1}
J.~Steenbrink.
\newblock Limits of {H}odge structures.
\newblock {\em Inventiones mathematicae}, 31:229--257, 1976.

\bibitem{sun1}
X.~Sun.
\newblock Moduli spaces of {SL}(r)-bundles on singular irreducible curves.
\newblock {\em Asia Journal of Mathematics}, 7(4):609--625, 2003.

\bibitem{tha}
M.~Thaddeus.
\newblock {\em Algebraic geometry and the Verlinde formula}.
\newblock PhD thesis, University of Oxford, 1992.

\bibitem{v4}
C.~Voisin.
\newblock {\em {H}odge Theory and Complex Algebraic Geometry-I}.
\newblock Cambridge studies in advanced mathematics-76. Cambridge University
  press, 2002.

\bibitem{zag}
D.~Zagier.
\newblock On the cohomology of moduli spaces of rank two vector bundles over
  curves.
\newblock In {\em The moduli space of curves}, pages 533--563. Springer, 1995.

\end{thebibliography}
\end{document}